\title{On Diophantine exponents and \\ Khintchine's transference principle.
                              \thanks{ This research was  supported by
                              RFBR (grant $\textup N^{\circ}$ 09--01--00371a) and
                              the grant of the President of Russian Federation
                              $\textup N^\circ$ MK--1226.2010.1.
                             }}
\author{Oleg\,N.\,German}
\date{}
\theoremstyle{definition}
\newtheorem{definition}{Definition}
\theoremstyle{remark}
\theoremstyle{plain}
\newtheorem{theorem}{Theorem}
\newtheorem{lemma}{Lemma}
\newtheorem{proposition}{Proposition}
\newtheorem{corollary}{Corollary}
\newtheorem{classic}{Theorem}
\newtheorem{classicprime}[classic]{Theorem}
\newenvironment{classic_prime}
               {\begin{classicprime}}
               {\end{classicprime}}
\DeclareMathOperator{\vol}{vol}
\DeclareMathOperator{\spanned}{span}
\renewcommand{\vec}[1]{\mathbf{#1}}
\renewcommand{\geq}{\geqslant}
\renewcommand{\leq}{\leqslant}
\renewcommand{\phi}{\varphi}
\newcommand{\e}{\varepsilon}
\newcommand{\R}{\mathbb{R}}
\newcommand{\Z}{\mathbb{Z}}
\newcommand{\La}{\Lambda}
\newcommand{\cL}{\mathcal{L}}
\newcommand{\cS}{\mathcal{S}}
\newcommand{\cB}{\mathcal{B}}
\newcommand{\Gl}{\textup{GL}}
\newcommand{\tr}[1]{{#1}^\intercal}
\begin{document}

  \maketitle

  \begin{abstract}
     In this paper we improve estimates of Jarn\'{\i}k and Apfelbeck for uniform Diophantine exponents of transposed systems of linear forms and generalize to the case of an arbitrary system the estimates of Laurent and Bugeaud for individual exponents. The method proposed also gives a better constant in Mahler's transference theorem.
  \end{abstract}

  \section{History and main results} \label{sec:intro}

  Consider a system of linear equations
  \begin{equation} \label{eq:the_system}
    \Theta\vec x=\vec y
  \end{equation}
  with $\vec x\in\R^m$, $\vec y\in\R^n$ and
  \[ \Theta=
     \begin{pmatrix}
       \theta_{11} & \cdots & \theta_{1m} \\
       \vdots & \ddots & \vdots \\
       \theta_{n1} & \cdots & \theta_{nm}
     \end{pmatrix},\qquad
     \theta_{ij}\in\R. \]
  Let us denote by $\tr\Theta$ the transposed matrix and consider the corresponding ``transposed'' system
  \begin{equation} \label{eq:the_transposed_system}
    \tr\Theta\vec y=\vec x,
  \end{equation}
  where, as before, $\vec x\in\R^m$ and $\vec y\in\R^n$. Integer approximations to the solutions of the systems \eqref{eq:the_system} and \eqref{eq:the_transposed_system} are closely connected, which is reflected in a large variety of so called \emph{transference theorems}. Most of them deal with the corresponding asymptotics in terms of Diophantine exponents.

  \begin{definition} \label{def:beta}
    The supremum of real numbers $\gamma$, such that there are infinitely many $\vec x\in\Z^m$, $\vec y\in\Z^n$ satisfying the inequality
    \[ |\Theta\vec x-\vec y|_\infty\leq|\vec x|_\infty^{-\gamma}, \]
    where $|\cdot|_\infty$ denotes the sup-norm in the corresponding space,
    is called the \emph{individual Diophantine exponent} of $\Theta$ and is denoted by $\beta(\Theta)$.
  \end{definition}

  \begin{definition} \label{def:alpha}
    The supremum of real numbers $\gamma$, such that for each $t$ large enough there are $\vec x\in\Z^m$, $\vec y\in\Z^n$ satisfying the inequalities
    \begin{equation*} 
      0<|\vec x|_\infty\leq t,\qquad|\Theta\vec x-\vec y|_\infty\leq t^{-\gamma},
    \end{equation*}
    is called the \emph{uniform Diophantine exponent} of $\Theta$ and is denoted by $\alpha(\Theta)$.
  \end{definition}
  
  The aim of the current paper is to prove some new inequalities connecting the quantities $\alpha(\Theta)$, $\alpha(\tr\Theta)$, $\beta(\Theta)$, $\beta(\tr\Theta)$, which generalize or refine the existing ones. We do not go into too much details in the history of the question and refer the interested reader to wonderful recent surveys by Waldschmidt \cite{waldschmidt} and Moshchevitin \cite{moshchevitin}.

  \subsection{Uniform exponents}

  Our first objective is to investigate the relation between $\alpha(\Theta)$ and $\alpha(\tr\Theta)$. When $n=m=1$ the numbers $\alpha(\Theta)$ and $\alpha(\tr\Theta)$ obviously coincide (and are actually equal to $1$, see \cite{jarnik_tiflis}). In the case $n=1$, $m=2$ they also determine one another. Jarn\'{\i}k \cite{jarnik_tiflis} proved the following remarkable

  \begin{classic} \label{t:jarnik_equality}
    If $n=1$, $m=2$, then
    \begin{equation} \label{eq:jarnik_equality}
      \alpha(\Theta)^{-1}+\alpha(\tr\Theta)=1.
    \end{equation}
  \end{classic}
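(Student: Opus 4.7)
I would prove Jarn\'ik's identity by establishing the two opposite inequalities $\alpha(\tr\Theta) \ge 1 - 1/\alpha(\Theta)$ and $\alpha(\tr\Theta) \le 1 - 1/\alpha(\Theta)$ separately, via the geometry of numbers in $\R^3$ applied to the unimodular lattice
\[
 \Lambda = \{(x_1, x_2,\, y - \theta_1 x_1 - \theta_2 x_2) : (x_1, x_2, y) \in \Z^3\}
\]
and its polar $\Lambda^*$. Set $L(\vec x) = \theta_1 x_1 + \theta_2 x_2$. The two uniform exponents are the critical rates at which the parallelepipeds $\mathcal{P}(t,u) = [-t,t]^2 \times [-u,u]$ and $\mathcal{Q}(T,v) = [-v,v]^2 \times [-T,T]$ stop containing nonzero vectors of $\Lambda$ and $\Lambda^*$, respectively.

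\paragraph{Step 1 ($\alpha(\tr\Theta)\ge 1-1/\alpha(\Theta)$, direct construction).}
Let $\vec x^{(1)}, \vec x^{(2)},\dots \in \Z^2$ be the sequence of best approximation vectors to the linear form $L$, with strictly increasing heights $X_k = |\vec x^{(k)}|_\infty$ and strictly decreasing errors $\epsilon_k = \|L(\vec x^{(k)})\|$. The uniform hypothesis yields $\epsilon_k \le X_{k+1}^{-\alpha(\Theta)+o(1)}$ (take the approximation parameter just below the next jump). Consecutive vectors $\vec x^{(k)}, \vec x^{(k+1)}$ are $\Z$-linearly independent in $\Z^2$, so $D_k := \det(\vec x^{(k)}, \vec x^{(k+1)})$ is a nonzero integer with $|D_k| \le 2X_kX_{k+1}$. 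Writing $L(\vec x^{(j)}) = p_j + \tilde\epsilon_j$ with $p_j \in \Z$ and $|\tilde\epsilon_j| \le \epsilon_j$, Cramer's rule applied to the $2\times 2$ system for $(\theta_1, \theta_2)$ produces integers $q_i^{(k)}$ with
\[
 \max_{i=1,2} \bigl|D_k\theta_i - q_i^{(k)}\bigr| \le X_{k+1}\epsilon_k + X_k\epsilon_{k+1} \le 2\, X_{k+1}^{1 - \alpha(\Theta) + o(1)}.
\]
Thus $D_k$ is a simultaneous approximation denominator for $(\theta_1, \theta_2)$. For each large $T$ I would select the maximal $k$ with $|D_k| \le T$; comparing the error bound to $T^{-(1 - 1/\alpha(\Theta))}$ and letting $\e \to 0$ gives the inequality.

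\paragraph{Step 2 ($\alpha(\tr\Theta)\le 1-1/\alpha(\Theta)$, Mahler transference).}
The polar of $\mathcal{P}(t, t^{-\alpha(\Theta)})$ is, up to a bounded factor, the dual box $\mathcal{Q}(t^{\alpha(\Theta)}, 1/t)$ attached to $\Lambda^*$. Mahler's inequality $\lambda_j(\Lambda, K) \cdot \lambda_{4-j}(\Lambda^*, K^\circ) \asymp 1$ together with the uniform hypothesis $\lambda_1(\Lambda, \mathcal{P}(t, t^{-\alpha(\Theta)+o(1)})) \le 1$ forces $\lambda_3(\Lambda^*, \mathcal{Q}(t^{\alpha(\Theta)}, 1/t)) \lesssim 1$. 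Combining this with Minkowski's second theorem $\lambda_1\lambda_2\lambda_3 \cdot \vol(\mathcal{Q}) \asymp 1$ and the auxiliary bound $\lambda_2(\Lambda^*, \mathcal{Q}) \asymp \lambda_2(\Lambda, \mathcal{P})$ (also transferable via Mahler), one extracts that $\lambda_1(\Lambda^*, \mathcal{Q}(T, T^{-(1-1/\alpha(\Theta))-\e})) > 1$ for every $\e > 0$ and all sufficiently large $T$, which is the desired upper bound on $\alpha(\tr\Theta)$.

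\paragraph{Main obstacle.}
The principal technical difficulty lies in Step 2, and it is twofold. First, Mahler's inequality only relates $\lambda_1$ directly to $\lambda_3$ in the polar lattice; controlling $\lambda_1$ in the dual requires that the intermediate successive minimum $\lambda_2$ of the original body also be under control. Second, the identity concerns \emph{uniform} rather than individual exponents, so one must track the minima for \emph{every} parameter $t$, not merely infinitely often. Both issues are resolved by the special feature of the low dimension $m + n = 3$: as $t$ varies, the sequence of best approximation vectors supplies two $\Z$-linearly independent short lattice vectors of $\Lambda$ at each scale (namely $\vec x^{(k)}, \vec x^{(k+1)}$ with heights straddling the relevant level), which together with the integrality constraint $|\det(\vec X^{(k)}, \vec X^{(k+1)}, \vec X^{(k+2)})|\ge 1$ on three consecutive lifted vectors $\vec X^{(j)} = (\vec x^{(j)}, p_j) \in \Z^3$ pins down the ratio $X_{k+2}/X_{k+1}$ tightly enough to close the transference chain. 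After this bookkeeping, letting $\e \to 0$ in both steps produces the exact equality.
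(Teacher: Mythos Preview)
The paper derives Theorem~A as an immediate corollary of its Theorem~3 (the $n=1$ case of Theorem~1): with $m=2$, inequalities~(10) and~(11) become $\alpha(\tr\Theta)\ge 1-\alpha(\Theta)^{-1}$ and $\alpha(\Theta)^{-1}\le 1-\alpha(\tr\Theta)$, which together give the equality. Theorem~1 itself is proved via the paper's section-dual machinery (Lemmas~4--6): from the uniform approximability of $\Theta$, Lemma~6 locates two non-collinear integer points in suitable boxes $M_{\cdot,\cdot}$, and Lemma~5 (via Vaaler's theorem and Proposition~3) then produces an integer point in the dual box $\widehat M_{h,r}$. Both inequalities flow from the \emph{same} construction, applied once to $\Theta$ and once to $\tr\Theta$.

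Your approach is different and closer to Jarn\'{\i}k's original, but as written it has real gaps.

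\textbf{Step 1.} The selection ``maximal $k$ with $|D_k|\le T$'' does not by itself force $X_{k+1}\gtrsim T^{1/\alpha}$, which is what you need in order to conclude that the Cramer error $2X_{k+1}^{1-\alpha+o(1)}$ is at most $T^{-(1-1/\alpha)}$. Concretely, if the best-approximation heights satisfy $X_{j+1}\asymp X_j^{\alpha}$ (perfectly compatible with $\alpha(\Theta)=\alpha$), then for $T$ just below $|D_k|\lesssim X_k^{1+\alpha}$ you are forced back to $D_{k-1}$, whose error is only $\asymp X_k^{1-\alpha}$, strictly larger than the required $T^{-(1-1/\alpha)}\asymp X_k^{-(\alpha^2-1)/\alpha}$. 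So uniformity over all $T$ is not covered by the denominators $D_k$ alone; one needs a further construction filling the gaps between consecutive $D_k$'s. This is precisely where the paper's Lemma~6 (and Jarn\'{\i}k's original argument) bring in a \emph{second} independent lattice vector at the correct scale.

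\textbf{Step 2.} The relation you invoke, $\lambda_2(\Lambda^*,\mathcal Q)\asymp\lambda_2(\Lambda,\mathcal P)$, is wrong: Mahler's inequality gives $\lambda_2(\Lambda^*,\mathcal Q)\asymp\lambda_2(\Lambda,\mathcal P)^{-1}$. More substantively, controlling $\lambda_3$ of the dual via $\lambda_1$ of the primal is not enough to push $\lambda_1$ of the dual upward without the extra determinant input you mention in your final paragraph, and that part is not worked out. A cleaner route for Step~2 would be to run the Step~1 idea with the roles of $\Theta$ and $\tr\Theta$ reversed (take best \emph{simultaneous} approximations and form cross-products in $\Z^3$ to manufacture linear-form approximants); this is exactly how the paper gets both inequalities from a single theorem. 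But then Step~2 would inherit the same uniformity gap as Step~1.
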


  Jarn\'{\i}k \cite{jarnik_tiflis} noticed that for $n=1$, $m>2$, $\alpha(\Theta)$ and $\alpha(\tr\Theta)$ are no longer related by an equality, at least he showed that in the extreme case $\alpha(\Theta)=\infty$ we can have $\alpha(\tr\Theta)$ equal to any given number in the interval $[(m-1)^{-1},1]$. However, he proved in the case $n=1$ that $\alpha(\Theta)$ and $\alpha(\tr\Theta)$ satisfy certain inequalities:

  \begin{classic} \label{t:jarnik_inequalities}
    $(i)$ We always have
    \begin{equation} \label{eq:jarnik_inequalities_a_la_khintchine}
      \frac{\alpha(\Theta)}{(m-1)\alpha(\Theta)+m}\leq\alpha(\tr\Theta)\leq\frac{\alpha(\Theta)-m+1}{m}\,.
    \end{equation}
    $(ii)$ If $\alpha(\Theta)>m(2m-3)$, then
    \[ \alpha(\tr\Theta)\geq\frac{1}{m-1}\left(1-\frac{1}{\alpha(\Theta)-2m+4}\right). \]
    $(iii)$ If $\alpha(\tr\Theta)>(m-1)/m$, then
    \[ \alpha(\Theta)\geq m-2+\frac{1}{1-\alpha(\Theta)}\,. \]
  \end{classic}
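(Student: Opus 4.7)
I would work in $\R^{m+1}$, encoding both hypotheses as lattice-point conditions in suitable parallelepipeds and then coupling them through Minkowski's theorem on successive minima. For each $t\ge1$ and positive $\gamma,\delta$ set
\[
\cB(t,\gamma)=\bigl\{(\vec u,v)\in\R^{m+1}:|\vec u|_\infty\le t,\ |\Theta\vec u-v|\le t^{-\gamma}\bigr\},
\]
\[
\cC(t,\delta)=\bigl\{(\vec x,y)\in\R^{m+1}:|\tr\Theta\, y-\vec x|_\infty\le t^{-\delta},\ |y|\le t\bigr\}.
\]
Then $\alpha(\Theta)\ge\gamma$ is equivalent to saying that $\cB(t,\gamma)$ contains a lattice point with nonzero $\vec u$ for every large $t$, and symmetrically $\alpha(\tr\Theta)\ge\delta$ corresponds to $\cC(t,\delta)$. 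A direct computation identifies the polar $\cB(t,\gamma)^\circ$, up to bounded multiplicative factors, with a rescaled copy of $\cC$, so Mahler's inequality $\lambda_j(\cB)\,\lambda_{m+2-j}(\cB^\circ)\le(m+1)!$ couples the successive minima of the two families.

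For part $(i)$ I argue directly. Given $(\vec x,y)\in\cC(t,\delta)\cap\Z^{m+1}$ I pass to the sublattice
\[
\La=\bigl\{(\vec u,v)\in\Z^{m+1}:x_1u_1+\dots+x_mu_m=yv\bigr\}.
\]
Its projection to $\Z^m$ has covolume at most $|y|\le t$, so Minkowski's theorem produces $(\vec u,v)\in\La\setminus\{0\}$ with $|\vec u|_\infty\ll T$ as soon as $T^m\gtrsim|y|$; on $\La$ one has the identity $\Theta\vec u-v=|y|^{-1}\sum_i(\theta_{1i}y-x_i)u_i$, which forces $|\Theta\vec u-v|\ll T\,t^{-\delta}/|y|$. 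Balancing $T$ against $t$ yields $\alpha(\Theta)\ge m\delta+m-1$, i.e.\ the right-hand inequality in \eqref{eq:jarnik_inequalities_a_la_khintchine}. The left-hand inequality follows from the dual construction: a linear-form approximation $(\vec u,v)$ is fed into its annihilator sublattice in $\Z^{m+1}$ and Minkowski is applied there, the worst case producing the denominator $(m-1)\alpha(\Theta)+m$.

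Parts $(ii)$ and $(iii)$ require a refinement based on a whole block of consecutive best approximations $(\vec u^{(k)},v^{(k)})$ to $\Theta$ (resp.\ $(\vec x^{(k)},y^{(k)})$ to $\tr\Theta$). The key structural claim is that when $\alpha(\Theta)>m(2m-3)$ in $(ii)$, or $\alpha(\tr\Theta)>(m-1)/m$ in $(iii)$, any $m$ such consecutive vectors are linearly independent in $\R^{m+1}$. Granting this, one assembles from the block a non-degenerate integer matrix, and Cramer's rule converts the smallness of the linear-form residues $\Theta\vec u^{(k)}-v^{(k)}$ into the smallness of the simultaneous residues $\theta_{1i}y-x_i$ for a new approximation $(\vec x,y)$, whose denominator is bounded by a product of the $|\vec u^{(k)}|_\infty$. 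Plugging in the growth rate of these norms forced by $\alpha(\Theta)$ (resp.\ $\alpha(\tr\Theta)$) yields the refined bounds as stated; in $(iii)$ the denominator ``$1-\alpha(\Theta)$'' must be read as ``$1-\alpha(\tr\Theta)$'', as is transparent from the symmetric position of the statement.

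The main obstacle is the linear-independence step in parts $(ii)$ and $(iii)$. If $m$ consecutive best approximations were allowed to collapse into a proper subspace of $\R^{m+1}$, the Cramer construction would degenerate and only the weaker part $(i)$ would survive. Ruling out such collapse amounts to a careful comparison of the sizes of the parallelepipeds $\cB(t_k,\alpha(\Theta)-\e)$ at successive best-approximation times, and this is precisely where the numerical thresholds $m(2m-3)$ and $(m-1)/m$ enter; the rest is essentially volume bookkeeping.
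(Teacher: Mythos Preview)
The paper does not prove Theorem~B directly: it is quoted as Jarn\'{\i}k's classical result, and the paper only remarks that its own Theorem~\ref{t:my_inequalities_linear_form} (the case $n=1$ of Theorem~\ref{t:my_inequalities}) implies it after some calculation. So the relevant comparison is between your sketch and the paper's proof of that stronger theorem.

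Your outline is the classical route. Part~(i) is Khintchine--Mahler transference via the annihilator sublattice; parts~(ii) and~(iii) are Jarn\'{\i}k's original idea of assembling $m$ consecutive best approximations, proving they are linearly independent once the exponent passes a threshold, and inverting by Cramer's rule. The paper explicitly describes this technique as one that ``almost neglects the uniform nature of $\alpha(\Theta)$'', which is why it only reaches Theorem~B and needs the ad~hoc thresholds $m(2m-3)$ and $(m-1)/m$. The paper's own argument is entirely different: it introduces the section-dual body $M^\wedge$ (Section~\ref{sec:section_dual}) and proves a two-point transference, Lemma~\ref{l:M_h_r_section}, to the effect that two non-collinear integer points in suitable parallelepipeds $M_{h_i,r_i}$ already force $\widehat M_{h,r}\cap\Z^d\setminus\{\vec 0\}\neq\varnothing$. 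The uniform hypothesis on $\Theta$ is then used, not to produce a block of $m$ best approximations, but only to locate \emph{two} non-collinear points $\vec v_1,\vec v_2$ with the product $\lambda_1\lambda_2$ controlled (Lemma~\ref{l:the_alphas_core}). This yields the threshold-free inequalities~\eqref{eq:my_inequalities_linear_form_1}--\eqref{eq:my_inequalities_linear_form_2} directly, and Theorem~B drops out as a weakening.

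On correctness: your sketch of~(i) is on the right track but silently needs $|y|$ comparable to $t$ for the bound $|\Theta\vec u-v|\ll Tt^{-\delta}/|y|$ to be useful, which already forces a best-approximation step you do not mention. In~(ii)--(iii) you correctly flag the linear independence of $m$ consecutive best approximations as the crux, but you do not prove it; this is exactly where the thresholds enter and is the substantive part of Jarn\'{\i}k's argument, not merely ``volume bookkeeping''. Your reading of the typo in~(iii) is correct.
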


  Theorem \ref{t:jarnik_inequalities} was later generalized by Apfelbeck \cite{apfelbeck} to the case of arbitrary $n$, $m$:

  \begin{classic} \label{t:apfelbeck_inequalities}
    $(i)$ We always have
    \begin{equation} \label{eq:apfelbeck_inequalities_a_la_dyson}
      \alpha(\tr\Theta)\geq\frac{n\alpha(\Theta)+n-1}{(m-1)\alpha(\Theta)+m}\,.
    \end{equation}
    $(ii)$ If $m>1$ and $\alpha(\Theta)>(2(m+n-1)(m+n-3)+m)/n$, then
    \[ \alpha(\tr\Theta)\geq\frac 1m
       \left(n+\frac{n(n\alpha(\Theta)-m)-2n(m+n-3)}{(m-1)(n\alpha(\Theta)-m)+m-(m-2)(m+n-3)}\right). \]
  \end{classic}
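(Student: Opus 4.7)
The plan is to use Khintchine's and Mahler's classical transference method, adapted to arbitrary $n$ and $m$. The backbone is the bilinear identity
\[
  \langle \vec x, \vec u\rangle - \langle \vec y, \vec v\rangle
   = \langle \vec x, \vec u - \tr\Theta \vec v\rangle + \langle \Theta \vec x - \vec y, \vec v\rangle,
\]
which couples primal integer approximations of $\Theta$ to polar integer approximations of $\tr\Theta$. Fix $\alpha < \alpha(\Theta)$; Definition~\ref{def:alpha} then supplies, for every sufficiently large $t$, a nonzero $(\vec x_0, \vec y_0) \in \Z^{m+n}$ with $0 < |\vec x_0|_\infty \leq t$, $|\Theta\vec x_0 - \vec y_0|_\infty \leq t^{-\alpha}$, and consequently $|\vec y_0|_\infty = O(t)$.

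For part (i) the plan is to pick a target exponent $\gamma$ and set $T := t^{(1+\alpha)/(1+\gamma)}$, so that $tT^{-\gamma} = t^{-\alpha}T$, and to consider the symmetric convex body
\[
  \cC := \bigl\{(\vec u, \vec v) \in \R^{m+n} : |\tr\Theta\vec v - \vec u|_\infty \leq T^{-\gamma},\ |\vec v|_\infty \leq T\bigr\}
\]
of volume $2^{m+n}T^{n-m\gamma}$. By the bilinear identity, every $(\vec u, \vec v) \in \cC$ satisfies $|\langle \vec x_0, \vec u\rangle - \langle \vec y_0, \vec v\rangle| \leq m\,tT^{-\gamma} + n\,t^{-\alpha}T$, a quantity that by the choice of $T$ is $O\bigl(t^{(1-\alpha\gamma)/(1+\gamma)}\bigr)$ and tends to $0$ with $t$ whenever $\alpha\gamma > 1$. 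For $t$ large, this bound is $<1$, so its integer value must vanish; every integer point of $\cC$ thus belongs to $\La := \Z^{m+n} \cap H$, where $H = \{\langle \vec x_0, \vec u\rangle = \langle \vec y_0, \vec v\rangle\}$, and the covolume of $\La$ in $H$ is at most $|(\vec x_0, -\vec y_0)|_2 = O(t)$. Minkowski's theorem on the section $\cC \cap H$, combined with the Brunn--Minkowski bound $\vol_{m+n-1}(\cC \cap H) \geq \vol_{m+n}(\cC)/W$ (where $W$ is the Euclidean width of $\cC$ normal to $H$, controlled by the same bilinear estimate), reduces after cancellation of the $|(\vec x_0, -\vec y_0)|_2$-factors to $T^{n-m\gamma} > mtT^{-\gamma} + nt^{-\alpha}T$. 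A short calculation shows that, after substituting $T = t^{(1+\alpha)/(1+\gamma)}$, this is equivalent to $(1+\alpha)(n-m\gamma) > 1-\alpha\gamma$, i.e.\ to $\gamma < (n\alpha + n - 1)/((m-1)\alpha + m)$. The resulting nonzero $(\vec u, \vec v) \in \La \cap \cC$ satisfies $\vec v \neq 0$ (else $|\vec u|_\infty \leq T^{-\gamma} < 1$ would force $\vec u = 0$), supplying the desired polar approximation at scale $T$. Letting $\alpha \uparrow \alpha(\Theta)$ and $\gamma$ approach the boundary gives part~(i).

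Part (ii), under the stronger assumption $\alpha(\Theta) > (2(m+n-1)(m+n-3)+m)/n$, calls for a refinement in which the primal body $\{|\vec x|_\infty \leq t,\ |\Theta\vec x - \vec y|_\infty \leq t^{-\alpha}\}$ is so elongated that Minkowski's second theorem on its successive minima produces an additional integer approximation $(\vec x_1, \vec y_1)$, essentially independent of $(\vec x_0, \vec y_0)$, at a carefully chosen larger scale $t^{\sigma}$. Pairing the bilinear identity against both vectors then confines the integer points of $\cC$ to a codimension-$2$ affine subspace, so Minkowski applied in dimension $m+n-2$ works for a smaller body and a larger admissible exponent $\gamma$. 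The hard part will be the bookkeeping: the scale exponent $\sigma$ and the polar parameters $T,\gamma$ must be tuned so that (a) both bilinear forms are simultaneously forced to vanish on $\cC$, (b) the two hyperplanes are genuinely independent, and (c) the Minkowski inequality survives in the reduced dimension. The dimensional factor $(m+n-1)(m+n-3)$ in the hypothesis marks precisely the threshold at which this two-constraint scheme begins to improve upon the one-constraint version, and the optimization over $\sigma$ produces the rational expression in the statement.
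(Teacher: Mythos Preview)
The paper does not prove this statement at all: Theorem~C is quoted as a classical result of Apfelbeck \cite{apfelbeck}, and the paper's only claim in its direction is the one-line remark that the new Theorem~\ref{t:my_inequalities} implies it (``it is also not difficult, though it takes some calculation\ldots''), without supplying that calculation. So there is no ``paper's own proof'' to compare against. What the paper actually proves is the stronger inequality \eqref{eq:my_inequalities_cases}, by an entirely different route: it works in $\R^{n+m}$ with the parallelepipeds $M_{h,r}$ and $\widehat M_{h,r}$, introduces the section-dual body $M^\wedge$, and combines Proposition~\ref{prop:covolumes}, Vaaler's cube-slicing bound, and Minkowski's theorem via the key Lemma~\ref{l:M_h_r_section}. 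No bilinear identity and no successive-minima argument of the kind you describe appears anywhere.

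Your sketch is, in spirit, Apfelbeck's original method rather than the paper's. For part~(i) the outline is broadly right---this is the Dyson/Khintchine transference inequality applied verbatim to the uniform exponent, and indeed the paper remarks that \eqref{eq:apfelbeck_inequalities_a_la_dyson} ``is proved with the same technique'' as \eqref{eq:dyson_transference}. For part~(ii), however, you have written a plan rather than a proof: you correctly identify that one needs a second independent integer approximation and a codimension-two restriction, but you then say ``the hard part will be the bookkeeping'' and stop. The specific constants in the statement (the $2(m+n-1)(m+n-3)$ threshold and the rational function in the conclusion) arise from a delicate optimisation that you have not carried out, and there is no indication that your parameters would reproduce them. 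As it stands, part~(ii) is an intention, not an argument.
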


  Notice that the inequalities \eqref{eq:jarnik_inequalities_a_la_khintchine} and \eqref{eq:apfelbeck_inequalities_a_la_dyson} look very much the same as the corresponding inequalities for $\beta(\Theta)$ and $\beta(\tr\Theta)$ (see Theorems \ref{t:khintchine_transference} and \ref{t:dyson_transference} below). The reason is that they are proved with the same technique, which almost neglects the ``uniform'' nature of $\alpha(\Theta)$.

  In the current paper we improve Theorems \ref{t:jarnik_inequalities} and \ref{t:apfelbeck_inequalities}. We prove

  \begin{theorem} \label{t:my_inequalities}
    For all positive integers $n$, $m$, not equal simultaneously to $1$, 
    we have
    \begin{equation} \label{eq:my_inequalities_cases}
      \alpha(\tr\Theta)\geq
      \begin{cases}
        \dfrac{n-1}{m-\alpha(\Theta)},\quad\ \ \text{ if }\ \alpha(\Theta)\leq1, \\
        \dfrac{n-\alpha(\Theta)^{-1\vphantom{\big|}}}{m-1},\quad\text{ if }\ \alpha(\Theta)\geq1.
      \end{cases}
    \end{equation}
  \end{theorem}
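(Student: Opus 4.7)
The plan is to translate the inequality into a geometry-of-numbers problem and produce a short vector of the dual lattice via an exterior-product construction, followed by a careful optimisation of scales. Let
\[
\Lambda=\{(\vec x,\Theta\vec x-\vec y):\vec x\in\Z^m,\ \vec y\in\Z^n\}\subset\R^{m+n}
\]
be the unimodular lattice associated with $\Theta$, and write $P(a,b)=[-a,a]^m\times[-b,b]^n$. The hypothesis $\alpha(\Theta)\geq\gamma$ is equivalent to $\Lambda\cap P(t,t^{-\gamma})\neq\{0\}$ for every sufficiently large $t$, while the conclusion $\alpha(\tr\Theta)\geq\gamma'$, with $\gamma'$ the right-hand side of \eqref{eq:my_inequalities_cases}, is equivalent to the statement that, for every sufficiently large $T$, the dual lattice $\Lambda^{*}$---naturally identified with the set of vectors $(\vec p+\tr\Theta\vec q,\vec q)$ with $\vec p\in\Z^m$ and $\vec q\in\Z^n$---meets $P(T^{-\gamma'},T)$ non-trivially.

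First I would extract $m+n-1$ linearly independent integer vectors $\vec a_1,\ldots,\vec a_{m+n-1}\in\Lambda$ with $\vec a_i\in P(t_i,t_i^{-\gamma})$ for a non-decreasing family of scales $t_1\leq\cdots\leq t_{m+n-1}$, to be tuned below. These are built inductively as best-approximation-type vectors: each $\vec a_{i+1}$ is a non-zero lattice point (guaranteed by the uniform hypothesis) in some $P(t,t^{-\gamma})$ which lies outside $\spanned(\vec a_1,\ldots,\vec a_i)$. Then I would form $\vec w=\vec a_1\wedge\cdots\wedge\vec a_{m+n-1}$ and identify it, via the Hodge isomorphism $\wedge^{m+n-1}\R^{m+n}\cong\R^{m+n}$ and the unimodularity of $\Lambda$, with a non-zero vector of $\Lambda^{*}$. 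Writing $\vec a_i=(\vec x_i,\vec z_i)$ with $\vec x_i\in\R^m$, $\vec z_i\in\R^n$, we have $|\vec x_i|_\infty\leq t_i$ and $|\vec z_i|_\infty\leq t_i^{-\gamma}$, and a Laplace expansion of the corresponding $(m+n-1)\times(m+n-1)$-minors controls the last $n$ coordinates of $\vec w$ (the integer vector $\vec y'$, arising from minors of the integral $\vec x_i$'s alone) and the first $m$ coordinates (the error $\tr\Theta\vec y'-\vec x'$).

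The final step is the scale optimisation, and the two branches of \eqref{eq:my_inequalities_cases} correspond to two distinct optimal configurations of the $t_i$. When $\alpha(\Theta)\leq 1$ the factors $t_i^{-\gamma}$ decrease too slowly in $t_i$ to gain much, and the optimum takes $n-1$ of the scales equal to one large value---which essentially fixes $T=|\vec y'|_\infty$---and the remaining $m$ scales at a smaller value matched so that the Laplace bound on the first $m$ coordinates of $\vec w$ is driven down to the order $T^{-(n-1)/(m-\gamma)}$. When $\alpha(\Theta)\geq 1$ one uses instead a single ``very good'' approximation, at a scale where the corresponding $\vec z$-part is of order $T^{-1}$, together with $m+n-2$ balanced remaining scales; the resulting bound on the error is of order $T^{-(n-\gamma^{-1})/(m-1)}$.

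The main obstacle is the first step: producing $m+n-1$ linearly independent integer vectors with the required distribution of scales. The classical arguments of Khintchine, Dyson and Apfelbeck avoid this difficulty via Minkowski's second theorem, but incur a loss which is exactly the gap between \eqref{eq:apfelbeck_inequalities_a_la_dyson} and \eqref{eq:my_inequalities_cases}. The improvement in Theorem \ref{t:my_inequalities} rests on exploiting the \emph{uniform} hypothesis, which supplies an integer point at \emph{every} scale and so allows the inductive extraction to be carried out with no Minkowski penalty; the integrality of the minors contributing to the last $n$ coordinates of $\vec w$ is also used crucially to sharpen the Laplace estimate wherever the naive bound would be too weak.
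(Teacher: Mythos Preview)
Your plan has a genuine gap at exactly the point you flag as ``the main obstacle'': the uniform hypothesis does \emph{not} let you extract $m+n-1$ linearly independent lattice points at prescribed scales without a penalty. The hypothesis only guarantees, for each $t$, \emph{some} nonzero point of $\Lambda$ in $P(t,t^{-\gamma})$; it says nothing about that point lying outside a given subspace. In particular, if you have already chosen $\vec a_1,\ldots,\vec a_i$ and you now want $\vec a_{i+1}\in P(t_{i+1},t_{i+1}^{-\gamma})\setminus\spanned(\vec a_1,\ldots,\vec a_i)$, nothing prevents every lattice point in that box from lying in the span. Worse, your optimisation asks for several vectors at the \emph{same} small scale $u$ (``the remaining $m$ scales at a smaller value''), and getting $m$ independent points in a single box is precisely what costs a Minkowski second-theorem factor. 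Your closing remark that ``the integrality of the minors\ldots is used crucially to sharpen the Laplace estimate'' does not repair this: for $n>1$ the last $n$ coordinates of $\vec w$ are minors that involve $n-1$ rows from the error block $\vec z_i$, not ``minors of the integral $\vec x_i$'s alone'', so integrality does not give the lower bound you need there.

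The paper's argument avoids the whole issue by working with \emph{two} vectors rather than $d-1$. Given a target box $\widehat M_{h,r}$ for the transposed problem, one first checks whether the ``transference-dual'' box $M_{h^\ast,r^\ast}$ (with $h^\ast\asymp r^m h^{n-1}$, $r^\ast\asymp r^{m-1}h^n$) already contains a lattice point; if so, the ordinary Mahler--Dyson transference finishes. If not, one dilates that empty box homothetically until it first meets $\Lambda$, obtaining a point $\vec v_1$, and then stretches in the orthogonal direction, using the uniform hypothesis to guarantee a second, necessarily non-collinear, point $\vec v_2$ within a controlled range. The key lemma is then a two-vector statement: if $\vec v_1,\vec v_2$ lie in suitable boxes, one bounds $|\vec v_1\wedge\vec v_2|$, invokes the equality $\det(\cL\cap\Z^d)=\det(\cL^\perp\cap\Z^d)$, applies Vaaler's lower bound for the $(d-2)$-dimensional central section of the cube, and finishes with Minkowski's theorem in $\cL^\perp$. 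The uniform hypothesis is used only to produce the \emph{second} vector and to bound the product $\lambda_1\lambda_2$ of the two relevant scales; no higher-dimensional independence is ever needed.
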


  Here $\alpha(\Theta)$ and $\alpha(\tr\Theta)$ are a priori allowed to attain the value $+\infty$, which gives sense to the inequalities \ref{eq:my_inequalities_cases} in case one of the denominators happens to be equal to $0$.

  Each statement concerning $\Theta$ and $\tr\Theta$ is obviously invariant under the swapping of pairs $(n,\Theta)$ and $(m,\tr\Theta)$. Therefore, if we fix $n$ and $m$, such that $n\leq m$, $m\neq1$, the two inequalities in \eqref{eq:my_inequalities_cases} split into four ones. At the same time it follows from Minkowski's convex body theorem that
  \begin{equation} \label{eq:alpha_geq_m_over_n}
    \alpha(\Theta)\geq m/n\quad\text{ and }\quad\alpha(\tr\Theta)\geq n/m,
  \end{equation}
  whence we see that one of the four inequalities mentioned above vanishes. Thus, we get the following reformulation of Theorem \ref{t:my_inequalities}:

  \begin{theorem} \label{t:my_inequalities_reformulated}
    For all positive integers $n$, $m$, $1\leq n\leq m$, $m\neq1$, we have
    \begin{align}
      \alpha(\tr\Theta) & \geq\frac{n-\alpha(\Theta)^{-1}}{m-1}\,,
      \label{eq:my_inequalities_1} \\
      \alpha(\Theta)^{-1} & \leq\frac{n-\alpha(\tr\Theta)}{m-1}\,,\qquad\,\text{ if }\ \alpha(\tr\Theta)\leq1,
      \label{eq:my_inequalities_2} \\
      \alpha(\Theta) & \geq\frac{m-\alpha(\tr\Theta)^{-1}}{n-1}\,,\quad\text{ if }\ \alpha(\tr\Theta)\geq1.
      \label{eq:my_inequalities_3}
    \end{align}
  \end{theorem}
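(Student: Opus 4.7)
The plan is to work in the geometry-of-numbers framework. Associate to $\Theta$ the unimodular lattice $\Lambda=\{(\vec x,\Theta\vec x-\vec y):\vec x\in\Z^m,\vec y\in\Z^n\}\subset\R^{m+n}$ and to each pair $(t,\gamma)$ the symmetric parallelepiped
\[
\cM(t,\gamma)=\{(\vec\xi,\vec\eta)\in\R^m\times\R^n : |\vec\xi|_\infty\leq t,\ |\vec\eta|_\infty\leq t^{-\gamma}\},
\]
of volume $2^{m+n}t^{m-n\gamma}$. Then $\alpha(\Theta)$ equals the supremum of those $\gamma$ for which $\lambda_1(\cM(t,\gamma),\Lambda)\leq 1$ holds for all sufficiently large $t$; the analogous description with $\Theta$ replaced by $\tr\Theta$ and the two blocks of coordinates interchanged governs $\alpha(\tr\Theta)$ via the polar lattice $\Lambda^*$. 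Under the symmetry $(n,\Theta)\leftrightarrow(m,\tr\Theta)$ the inequalities \eqref{eq:my_inequalities_2} and \eqref{eq:my_inequalities_3} correspond exactly to the two alternatives of Theorem~\ref{t:my_inequalities} applied with $\tr\Theta$ playing the role of $\Theta$, so it suffices to prove \eqref{eq:my_inequalities_1}.

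Fix $\gamma$ slightly less than $\alpha(\Theta)$, so that $\lambda_1(\cM(t,\gamma),\Lambda)\leq 1$ for all $t\geq T_0$. The proof combines this with Mahler's duality $\lambda_j(\cM,\Lambda)\cdot\lambda_{m+n-j+1}(\cM^\circ,\Lambda^*)\asymp 1$ and Minkowski's second theorem $\prod_j\lambda_j(\cM(t,\gamma),\Lambda)\asymp t^{n\gamma-m}$. Applying this chain at a single $t$ using only the estimate $\lambda_1\leq 1$ yields $\lambda_{m+n}\gtrsim t^{(n\gamma-m)/(m+n-1)}$ and, via Mahler duality, recovers only the Jarn\'\i k--Apfelbeck inequality \eqref{eq:apfelbeck_inequalities_a_la_dyson}.

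To reach the stronger \eqref{eq:my_inequalities_1} one must exploit the uniformity across a full range of $t$. The key structural fact is that as $t$ grows the bound $|\vec\eta|_\infty\leq t^{-\gamma}$ tightens while $|\vec\xi|_\infty\leq t$ loosens, so any fixed short lattice vector is eventually ejected from $\cM(t,\gamma)$ and has to be replaced by a new one. Over a suitably long interval $t\in[T,T^{1+\kappa}]$ this forces, after a small loss $\gamma\to\gamma'=\gamma-o_\kappa(1)$, the existence of several linearly independent lattice vectors of $\Lambda$ inside a common box $\cM(T^{1+\kappa},\gamma')$. Feeding this enriched input into Minkowski's second theorem, passing through Mahler's duality, and working out the resulting exponent for the polar box should produce precisely $\alpha(\tr\Theta)\geq(n-\gamma^{-1})/(m-1)$; letting $\gamma\nearrow\alpha(\Theta)$ then gives \eqref{eq:my_inequalities_1}.

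The main obstacle is making this multi-vector extraction both quantitatively effective and precise: one must count how many linearly independent short lattice vectors the uniformity really produces over $[T,T^{1+\kappa}]$ and then track carefully the losses incurred in $\gamma\to\gamma'$ and in the Minkowski--Mahler computation, so that the final exponent is exactly $(n-\alpha(\Theta)^{-1})/(m-1)$ rather than something larger. A separate, classical argument is also needed for the degenerate case in which all short lattice vectors lie in a fixed proper rational subspace of $\R^{m+n}$; this case forces $\alpha(\Theta)=+\infty$, and the inequality \eqref{eq:my_inequalities_1} then reduces to the statement $\alpha(\tr\Theta)\geq n/(m-1)$, which has to be handled on its own.
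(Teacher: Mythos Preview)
Your reduction is correct: by the $(n,\Theta)\leftrightarrow(m,\tr\Theta)$ symmetry together with $\alpha(\Theta)\geq m/n$, it suffices to establish \eqref{eq:my_inequalities_1}, and this is exactly how the paper organises things. But from that point on your proposal is a plan rather than a proof, and the gap you yourself flag is real.

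The difficulty is not merely one of bookkeeping. Your scheme is: over an interval $[T,T^{1+\kappa}]$ the uniform bound forces ``several'' independent short vectors in a common box, and then Minkowski's second theorem plus Mahler's duality should convert this into a first-minimum bound for the polar lattice. You do not say how many independent vectors you obtain, nor where in the box they sit, and both matter. If you simply record $\lambda_1,\dots,\lambda_k\leq 1$ for some $k$ in $\cM(T^{1+\kappa},\gamma')$, Minkowski's second theorem only gives $\lambda_d\gtrsim T^{(1+\kappa)(n\gamma'-m)/(d-k)}$, and pushing this through duality produces an exponent depending on $k$ and $\kappa$ that does not simplify to $(n-\alpha(\Theta)^{-1})/(m-1)$ without further structural input. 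The specific shape of the target exponent --- the $m-1$ in the denominator and the $\alpha(\Theta)^{-1}$ in the numerator --- is not visible from a generic successive-minima count.

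The paper's argument is different in kind. It needs exactly \emph{two} non-collinear integer points, located not in a common box but in two carefully matched boxes $M_{\lambda_1,(h/r)\lambda_1}$ and $M_{(r/h)\lambda_2,\lambda_2}$ with the product control $\lambda_1\lambda_2\lesssim h^{n-1}r^{m-1}$. These are produced by a ``creeping'' argument (dilate a well-chosen empty box until the first lattice point appears, then continue in the complementary direction), and the product bound is exactly where the uniform hypothesis on $\psi$ enters, via the monotonicity of $t\psi(t)$. The pair is then fed into a geometric lemma that passes to the $(d-2)$-dimensional orthogonal complement of $\spanned_\R(\vec v_1,\vec v_2)$ and applies Minkowski there, using Vaaler's lower bound on central sections of the cube; this is what makes $d-2=m+n-2$ appear and ultimately yields the denominator $m-1$. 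None of this structure is captured by bounding a few successive minima of a single box, so as it stands your outline does not reach \eqref{eq:my_inequalities_1}.
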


  The case $n=1$ is worth considering separately, for in this case we have $\alpha(\tr\Theta)\leq1$ (see \cite{jarnik_tiflis}), i.e. \eqref{eq:my_inequalities_3} can hold only if $\alpha(\tr\Theta)=1$, which is already contained in \eqref{eq:my_inequalities_2}. Thus, we get

  \begin{theorem} \label{t:my_inequalities_linear_form}
    If $\,n=1$ and $m\neq1$, then
    \begin{align}
      \alpha(\tr\Theta)  \geq\frac{1-\alpha(\Theta)^{-1}}{m-1}\,,
      \label{eq:my_inequalities_linear_form_1} \\
      \alpha(\Theta)^{-1}  \leq\frac{1-\alpha(\tr\Theta)}{m-1}\,.
      \label{eq:my_inequalities_linear_form_2}
    \end{align}
  \end{theorem}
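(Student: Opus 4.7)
The plan is to derive Theorem~\ref{t:my_inequalities_linear_form} as a direct specialization of Theorem~\ref{t:my_inequalities_reformulated} to the case $n=1$, rather than by any independent geometry-of-numbers argument. Setting $n=1$ in \eqref{eq:my_inequalities_1} literally reproduces \eqref{eq:my_inequalities_linear_form_1}, so the first bound requires no further work. For \eqref{eq:my_inequalities_linear_form_2}, one substitutes $n=1$ into \eqref{eq:my_inequalities_2}; the resulting expression is exactly the claim, but it is available only under the side condition $\alpha(\tr\Theta)\leq 1$, which must be verified in the present setting.

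The verification is the single external input I would invoke: Jarn\'{\i}k's classical bound from \cite{jarnik_tiflis}, already quoted in the paragraph preceding the theorem, that for a single linear form one has the unconditional inequality $\alpha(\tr\Theta)\leq 1$. This Dirichlet-type upper bound holds without any restriction on $m$, so the hypothesis of \eqref{eq:my_inequalities_2} is automatically satisfied when $n=1$, and \eqref{eq:my_inequalities_linear_form_2} follows at once.

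It remains to argue that \eqref{eq:my_inequalities_3} contributes nothing extra when $n=1$, so that the two inequalities stated in Theorem~\ref{t:my_inequalities_linear_form} really exhaust the content of Theorem~\ref{t:my_inequalities_reformulated}. Here the denominator $n-1$ vanishes, and the hypothesis $\alpha(\tr\Theta)\geq 1$ combined with the Jarn\'{\i}k bound forces the boundary value $\alpha(\tr\Theta)=1$; in that case \eqref{eq:my_inequalities_linear_form_2} already yields $\alpha(\Theta)^{-1}\leq 0$, i.e.\ $\alpha(\Theta)=+\infty$, which is precisely what \eqref{eq:my_inequalities_3} would demand read in a limiting sense. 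There is no real obstacle in the proof: the theorem is a packaging of the $n=1$ case, and all the genuine work is hidden in Theorem~\ref{t:my_inequalities_reformulated}, where the geometry-of-numbers arguments must establish \eqref{eq:my_inequalities_1}--\eqref{eq:my_inequalities_3} in full generality.
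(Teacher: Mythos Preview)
Your proposal is correct and follows essentially the same route as the paper: the paper also obtains Theorem~\ref{t:my_inequalities_linear_form} by specializing Theorem~\ref{t:my_inequalities_reformulated} to $n=1$, invoking Jarn\'{\i}k's bound $\alpha(\tr\Theta)\leq1$ from \cite{jarnik_tiflis} to validate the hypothesis of \eqref{eq:my_inequalities_2}, and noting that \eqref{eq:my_inequalities_3} adds nothing since its hypothesis would force $\alpha(\tr\Theta)=1$, a case already covered by \eqref{eq:my_inequalities_2}.
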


  Theorem \ref{t:my_inequalities_linear_form} with $m=2$ obviously implies Theorem \ref{t:jarnik_equality}.
  It is also not difficult, though it takes some calculation, to see that Theorem \ref{t:my_inequalities_linear_form} implies Theorem \ref{t:jarnik_inequalities} and that Theorem \ref{t:my_inequalities} implies Theorem \ref{t:apfelbeck_inequalities}.

  \subsection{Individual exponents}

  Our second result concerns the relation between $\beta(\Theta)$ and $\beta(\tr\Theta)$. In the case $n=1$ we have the classical Khintchine's transference theorem (see \cite{khintchine_palermo}):

  \begin{classic} \label{t:khintchine_transference}
    If $n=1$, then
    \begin{equation} \label{eq:khintchine_transference}
      \frac{\beta(\Theta)}{(m-1)\beta(\Theta)+m}\leq\beta(\tr\Theta)\leq\frac{\beta(\Theta)-m+1}{m}\,.
    \end{equation}
  \end{classic}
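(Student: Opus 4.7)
The proof is the classical geometry-of-numbers argument via Mahler's inequalities for the successive minima of polar parallelepipeds. For $t>1$ and a value $\gamma$ slightly below $\beta(\Theta)$, introduce the centrally symmetric body
\[
  \Pi(t)=\bigl\{(\vec x,y)\in\R^{m+1}:\ |\vec x|_\infty\leq t,\ |\Theta\vec x-y|\leq t^{-\gamma}\bigr\},
\]
of volume $2^{m+1}t^{m-\gamma}$, whose reciprocal parallelepiped (up to a dimensional constant) is
\[
  \Pi(t)^\circ=\bigl\{(\vec u,v)\in\R^{m+1}:\ |u_j+\theta_jv|\leq t^{-1}\ (1\leq j\leq m),\ |v|\leq t^{\gamma}\bigr\}.
\]
Nonzero integer vectors of $\Pi(t)$ realize the approximations defining $\beta(\Theta)$, while nonzero integer vectors $(\vec u,v)\in\Pi(t)^\circ$ yield, via $(\vec p,q)=(-\vec u,v)$, simultaneous approximations $\max_j|\theta_jq-p_j|\leq t^{-1}$, $|q|\leq t^{\gamma}$ of the kind controlled by $\beta(\tr\Theta)$.

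Fix $\gamma<\beta(\Theta)$. By definition, $\Pi(t)$ contains a nonzero integer vector for arbitrarily large $t$, so $\lambda_1(\Pi(t))\leq1$ along such a sequence. Minkowski's second theorem then yields $\prod_{i=1}^{m+1}\lambda_i(\Pi(t))\geq c_m t^{\gamma-m}$, and combined with $\lambda_1\leq1$ this forces $\lambda_{m+1}(\Pi(t))\geq c_m' t^{(\gamma-m)/m}$. Mahler's reciprocity $\lambda_1(\Pi(t)^\circ)\cdot\lambda_{m+1}(\Pi(t))\leq(m+1)!$ in turn gives $\lambda_1(\Pi(t)^\circ)\leq C t^{1-\gamma/m}$, so $\Pi(t)^\circ$ contains a nonzero integer point producing $(\vec p,q)$ with $|q|\leq C t^{(m+(m-1)\gamma)/m}$ and $\max_j|\theta_jq-p_j|\leq C t^{-\gamma/m}$. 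Eliminating $t$ between these two bounds yields $\max_j|\theta_jq-p_j|\leq C'|q|^{-\gamma/((m-1)\gamma+m)}$, and letting $\gamma\nearrow\beta(\Theta)$ establishes the left-hand inequality of \eqref{eq:khintchine_transference}. The right-hand inequality is obtained by running the identical machine with $\Theta$ and $\tr\Theta$ interchanged: starting from $\Pi'(s)=\{(y,\vec x):|y|\leq s,\ \max_i|\theta_iy-x_i|\leq s^{-\delta}\}$ with $\delta<\beta(\tr\Theta)$, Minkowski and Mahler applied to its polar, followed by the substitution $T=s^{1/m}$, produce an integer pair $(\vec x,y)$ with $|\Theta\vec x-y|\leq C|\vec x|_\infty^{-(m\delta+m-1)}$, hence $\beta(\Theta)\geq m\delta+m-1$ for every $\delta<\beta(\tr\Theta)$.

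The principal subtlety is that a single scale $t$ produces only one approximation, whereas the definition of $\beta(\tr\Theta)$ requires infinitely many distinct pairs $(\vec p,q)$. To extract infinitely many, observe that if the denominators $q$ obtained along the infinite sequence of $t$'s stayed bounded, then the estimate $\max_j|\theta_jq-p_j|\leq C t^{-\gamma/m}\to0$ with $q$ fixed would force $\theta_jq\in\Z$ for each $j$, i.e.\ $\Theta\in\Q^m$; but in that case $\beta(\Theta)=+\infty$ and the inequalities \eqref{eq:khintchine_transference} are vacuous. Otherwise $|q|\to\infty$ along a subsequence of $t$'s, and the bound $\max_j|\theta_jq-p_j|\leq C'|q|^{-\gamma/((m-1)\gamma+m)}$ holds for infinitely many distinct pairs $(\vec p,q)$. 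Once this is ensured, the proof reduces to the bookkeeping of exponents in the Minkowski/Mahler chain displayed above.
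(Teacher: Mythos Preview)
Your argument is correct: this is the classical Khintchine--Mahler route via reciprocal parallelepipeds, Minkowski's second theorem, and Mahler's inequality $\lambda_1(K^\circ)\lambda_{d}(K)\leq d!$ for successive minima. The exponent bookkeeping checks out, and you handle the one genuine subtlety (extracting infinitely many distinct approximants rather than a single one) adequately. One small imprecision: when $\Theta\in\Q^m$ the left inequality in~\eqref{eq:khintchine_transference} reads $\beta(\tr\Theta)\geq 1/(m-1)$, which is not literally vacuous; but since rationality of all $\theta_j$ forces $\beta(\tr\Theta)=+\infty$ as well, the conclusion still holds.

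The paper does not give its own proof of Theorem~\ref{t:khintchine_transference}: it is quoted as Khintchine's classical result and serves only as background for the refinements in Theorems~\ref{t:loranoyadenie} and~\ref{t:my_transference}. That said, the paper's machinery furnishes an alternative route: Theorem~\ref{t:my_transference} (proved via the section-dual construction $M^\wedge$ of Section~\ref{sec:section_dual} together with Lemma~\ref{l:section_dual_at_work}) directly implies Dyson's inequality~\eqref{eq:dyson_transference}, which specializes to each half of~\eqref{eq:khintchine_transference} upon setting $n=1$ or $m=1$. The difference in spirit is this: you bound $\lambda_{m+1}(\Pi)$ from below and then transfer to the polar via Mahler's reciprocity, whereas the paper bypasses successive minima altogether by showing that a single lattice point in $M_{U,X}$ already lies in $(\widehat M_{Y,V})^\wedge$, so that Minkowski's first theorem applied to the orthogonal hyperplane section produces the desired point in $\widehat M_{Y,V}$. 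At the level of Diophantine exponents the two approaches are equivalent; the section-dual method buys the sharper constant $\Delta_d^{-1/(d-1)}$ in place of $d-1$ in the transference theorem, which is invisible once one passes to exponents.
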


  These inequalities cannot be improved (see \cite{jarnik_1936_1} and \cite{jarnik_1936_2}) if only $\beta(\Theta)$ and $\beta(\tr\Theta)$ are considered. Stronger inequalities can be obtained if $\alpha(\Theta)$ and $\alpha(\tr\Theta)$ are also taken into account. The corresponding result for $n=1$ belongs to Laurent and Bugeaud (see \cite{laurent_2dim}, \cite{bugeaud_laurent_ndim}). They proved the following

  \begin{classic} \label{t:bugeaud_laurent}
    If $n=1$, then
    \begin{equation} \label{eq:bugeaud_laurent}
      \frac{(\alpha(\Theta)-1)\beta(\Theta)}{((m-2)\alpha(\Theta)+1)\beta(\Theta)+(m-1)\alpha(\Theta)}\leq
      \beta(\tr\Theta)\leq
      \frac{(1-\alpha(\tr\Theta))\beta(\Theta)-m+2-\alpha(\tr\Theta)}{m-1}\,.
    \end{equation}
  \end{classic}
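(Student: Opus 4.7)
The plan is to work in $\R^{m+1}$ with the lattice $\Lambda_\Theta=\{(\vec x,\Theta\vec x-y):(\vec x,y)\in\Z^{m+1}\}$ and its suitably polar lattice $\Lambda_{\tr\Theta}$, interpreting $\alpha$ and $\beta$ as decay rates for the first minima of certain rectangular parallelepipeds with respect to these lattices. The improvement over Khintchine's original argument (Theorem~\ref{t:khintchine_transference}) comes from the fact that, in addition to a single record vector realising $\beta$, the uniform exponent $\alpha$ supplies an extra short lattice vector at \emph{every} intermediate scale, and it is this supplementary family of approximations that sharpens the transference.

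For the upper bound on $\beta(\tr\Theta)$, I would fix a height $t$ at which $\beta(\tr\Theta)$ is nearly realised by some $(\vec p,q)\in\Z^{m+1}$ with $|q|\leq t$ and $|\tr\Theta q-\vec p|_\infty\leq t^{-\beta(\tr\Theta)+\e}$. Choose an intermediate scale $s\leq t$ and apply Definition~\ref{def:alpha} to $\tr\Theta$ at scale $s$ to extract a second, linearly independent vector $(\vec p',q')$ with $|q'|\leq s$ and $|\tr\Theta q'-\vec p'|_\infty\leq s^{-\alpha(\tr\Theta)}$. Combining these two vectors with $m-1$ appropriately chosen coordinate vectors yields an $(m+1)\times(m+1)$ integer matrix $M$ with $|\det M|\geq 1$; expanding $\det M$ in terms of the approximation residuals produces a non-zero integer vector $(\vec x^\ast,y^\ast)$ whose linear form residue $|\Theta\vec x^\ast-y^\ast|$ is bounded above by an explicit polynomial in $t^{-\beta(\tr\Theta)}$ and $s^{-\alpha(\tr\Theta)}$. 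Matching this against the definitional lower bound $|\Theta\vec x^\ast-y^\ast|\geq|\vec x^\ast|_\infty^{-\beta(\Theta)-\e}$ and optimising $s$ gives the asserted inequality after routine algebra. For the lower bound on $\beta(\tr\Theta)$ one dualises: start from integer vectors realising $\beta(\Theta)$, supply uniform companions via $\alpha(\Theta)$, and assemble them into a parallelepiped $\Pi_t\subset\R^{m+1}$ whose first two successive minima with respect to $\Lambda_\Theta$ are controlled; Mahler's compound body theorem together with Minkowski's second theorem applied to the polar body $\Pi_t^\ast$ then produces a short vector in $\Lambda_{\tr\Theta}$, i.e.\ a simultaneous approximation to $\tr\Theta$, holding for infinitely many $t$.

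The principal obstacle in both directions is the calibration of the intermediate scale $s$ (or, equivalently, the aspect ratio of $\Pi_t$): choosing it too close to the record scale recovers only Khintchine's bound, while choosing it too far dilutes the $\alpha$-guarantee to uselessness. The sharp balance, yielding exactly the constants in \eqref{eq:bugeaud_laurent}, is located by equating the three length scales encoded in $\beta(\Theta)$, $\beta(\tr\Theta)$ and the corresponding $\alpha$-exponent. A secondary technical difficulty, already visible in Laurent's original two-dimensional argument, is the need to exclude the degenerate case in which all the chosen short vectors lie in a proper rational subspace; this is typically handled by a Davenport--Schmidt style three-vector lemma or by a careful reselection of the coordinate columns completing $M$.
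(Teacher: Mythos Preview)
Your outline has the right conceptual core --- the improvement over Khintchine comes precisely from pairing a $\beta$-record vector with a second, independent vector furnished at every scale by the uniform exponent $\alpha$ --- and the tools you mention (compound bodies, Minkowski's second theorem, polar lattices) are in the right neighbourhood. But the key transfer step in your upper-bound argument is not coherent as written. You form an $(m+1)\times(m+1)$ integer matrix $M$ from the two approximation vectors and $m-1$ coordinate vectors, note $|\det M|\geq 1$, and then say that ``expanding $\det M$ in terms of the approximation residuals produces a non-zero integer vector $(\vec x^\ast,y^\ast)$''. A determinant is a scalar; if you mean a cofactor/cross-product construction, that requires $m$ vectors in $\R^{m+1}$, not $m+1$, so the count is off by one. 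More seriously, you never explain why the resulting vector should have small linear-form residue $|\Theta\vec x^\ast-y^\ast|$, nor how its height $|\vec x^\ast|_\infty$ is controlled --- and both are needed before the comparison with $|\vec x^\ast|_\infty^{-\beta(\Theta)-\e}$ can yield anything. The optimisation of the intermediate scale $s$ is likewise only asserted, not carried out, and it is exactly this calibration that produces the specific constants in \eqref{eq:bugeaud_laurent}.

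The paper does not prove Theorem~\ref{t:bugeaud_laurent} directly; it is quoted as a result of Laurent and Bugeaud and then recovered as the $n=1$ (respectively $m=1$, after swapping $\Theta\leftrightarrow\tr\Theta$) specialisation of the paper's own Theorem~\ref{t:loranoyadenie}. The mechanism behind Theorem~\ref{t:loranoyadenie} (via Theorem~\ref{t:loranoyadenie_f} and Lemmas~\ref{l:M_h_r_section}, \ref{l:the_alpha_beta_semicore_1}, \ref{l:the_alpha_beta_semicore_2}) does start, as in your lower-bound sketch, from two non-collinear integer points $\vec v_1\in M_{\phi(t),t}$ and $\vec v_2\in M_{\psi(t),t}$ supplied by $\alpha(\Theta)$ and $\beta(\Theta)$. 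But the transfer to $\widehat M_{h,r}$ is not via Mahler's compound bodies and Minkowski's second theorem: instead one passes to the $(d-2)$-dimensional orthogonal complement of $\spanned_\R(\vec v_1,\vec v_2)$, uses Proposition~\ref{prop:covolumes} to identify the covolumes, and applies Vaaler's lower bound on central cube sections to guarantee enough volume for Minkowski's \emph{first} theorem. This gives explicit constants uniformly in $n,m$, handles both inequalities of \eqref{eq:bugeaud_laurent} by the same lemma (just swapping the roles of $h$ and $r$), and sidesteps entirely the rational-subspace degeneracy you flag: non-collinearity of $\vec v_1,\vec v_2$ is all that is needed.
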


  It is easily verified with the help of the inequalities $\alpha(\Theta)\geq m$ and $\alpha(\tr\Theta)\geq 1/m$ valid for $n=1$ that Theorem \ref{t:bugeaud_laurent} refines Theorem \ref{t:khintchine_transference}.

  Theorem \ref{t:khintchine_transference} was generalized to the case of arbitrary $n$, $m$ by Dyson \cite{dyson} (a simpler proof was later obtained by Khintchine \cite{khintchine_dobav}):

  \begin{classic} \label{t:dyson_transference}
    For all $n$, $m$, not equal simultaneously to $1$,
    \begin{equation} \label{eq:dyson_transference}
      \beta(\tr\Theta)\geq\frac{n\beta(\Theta)+n-1}{(m-1)\beta(\Theta)+m}\,.
    \end{equation}
  \end{classic}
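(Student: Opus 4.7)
The plan is to follow the classical geometry-of-numbers route: extract a dual integer approximation from each primal one via Minkowski's first theorem, applied not to a full-dimensional parallelepiped---which only recovers the trivial Dirichlet exponent---but to its central section by a rational hyperplane furnished by the primal solution. The gain of one dimension in the ambient lattice is exactly the source of the improvement.

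Fix $\e>0$ and put $\beta=\beta(\Theta)$ (assumed finite; the case $\beta=+\infty$ follows by continuity in the final bound). By Definition \ref{def:beta} I choose a sequence of primitive integer pairs $(\vec p,\vec q)\in\Z^m\times\Z^n$ with $X:=|\vec p|_\infty\to\infty$ and $Y:=|\Theta\vec p-\vec q|_\infty\leq X^{-\beta+\e}$. For auxiliary parameters $V,U>0$ to be optimized, consider the symmetric convex body
\[ \Pi=\bigl\{(\vec u,\vec v)\in\R^m\times\R^n : |\vec v|_\infty\leq V,\ |\tr\Theta\vec v-\vec u|_\infty\leq U\bigr\}, \]
together with the integer vector $\vec w=(\vec p,-\vec q)$ and the hyperplane $H=\vec w^\perp$. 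The algebraic identity
\[ \tr{\vec u}\vec p-\tr{\vec v}\vec q=\tr{\vec v}(\Theta\vec p-\vec q)-\tr{(\tr\Theta\vec v-\vec u)}\vec p \]
bounds the support function of $\Pi$ at $\vec w$ by $nVY+mUX$, whence the standard slab-versus-slice inequality forces the $(m+n-1)$-dimensional volume of $\Pi\cap H$ to be at least a constant multiple of $V^nU^m\,|\vec w|_2/(VY+UX)$. Since $\vec w$ is primitive, $\Z^{m+n}\cap H$ is a sublattice of covolume $|\vec w|_2$, and Minkowski's first theorem supplies a nonzero integer point $(\vec u,\vec v)\in\Pi\cap H$ as soon as $V^nU^m$ exceeds a constant multiple of $VY+UX$.

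Balancing $VY\asymp UX$ and then equating $V^nU^m$ with this common value pins down the optimal parameters: a short computation forces $V\asymp X^{(m+(m-1)\beta)/(m+n-1)}$ and $U\asymp X^{-(n-1+n\beta)/(m+n-1)}$, so in particular $U<1$ for $X$ large. This excludes $\vec v=0$ in the produced integer point (otherwise $\vec u$ would be a nonzero integer vector of sup-norm $<1$), so $(\vec u,\vec v)$ is a genuine dual approximation with $|\tr\Theta\vec v-\vec u|_\infty\leq U\leq|\vec v|_\infty^{-\gamma+o(1)}$, where $\gamma=(n\beta+n-1)/((m-1)\beta+m)$. Since $X$ runs through an infinite sequence, so does $V$, yielding infinitely many such dual approximations; the theorem then follows on sending $\e\to 0$. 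The main obstacle is precisely the slice-volume step: a direct application of Minkowski to $\Pi$ itself gives only the Dirichlet bound $\gamma=n/m$, and one must use the coupled off-diagonal width estimate $nVY+mUX$ extracted from the identity above to cut one dimension and supply the extra factor.
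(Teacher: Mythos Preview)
Your argument is correct and is essentially the classical Khintchine--Mahler proof of Dyson's inequality. Note, however, that the paper does not give its own proof of Theorem~\ref{t:dyson_transference}: it is quoted as a known result (Dyson \cite{dyson}, Khintchine \cite{khintchine_dobav}) and then observed to follow from Mahler's Theorem~\ref{t:mahler_transference} by the short computation displayed after that theorem.

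That said, the paper's reproof of Mahler's theorem (Theorem~\ref{t:my_transference}) rests on exactly the mechanism you use. Your steps ``$\Z^{m+n}\cap H$ has covolume $|\vec w|_2$'' and ``apply Minkowski to $\Pi\cap H$'' are precisely Proposition~\ref{prop:covolumes} together with Lemma~\ref{l:section_dual_at_work}; your slab-versus-slice bound plays the role of the cube-section estimate in Corollary~\ref{cor:cube_wedge_cube}. The difference is packaging: the paper abstracts the passage ``primal integer point $\Rightarrow$ rational hyperplane $\Rightarrow$ Minkowski on the section'' into the \emph{section-dual} set $M^\wedge$ and the single inclusion $M_{U,X}\subset(\widehat M_{Y,V})^\wedge$, which then yields the sharper constant $\Delta_d^{-1/(d-1)}$ in place of $d-1$. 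Your direct route gives Dyson's exponent with less machinery but does not immediately produce that refined constant. Two small points worth making explicit in a write-up: reducing $(\vec p,\vec q)$ to a primitive vector is harmless (it only improves the inequality $Y\leq X^{-\beta+\e}$), and the ``infinitely many dual approximations'' clause is secured because $U_k\to0$ forces either infinitely many distinct $(\vec u_k,\vec v_k)$ or an exact relation $\tr\Theta\vec v=\vec u$, in which case $\beta(\tr\Theta)=\infty$.
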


  In the current paper we prove an analogous refinement of Theorem \ref{t:dyson_transference}:

  \begin{theorem} \label{t:loranoyadenie}
    For all positive integers $n$, $m$, not equal simultaneously to $1$, we have three inequalities
    \begin{align}
      & \beta(\tr\Theta)\geq
      \frac{n\beta(\Theta)+n-1}{(m-1)\beta(\Theta)+m}\,,
      \label{eq:loranoyadenie_1} \\
      & \beta(\tr\Theta)\geq
      \frac{(n-1)(1+\beta(\Theta))-(1-\alpha(\Theta))}{(m-1)(1+\beta(\Theta))+(1-\alpha(\Theta))}\,, \label{eq:loranoyadenie_2} \\
      & \beta(\tr\Theta)\geq
      \frac{(n-1)(1+\beta(\Theta)^{-1})-(\alpha(\Theta)^{-1}-1)}{(m-1)(1+\beta(\Theta)^{-1})+(\alpha(\Theta)^{-1}-1)}
      \,. \label{eq:loranoyadenie_3}
    \end{align}
  \end{theorem}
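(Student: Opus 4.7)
The plan is to pass to the geometry of numbers on the lattice $\Lambda=\Lambda_\Theta\subset\R^{m+n}$ generated by the columns of $\bigl(\begin{smallmatrix} I_m & 0 \\ \Theta & -I_n\end{smallmatrix}\bigr)$, whose points are $(\vec x,\Theta\vec x-\vec y)$ with $\vec x\in\Z^m$, $\vec y\in\Z^n$. The transposed system corresponds to the dual lattice $\Lambda^*\cong\Lambda_{\tr\Theta}$, and each of the three inequalities will come from Minkowski's convex body theorem applied to a suitable slice of a box $C=\{|\vec u|_\infty\le P,\ |\vec v|_\infty\le Q\}$.

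For \eqref{eq:loranoyadenie_1}, which is Dyson's classical bound, I would reproduce the standard argument: fix $\gamma<\beta(\Theta)$ and take $\vec z_0=(\vec x_0,\Theta\vec x_0-\vec y_0)\in\Lambda$ with $X=|\vec x_0|_\infty\to\infty$ and $|\Theta\vec x_0-\vec y_0|_\infty\le X^{-\gamma}$. In the hyperplane $H=\vec z_0^\perp$ the sublattice $\Lambda^*\cap H$ has covolume $|\vec z_0|_2$, and the hyperplane slice estimate
\[
  \vol_H(C\cap H)\ \ge\ \frac{\vol(C)\,|\vec z_0|_2}{2w_0},\qquad w_0\le mPX+nQY,
\]
turns Minkowski's theorem in $H$ into the inequality $P^mQ^n\gtrsim mPX+nQY$. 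Balancing $PX\asymp QY$ produces a nonzero vector of $\Lambda^*\cap H$ giving an approximation for $\tr\Theta$ with exponent $(n\gamma+n-1)/((m-1)\gamma+m)$; letting $\gamma\uparrow\beta(\Theta)$ yields \eqref{eq:loranoyadenie_1}.

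The refinements \eqref{eq:loranoyadenie_2} and \eqref{eq:loranoyadenie_3} use the uniform exponent $\alpha(\Theta)$ to supply a second lattice vector. Fix $\delta<\alpha(\Theta)$; for every $T$ sufficiently large there is $\vec z_T=(\vec x_T,\Theta\vec x_T-\vec y_T)\in\Lambda$ with $|\vec x_T|_\infty\le T$ and $|\Theta\vec x_T-\vec y_T|_\infty\le T^{-\delta}$. Choose $T$ as a function of $X$ so that $\vec z_0$ and $\vec z_T$ are linearly independent and run the previous argument in the codimension-two subspace $\Pi=\vec z_0^\perp\cap\vec z_T^\perp$: here $\Lambda^*\cap\Pi$ has covolume $|\vec z_0\wedge\vec z_T|_2$ and a two-slab slice estimate
\[
  \vol_\Pi(C\cap\Pi)\ \ge\ \frac{\vol(C)\,|\vec z_0\wedge\vec z_T|_2}{4w_0w_1},\qquad w_i\le mP|\vec x_i|_\infty+nQ|\Theta\vec x_i-\vec y_i|_\infty,
\]
reduces Minkowski in $\Pi$ to $P^mQ^n\gtrsim(mPX+nQY)(mPT+nQT^{-\delta})$. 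The two bounds \eqref{eq:loranoyadenie_2} and \eqref{eq:loranoyadenie_3} correspond to the two regimes of the free scale $T$, one in which $T$ is chosen on the ``small'' side of $X$ and one on the ``large'' side; in each regime the balance conditions $PX\asymp QY$ and $PT\asymp QT^{-\delta}$ together with the Minkowski constraint give the stated exponents as $\gamma\uparrow\beta(\Theta)$ and $\delta\uparrow\alpha(\Theta)$.

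The main obstacle is the codimension-two slice estimate above: it is obtained by choosing an orthonormal frame of the $2$-plane through $\vec z_0$ and $\vec z_T$, observing that the projection of $C$ onto this plane is contained in a parallelogram of area at most $4w_0w_1/|\vec z_0\wedge\vec z_T|_2$, and then invoking Fubini. A secondary difficulty is to guarantee linear independence of $\vec z_0$ and $\vec z_T$: the $\Lambda$-vectors parallel to $\vec z_0$ form a one-dimensional sublattice that must be avoided when choosing $T$, which is possible at the cost of an arbitrarily small loss in the exponent. Once these two geometric inputs are in place, the numerics of the optimization over $T$ split into the two regimes that produce \eqref{eq:loranoyadenie_2} and \eqref{eq:loranoyadenie_3} respectively.
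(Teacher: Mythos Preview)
Your overall strategy — pick two lattice vectors (one from the $\beta$-approximation, one from the uniform $\alpha$-approximation), pass to their orthogonal complement, and apply Minkowski there — is exactly the paper's. The paper packages this as a codimension-two Minkowski criterion (Lemma~\ref{l:M_h_r_section}) and then applies it twice (Lemmas~\ref{l:the_alpha_beta_semicore_1} and~\ref{l:the_alpha_beta_semicore_2}) to obtain \eqref{eq:loranoyadenie_2} and \eqref{eq:loranoyadenie_3}; inequality \eqref{eq:loranoyadenie_1} is simply quoted from Dyson. Your Fubini/projection bound for the $(d-2)$-dimensional slice plays the same role as the paper's combination of Vaaler's theorem with the wedge estimate of Lemma~\ref{l:cube_section}; this is a minor technical variation.

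Where your sketch breaks is in the extraction of the two bounds. If you impose \emph{both} balance conditions $PX\asymp QY$ and $PT\asymp QT^{-\delta}$, then $T$ is forced to equal $X^{(1+\gamma)/(1+\delta)}$ and you obtain a \emph{single} exponent, not two --- and that exponent is neither \eqref{eq:loranoyadenie_2} nor \eqref{eq:loranoyadenie_3}. The paper's two bounds come from two \emph{distinct} choices of the auxiliary vector, not from two regimes of one optimization: for \eqref{eq:loranoyadenie_2} the uniform vector is taken at the \emph{same height} as the $\beta$-vector (both with $|\vec x|_\infty\le t$, i.e.\ $T=X$), while for \eqref{eq:loranoyadenie_3} it is taken at the \emph{same depth} (both with $|\Theta\vec x-\vec y|_\infty\le t$, i.e.\ $T=X^{\gamma/\delta}$ so that $T^{-\delta}=X^{-\gamma}$). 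In each case only the first balance $PX\asymp QY$ is effectively used; the second factor then has a clear dominant term, and equating the two binding constraints yields the stated exponent. Your method is correct, but the ``two regimes of the free scale $T$'' paragraph should be replaced by these two explicit parameter choices.
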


  It follows from the inequalities $\beta(\Theta)\geq\alpha(\Theta)\geq m/n$ that \eqref{eq:loranoyadenie_2} is stronger than \eqref{eq:loranoyadenie_3} if and only if $\alpha(\Theta)<1$. The inequality \eqref{eq:loranoyadenie_1} coincides with \eqref{eq:dyson_transference}, and it is stronger than both \eqref{eq:loranoyadenie_2} and \eqref{eq:loranoyadenie_3} if and only if
  \[ \alpha(\Theta)<\min\left(\frac{(m-1)\beta(\Theta)+m}{n+m-1}\,,\,
     \frac{(n+m-1)\beta(\Theta)}{(n-1)+n\beta(\Theta)}\right), \]
  which is never the case if $n=1$ or $m=1$, since $\alpha(\Theta)\geq m/n$. Furthermore, if $n=1$, then $\alpha(\Theta)\geq1$ and \eqref{eq:loranoyadenie_3} becomes strongest and gives the lower bound in \eqref{eq:bugeaud_laurent}. If $m=1$, then $\alpha(\Theta)\leq1$ and \eqref{eq:loranoyadenie_2} becomes strongest and we can obtain from it the upper bound in \eqref{eq:bugeaud_laurent}, if we substitute $m$ by $n$ and $\Theta$ by $\tr\Theta$. Thus, Theorem \ref{t:loranoyadenie} generalizes Theorem \ref{t:bugeaud_laurent} and refines Theorem \ref{t:dyson_transference}.

  To clear up the division into cases with respect to both $\alpha(\Theta)$ and $\beta(\Theta)$ we give the following Proposition \ref{prop:dominions}. We leave it without proof, for the only difficulty in it is calculation and does not involve any nontrivial observations, except for the inequalities $\beta(\Theta)\geq\alpha(\Theta)\geq m/n$.

  \begin{proposition} \label{prop:dominions}
  $(i)$ If $m=1$, then $1/n\leq\alpha(\Theta)\leq1$ and for all $\beta(\Theta)\geq\alpha(\Theta)$ \eqref{eq:loranoyadenie_2} is not weaker than \eqref{eq:loranoyadenie_1} and \eqref{eq:loranoyadenie_3}.

  $(ii)$ Let $m\neq1$ and $m/n\leq\alpha(\Theta)\leq1$. If
  \[ \alpha(\Theta)\leq\beta(\Theta)\leq\frac{(d-1)\alpha(\Theta)-m}{m-1}\,, \]
  then \eqref{eq:loranoyadenie_2} is not weaker than \eqref{eq:loranoyadenie_1} and \eqref{eq:loranoyadenie_3}. Otherwise, \eqref{eq:loranoyadenie_1} is strongest.

  $(iii)$ Let $m\neq1$ and $1<\alpha(\Theta)<(d-1)/n$. If
  \[ \alpha(\Theta)\leq\beta(\Theta)\leq\frac{(n-1)\alpha(\Theta)}{d-1-n\alpha(\Theta)}\,, \]
  then \eqref{eq:loranoyadenie_3} is not weaker than \eqref{eq:loranoyadenie_1} and \eqref{eq:loranoyadenie_2}. Otherwise, \eqref{eq:loranoyadenie_1} is strongest.

  $(iv)$ Let $m\neq1$ and $\alpha(\Theta)\geq(d-1)/n$. Then for all $\beta(\Theta)\geq\alpha(\Theta)$
  \eqref{eq:loranoyadenie_3} is not weaker than \eqref{eq:loranoyadenie_1} and \eqref{eq:loranoyadenie_2}.  \end{proposition}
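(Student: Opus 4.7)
The plan is to reduce the proposition to three pairwise comparisons of the right-hand sides of \eqref{eq:loranoyadenie_1}--\eqref{eq:loranoyadenie_3}; denote these by $B_1$, $B_2$, $B_3$ respectively, and write $\alpha=\alpha(\Theta)$, $\beta=\beta(\Theta)$. Each comparison, after clearing denominators, factors into an elementary inequality in $\alpha$ and $\beta$; assembling the resulting thresholds with the standing bounds $\beta\geq\alpha\geq m/n$ (and Jarn\'{\i}k's bound $\alpha\leq 1$ when $m=1$) will produce the four cases (i)--(iv).

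The first step I would take is the comparison $B_2$ versus $B_3$, since it is the shortest. Both quantities have the common form $f(s,t)=((n-1)s-t)/((m-1)s+t)$, with $B_2=f(1+\beta,\,1-\alpha)$ and $B_3=f(1+\beta^{-1},\,\alpha^{-1}-1)$. Since $f$ depends only on the ratio $t/s$ and is strictly decreasing in it when $n+m>2$, and since
\[
\frac{\alpha^{-1}-1}{1+\beta^{-1}}=\frac{\beta}{\alpha}\cdot\frac{1-\alpha}{1+\beta}
\]
with $\beta/\alpha\geq 1$, the comparison will collapse at once to $B_2\geq B_3$ iff $\alpha\leq 1$, as is already noted in the paragraph preceding the proposition.

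For the remaining two comparisons I would cross-multiply. For $B_2$ versus $B_1$, the substitution $u=1+\beta$, $v=1-\alpha$ rewrites $B_1$ as $(nu-1)/((m-1)u+1)$, and the difference of cross-products factors out a positive $u$ to leave the linear inequality $(m-1)u+(d-1)v\leq d-2$; this is precisely $\beta\leq((d-1)\alpha-m)/(m-1)$ for $m>1$, and reduces to the automatic $\alpha\geq 1/n$ when $m=1$. For $B_3$ versus $B_1$, the analogous substitution $u'=1+\beta^{-1}$, $v'=\alpha^{-1}-1$ rewrites $B_1$ as $((n-1)u'+1)/(mu'-1)$ and yields after the same procedure the inequality $(d-1)/\alpha\leq n+(n-1)/\beta$, equivalent to $\beta(d-1-n\alpha)\leq(n-1)\alpha$; this is vacuous for $\alpha\geq(d-1)/n$ and otherwise reads $\beta\leq(n-1)\alpha/(d-1-n\alpha)$.

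All that remains is bookkeeping. In cases (i) and (ii) the hypothesis $\alpha\leq 1$ forces $B_2\geq B_3$, so only the $B_2$-vs-$B_1$ threshold is active, which matches the bound in (ii) and is automatic in (i). In cases (iii) and (iv) the hypothesis $\alpha>1$ reverses this to $B_3\geq B_2$, so only the $B_3$-vs-$B_1$ threshold matters, which matches the bound in (iii) and is vacuous in (iv). The one point requiring care will be tracking signs under cross-multiplication, but all denominators of $B_1$, $B_2$, $B_3$ remain strictly positive when $\beta\geq\alpha\geq m/n$, so no genuine obstacle arises---consistent with the author's remark that the proposition contains no nontrivial observations.
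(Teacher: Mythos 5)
Your proposal is correct, but there is nothing in the paper to compare it against: the author explicitly leaves Proposition \ref{prop:dominions} without proof, remarking that ``the only difficulty in it is calculation.'' Your calculation supplies exactly that omitted verification, and I checked the three key identities: $q_3=(\beta/\alpha)q_2$ for the ratios governing $B_2$ versus $B_3$; the factorization of the cross-product difference for $B_2$ versus $B_1$ into $u\bigl((d-2)-(m-1)u-(d-1)v\bigr)$, which indeed unwinds to $(m-1)\beta\leq(d-1)\alpha-m$; and the analogous factorization $u'\bigl((n-1)u'-(d-2)-(d-1)v'\bigr)$ for $B_3$ versus $B_1$, which unwinds to $\beta(d-1-n\alpha)\leq(n-1)\alpha$. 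The case assembly is also right, including the observations that the thresholds in (ii) and (iii) always lie above $\alpha$ because $n\alpha\geq m$, and that $(d-1)/n>1$ when $m\neq1$ so that case (iv) falls under $B_3\geq B_2$. The only blemish is your closing claim that all denominators are strictly positive under $\beta\geq\alpha\geq m/n$: when $m=1$ and $\alpha=1$ the denominators of $B_2$ and $B_3$ vanish (both inequalities then assert $\beta(\tr\Theta)=+\infty$), so that corner should either be excluded or treated by the obvious limiting convention; this does not affect the substance of the argument.
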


  \subsection{Transference theorem}

  Our third result deals with a strong version of Khintchine's transference theorem. One of the strongest ones belongs to Mahler (see \cite{mahler_casopis_linear}, \cite{mahler_matsbornik_dyson}, \cite{cassels}):

  \begin{classic} \label{t:mahler_transference}
    If there are $\vec x\in\Z^m$, $\vec y\in\Z^n$, such that
    \begin{equation} \label{eq:mahler_transference_hypothesis}
      0<|\vec x|_\infty\leq X,\qquad|\Theta\vec x-\vec y|_\infty\leq U,
    \end{equation}
    then there are $\vec x\in\Z^m$, $\vec y\in\Z^n$, such that
    \begin{equation} \label{eq:mahler_transference_statement}
      0<|\vec y|_\infty\leq Y,\qquad|\tr\Theta\vec y-\vec x|_\infty\leq V,
    \end{equation}
    where
    \[ Y=(d-1)\big(X^mU^{1-m}\big)^{\frac{1}{d-1}},\quad
       V=(d-1)\big(X^{1-n}U^n\big)^{\frac{1}{d-1}},\quad\text{ and }\quad d=n+m. \]
  \end{classic}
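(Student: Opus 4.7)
The plan is to pass to a lattice/convex-body formulation. Consider the unimodular lattice
\[
\La=\{(\vec x,\,\Theta\vec x-\vec y):\vec x\in\Z^m,\ \vec y\in\Z^n\}\subset\R^d,\qquad d=n+m,
\]
whose dual lattice is
\[
\La^*=\{(\vec x-\tr\Theta\vec y,\,\vec y):\vec x\in\Z^m,\ \vec y\in\Z^n\}\subset\R^d.
\]
A non-zero point of $\La$ in the box $\Pi=[-X,X]^m\times[-U,U]^n$ is precisely the data \eqref{eq:mahler_transference_hypothesis}, and a non-zero point of $\La^*$ in $\Pi'=[-V,V]^m\times[-Y,Y]^n$ encodes the desired \eqref{eq:mahler_transference_statement}.

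Given the primal point $\vec z_0=(\vec x_0,\Theta\vec x_0-\vec y_0)\in\La\cap\Pi$, one may assume without loss of generality that $\vec z_0$ is primitive in $\La$ (divide by the g.c.d.\ of its integer coordinates; this only shrinks its $\ell^\infty$-norms). Let $H=\vec z_0^\perp\subset\R^d$ and $M:=\La^*\cap H$. Since $\vec z_0$ is primitive and $\covol(\La)=1$, the $(d-1)$-dimensional lattice $M$ has covolume $|\vec z_0|_2$. It therefore suffices, by Minkowski's convex body theorem in $H$, to show that
\[
\vol_{d-1}(\Pi'\cap H)\;\geq\;2^{d-1}\,|\vec z_0|_2.
\]
The Brunn--Minkowski inequality, applied to parallel slices of the $0$-symmetric body $\Pi'$, tells us that the central slice is the largest, so
\[
\vol_{d-1}(\Pi'\cap H)\;\geq\;\frac{\vol_d(\Pi')}{w_{\vec z_0}(\Pi')},\qquad w_{\vec z_0}(\Pi')=\frac{2\bigl(V\|\vec x_0\|_1+Y\|\Theta\vec x_0-\vec y_0\|_1\bigr)}{|\vec z_0|_2}.
\]
Plugging in $\vol_d(\Pi')=2^dV^mY^n$ and using $\|\vec x_0\|_1\leq mX$, $\|\Theta\vec x_0-\vec y_0\|_1\leq nU$, the required estimate reduces to the arithmetic inequality
\[
V^mY^n\;\geq\;mVX+nYU.
\]
Substituting the prescribed values of $V$ and $Y$ rewrites both sides as multiples of $(X^mU^n)^{1/(d-1)}$, and the inequality collapses to $(d-1)^{d-1}\geq d$, which holds for $d\geq3$. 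The case $d=2$ (forcing $n=m=1$, $Y=X$, $V=U$) is trivial: take $(\vec y',\vec x')=(\vec x_0,\vec y_0)$.

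The main obstacle is confirming that the Brunn--Minkowski slicing estimate, combined with the crude worst-case bounds $\|\vec x_0\|_1\leq mX$ and $\|\Theta\vec x_0-\vec y_0\|_1\leq nU$, is tight enough to yield Mahler's constant $d-1$. Here the purely algebraic inequality $(d-1)^{d-1}\geq d$ holds with appreciable slack for $d\geq3$, which absorbs the looseness of the slicing step; no finer slicing inequality for boxes (such as a Vaaler-type lower bound) is required. The degenerate worry that the lattice point produced in $\La^*$ might have trivial $\vec y$-component is a corner case easily handled by Minkowski's theorem in the quotient, or by symmetry between $\Theta$ and $\tr\Theta$.
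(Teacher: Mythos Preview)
Your argument is correct, and its skeleton is the same one the paper uses: take the given integer point $\vec z_0$, pass to the hyperplane $\vec z_0^\perp$, observe that the relevant lattice there has covolume $|\vec z_0|_2$ (this is the paper's Proposition~\ref{prop:covolumes}), and apply Minkowski once you have a lower bound on the $(d-1)$-dimensional volume of the central slice of the target box. The paper packages the last two steps as Lemma~\ref{l:section_dual_at_work} via its ``section-dual'' body $M^\wedge$, but unwinding that lemma one finds exactly your computation.

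The one substantive difference is in the slice estimate. You use the Brunn--Minkowski inequality in its ``central slice $\geq$ average slice'' form, which after the width bound reduces to $(d-1)^{d-1}\geq d$ and recovers Mahler's constant $d-1$. The paper instead bounds the central slice of a cube from below by the constant $\Delta_d$ of \eqref{eq:Delta_d_definition} (Corollary~\ref{cor:cube_wedge_cube}), which by Vaaler's theorem satisfies $\Delta_d^{-1}\leq\sqrt d$; this yields the sharper factor $\Delta_d^{-1/(d-1)}\to1$ of Theorem~\ref{t:my_transference}. So your route is slightly coarser but entirely adequate for Theorem~\ref{t:mahler_transference}, and has the virtue of avoiding any appeal to cube-slicing theorems. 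Your final remark about the possibility $\vec y'=0$ is a genuine loose end in the formulation with $0<|\vec y|_\infty$, but note that if $V<1$ the degenerate case cannot occur, and if $V\geq1$ one may simply take $\vec y$ a standard basis vector; this matches the paper's tacit handling via the reformulation in Theorem~\ref{t:mahler_transference_reformulated}.
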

  
  Notice that if we define numbers $\beta_1$ and $\beta_2$ by the equalities $U=X^{-\beta_1}$, $V=Y^{-\beta_2}$, then it follows from \eqref{eq:mahler_transference_hypothesis} and \eqref{eq:mahler_transference_statement} that
  \[ \beta_2=\frac{n\beta_1+(n-1)-\varkappa}{(m-1)\beta_1+m+\varkappa}\,,\qquad
     \text{ where }\varkappa=\frac{(d-1)\ln(d-1)}{\ln X}\,, \]
  which obviously implies Theorem \ref{t:dyson_transference}.

  In this paper we improve Theorem \ref{t:mahler_transference}. Namely, we substitute the factor $d-1$ by a smaller factor tending to $1$ as $d\to\infty$ (see Theorem \ref{t:my_transference} below). Of course, it does not affect the Diophantine exponents.

  \subsection{Arbitrary functions}

  Considering only exponents when investigating the asymptotic behaviour of some quantity does not allow to detect any intermediate growth. It appears, however, that the methods used in the current paper are delicate enough to work not only with the Diophantine exponents, but with arbitrary functions satisfying some natural growth conditions. In this Section we formulate the corresponding statements (Theorems \ref{t:my_inequalities_f} and \ref{t:loranoyadenie_f}) and derive from them Theorems \ref{t:my_inequalities} and \ref{t:loranoyadenie}. We also give Corollaries \ref{cor:log} and \ref{cor:exp_log} as examples of how to detect intermediate growth.


  Let $\psi:\R_+\to\R_+$ be an arbitrary function. By analogy with Definitions \ref{def:beta}, \ref{def:alpha}, we give the following

  \begin{definition} \label{def:beta_f}
    We call $\Theta$ \emph{individually $\psi$-approximable} (or, simply, \emph{$\psi$-approximable}), if there are infinitely many $\vec x\in\Z^m$, $\vec y\in\Z^n$ satisfying the inequality
    \[ |\Theta\vec x-\vec y|_\infty\leq\psi(|\vec x|_\infty). \]
  \end{definition}

  \begin{definition} \label{def:alpha_f}
    We call $\Theta$ \emph{uniformly $\psi$-approximable}, if for each $t$ large enough there are $\vec x\in\Z^m$, $\vec y\in\Z^n$ satisfying the inequalities
    \begin{equation*} 
      0<|\vec x|_\infty\leq t,\qquad|\Theta\vec x-\vec y|_\infty\leq \psi(t).
    \end{equation*}
  \end{definition}

  Clearly, $\beta(\Theta)$ (resp. $\alpha(\Theta)$) equals the supremum of real numbers $\gamma$, such that $\Theta$ is individually (resp. uniformly) $t^{-\gamma}$-approximable.

  
  The statements we are about to give all involve the concept of the inverse function. In case $\psi$ is invertible (as a map from $\R_+$ to $\R_+$) we shall denote the corresponding inverse function by $\psi^-$. Then $\psi^-(\psi(t))=\psi(\psi^-(t))=t$ for all $t>0$. 
  
  \begin{theorem} \label{t:my_inequalities_f}
    Let $\phi:\R_+\to\R_+$ be an arbitrary function, such that
    \begin{equation} \label{eq:phi_hypothesis}
      t^n\phi(t)^{m-1}\to+\infty\quad\text{ as }\quad t\to+\infty.
    \end{equation}
    Let $\psi:\R_+\to\R_+$ be an arbitrary invertible decreasing function, such that for all $t$ large enough one of the following two conditions holds:
    
    $(i)$ $t\psi(t)$ is non-increasing and satisfies the inequality
    \begin{equation} \label{eq:phi_psi_hypothesis}
      \psi(\Delta_dt^n\phi(t)^{m-1})\leq(c\Delta_dt)^{-1},
    \end{equation}
    
    $(ii)$ $t\psi(t)$ is non-decreasing and satisfies the inequality
    \begin{equation} \label{eq:phi-_psi-_hypothesis}
      \psi^-(\Delta_dt^{n-1}\phi(t)^m)\leq(c\Delta_d\phi(t))^{-1},
    \end{equation}
    where $d=n+m$, $c=\sqrt{2d(d-1)}$, and $\Delta_d$ is defined by \eqref{eq:Delta_d_definition}\footnote{Notice that due to Corollary \ref{cor:1_and_sqrt_2} from Section \ref{sec:section_dual} we have $\Delta_d$ sandwiched between $\sqrt{1/d}$ and $\sqrt{2/d}$.}.
    
    Let $\Theta$ be uniformly $\psi$-approximable. Then $\tr\Theta$ is uniformly $\phi$-approximable.
  \end{theorem}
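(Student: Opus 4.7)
The plan is to recast both the $\psi$-approximation hypothesis for $\Theta$ and the desired $\phi$-approximability of $\tr\Theta$ as statements about non-vanishing of integer points in centrally symmetric parallelepipeds in $\R^d$, and to pass from one to the other via a polar-duality transference lemma of the kind developed in Section~\ref{sec:section_dual} (quantified by $\Delta_d$ and $c=\sqrt{2d(d-1)}$). Write
\[
  \Pi(s)=\{(\vec x,\vec y)\in\R^m\times\R^n:|\vec x|_\infty\leq s,\ |\Theta\vec x-\vec y|_\infty\leq\psi(s)\},
\]
\[
  \Pi^\star(t)=\{(\vec u,\vec v)\in\R^m\times\R^n:|\tr\Theta\vec v-\vec u|_\infty\leq\phi(t),\ |\vec v|_\infty\leq t\}.
\]
By hypothesis $\Pi(s)\cap(\Z^d\setminus\{0\})\neq\emptyset$ for every sufficiently large $s$; the conclusion is that $\Pi^\star(t)$ contains an integer point $(\vec u,\vec v)$ with $\vec v\neq 0$ for every sufficiently large $t$.

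The core of the argument is a polar-duality input. After the unimodular shear $(\vec x,\vec y)\mapsto(\vec x,\vec y-\Theta\vec x)$, the body $\Pi(s)$ becomes an axis-aligned box with side-lengths $s$ (in the first $m$ coordinates) and $\psi(s)$ (in the last $n$). Its polar in the dual coordinates, composed with the reverse shear transformed by $\Theta\mapsto\tr\Theta$, is again a ``twisted box'' whose constraints take the same form as those defining $\Pi^\star$, but with $\vec u$-side scaled by $(c\Delta_d s)^{-1}$ and $\vec v$-side scaled by $(c\Delta_d\psi(s))^{-1}$. The duality lemma then promotes a nonzero integer point of $\Pi(s)$ to a nonzero integer point of this polar box.

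The two cases of the theorem are two parametrizations of $s$ as a function of~$t$. In case~(i), set $s=\Delta_d t^n\phi(t)^{m-1}$, which tends to infinity by \eqref{eq:phi_hypothesis}; a direct computation shows the $\vec u$-side of the polar box equals exactly $\phi(t)$, while the $\vec v$-side is $(c\Delta_d\psi(s))^{-1}\geq t$ by hypothesis \eqref{eq:phi_psi_hypothesis}. The monotonicity assumption that $t\psi(t)$ be non-increasing is exactly what guarantees $s(t)$ is a well-behaved (eventually monotone) parameter, so that the primal hypothesis can be applied at $s(t)$ for every large $t$. In case~(ii), one sets $s=(c\Delta_d\phi(t))^{-1}$; applying $\psi$ to both sides of \eqref{eq:phi-_psi-_hypothesis} (which is legal since $\psi$ is decreasing and invertible) gives the symmetric inequality $\psi(s)\leq\Delta_d t^{n-1}\phi(t)^m$, and the opposite monotonicity assumption on $t\psi(t)$ again ensures $s(t)$ is monotone. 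In both cases the extracted integer point $(\vec u,\vec v)$ automatically satisfies $\vec v\neq 0$: were $\vec v=0$, we would have $\vec u\in\Z^m$ with $|\vec u|_\infty\leq\phi(t)$, forcing $\vec u=0$ once $\phi(t)<1$, contradicting the non-vanishing produced by the duality lemma.

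The main obstacle I expect is the bookkeeping in the middle paragraph: threading the constants $\Delta_d$ and $c$ through the polar duality so that the side-length inequalities match \eqref{eq:phi_psi_hypothesis} and \eqref{eq:phi-_psi-_hypothesis} cleanly, rather than off by a factor depending on $d$. The split into cases~(i) and~(ii) is forced by the need to express the duality inequality in terms of either $\psi$ or $\psi^-$ --- the two monotonicity hypotheses on $t\psi(t)$ being the mildest conditions that allow one to convert a discrete family of successful approximations into a uniform (in $t$) statement about the transposed problem.
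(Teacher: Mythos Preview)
Your proposal rests on a ``duality lemma'' that does not exist in the form you need. You claim that a single nonzero integer point in $\Pi(s)$ (equivalently, in $M_{\psi(s),s}$) can be promoted to a nonzero integer point in a dual box with side-lengths $(c\Delta_d s)^{-1}$ and $(c\Delta_d\psi(s))^{-1}$. But polar duality for lattices relates $\lambda_1$ of a body to $\lambda_d$ of its polar, not to $\lambda_1$; and the paper's section-dual construction (Lemma~\ref{l:section_dual_at_work}) goes the \emph{opposite} way --- a point in $M^\wedge$ yields a point in $M$, not conversely. The only one-point transference available here is Theorem~\ref{t:my_transference}, which from $M_{U,X}$ produces a point in $\widehat M_{Y,V}$ with $Y\asymp (X^mU^{1-m})^{1/(d-1)}$, $V\asymp (X^{1-n}U^n)^{1/(d-1)}$. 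Plugging in $X=s$, $U=\psi(s)=s^{-\alpha}$ gives exactly Dyson's exponent $\frac{n\alpha+n-1}{(m-1)\alpha+m}$, which is strictly weaker than the bounds of Theorem~\ref{t:my_inequalities} that Theorem~\ref{t:my_inequalities_f} is meant to yield. (Your side-length computation is also off: with $s=\Delta_d t^n\phi(t)^{m-1}$ one gets $(c\Delta_d s)^{-1}=(c\Delta_d^2 t^n\phi(t)^{m-1})^{-1}$, which is not $\phi(t)$.)

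The missing idea is that the \emph{uniform} $\psi$-approximability must be used to produce \emph{two} non-collinear integer points, not one. In the paper's argument (Lemma~\ref{l:the_alphas_core}), one first checks whether the small box $M_{h^\ast,r^\ast}$ with $h^\ast=\Delta_d r^m h^{n-1}$, $r^\ast=\Delta_d r^{m-1}h^n$ already contains an integer point; if so, Theorem~\ref{t:my_transference} finishes. If not, one dilates this box until a first integer point $\vec v$ appears, and then --- using the uniform hypothesis at a nearby parameter --- locates a second, non-collinear point $\vec v_1$ or $\vec v_2$. The monotonicity of $t\psi(t)$ is used precisely here, to bound the product $\lambda_1\lambda_2$ of the relevant heights by $r^{m-1}h^{n-1}/c$; it has nothing to do with $s(t)$ being monotone. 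With two points in hand, Lemma~\ref{l:M_h_r_section} (a two-dimensional section argument combining Proposition~\ref{prop:covolumes}, Vaaler's theorem, and Minkowski) delivers the integer point in $\widehat M_{h,r}$. This two-point mechanism is what beats Dyson, and your single-point polar approach cannot reproduce it.
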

  
  To derive Theorem \ref{t:my_inequalities} from Theorem \ref{t:my_inequalities_f} let us set $\psi(t)=t^{-\delta}$, $\phi(t)=\varkappa t^{-\gamma}$, where $\delta\leq\alpha(\Theta)$ is a positive real number, however close to $\alpha(\Theta)$ (if $\alpha(\Theta)<+\infty$, then we can just set $\delta=\alpha(\Theta)$),
  \begin{equation} \label{eq:gamma}
    \gamma=
      \begin{cases}
        \dfrac{n-1}{m-\delta},\qquad\qquad\qquad\qquad\qquad\qquad \text{ if }\ \delta<1, \\ \vphantom{\frac{|}{}}
        \text{an arbitrarily large real number},\quad\text{ if }\ \delta=1,\ m=1, \\
        \dfrac{n-\delta^{-1}}{m-1},\qquad\qquad\qquad\qquad\qquad\ \quad\text{ if }\ \delta\geq1,\ m\neq1, 
      \end{cases}
  \end{equation}
  and
  \[ \varkappa=
     \begin{cases}
       \left(c^\delta\Delta_d^{\delta-1}\right)^\frac{1}{m-\delta},\quad\ \ \,\text{ if }\ \delta<1, \\ \vphantom{\frac{\big|}{|}}
       1,\qquad\qquad\qquad\quad\ \,\text{ if }\ \delta=1,\ m=1, \\
       \left(c\Delta_d^{1-\delta}\right)^\frac{1}{(m-1)\delta},\quad\ \text{ if }\ \delta\geq1,\ m\neq1.
     \end{cases} \]
  The relation \eqref{eq:phi_hypothesis} is easily verified to be true. Furthermore, $t\psi(t)=t^{1-\delta}$ is either non-increasing, or non-decreasing, depending on whether $\delta\geq1$ or $\delta\leq1$. Besides that, we have \eqref{eq:phi_psi_hypothesis} and \eqref{eq:phi-_psi-_hypothesis} valid with equalities instead of inequalities. Hence, taking into account that $\varkappa$ does not depend on $t$, we see that $\alpha(\tr\Theta)\geq\gamma$, which implies Theorem \ref{t:my_inequalities}.

  Slightly modifying the argument given above one can see that if we know $\Theta$ to have the functional order of uniform approximation logarithm times better than $t^{-\alpha(\Theta)}$, then almost the same can be said about $\tr\Theta$. It is formalized in the following

  \begin{corollary} \label{cor:log}
    Let $\alpha(\Theta),\alpha(\tr\Theta)<+\infty$ (which excludes the case $\alpha(\Theta)=m=1$) and let $\Theta$ be uniformly $(\ln t)^{-1}t^{-\alpha(\Theta)}$-approximable. Then $\tr\Theta$ is uniformly $g(t)t^{-\gamma}$-approximable, where
    \[ g(t)= 
        \begin{cases}
          \left(\gamma c^{-\alpha(\Theta)}\Delta_d^{1-\alpha(\Theta)}\ln t\right)^{-\frac{1}{m-\alpha(\Theta)}},
          \qquad\qquad\qquad\,\text{ if }\ \alpha(\Theta)<1, \\
          \vphantom{\frac{\Big|}{}}
          (1+\e)
          \left(\alpha(\Theta)^{-1}c^{-1}\Delta_d^{\alpha(\Theta)-1}\ln t\right)^{-\frac{1}{(m-1)\alpha(\Theta)}},
          \quad\text{ if }\ \alpha(\Theta)\geq1,
        \end{cases} \]
    \begin{equation*}
      \gamma=
        \begin{cases}
          \dfrac{n-1}{m-\alpha(\Theta)},\quad\ \ \text{ if }\ \alpha(\Theta)<1, \\ 
          \vphantom{\frac{\Big|}{}}
          \dfrac{n-\alpha(\Theta)^{-1}}{m-1},\quad\text{ if }\ \alpha(\Theta)\geq1,
        \end{cases}
    \end{equation*}
    $\e>0$ is however small and the constants $c$ and $\Delta_d$ are as in Theorem \ref{t:my_inequalities_f}.
  \end{corollary}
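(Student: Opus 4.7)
The plan is to apply Theorem \ref{t:my_inequalities_f} directly with $\psi(t)=(\ln t)^{-1}t^{-\alpha(\Theta)}$ and $\phi(t)=g(t)\,t^{-\gamma}$. The hypothesis on $\Theta$ says exactly that $\Theta$ is uniformly $\psi$-approximable, and the conclusion of Theorem \ref{t:my_inequalities_f} will then give uniform $\phi$-approximability of $\tr\Theta$, which is the desired statement. Writing $\alpha=\alpha(\Theta)$, one checks that $\psi$ is decreasing and invertible for $t$ large enough, and that $t\psi(t)=t^{1-\alpha}/\ln t$ is eventually non-decreasing when $\alpha<1$ (so condition $(ii)$ applies) and eventually non-increasing when $\alpha\geq 1$ (so condition $(i)$ applies); in the latter case the exclusion of $\alpha=m=1$ forces $m\neq 1$. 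The growth hypothesis \eqref{eq:phi_hypothesis} reduces to $t^n\phi(t)^{m-1}=g(t)^{m-1}t^\rho$ with $\rho=1/\alpha$ for $\alpha\geq 1$ and $\rho=(m+n-1-n\alpha)/(m-\alpha)>0$ for $\alpha<1$ (using $\alpha<1\leq (m+n-1)/n$), so it holds.

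The substantive step is the verification of the key inequality of Theorem \ref{t:my_inequalities_f}. In the case $\alpha\geq 1$, direct substitution reduces \eqref{eq:phi_psi_hypothesis} to
\[ c\Delta_d^{1-\alpha}g(t)^{-(m-1)\alpha}\leq\alpha^{-1}\ln t+(m-1)\ln g(t)+\ln\Delta_d. \]
Plugging in the prescribed $g(t)=(1+\e)(\alpha^{-1}c^{-1}\Delta_d^{\alpha-1}\ln t)^{-1/((m-1)\alpha)}$ makes the left side equal to $(1+\e)^{-(m-1)\alpha}\alpha^{-1}\ln t$, and the inequality becomes
\[ \bigl((1+\e)^{-(m-1)\alpha}-1\bigr)\alpha^{-1}\ln t\leq(m-1)\ln g(t)+\ln\Delta_d, \]
which holds for all $t$ sufficiently large, since the left side is a strictly negative multiple of $\ln t$ while the right side is only $O(\ln\ln t)$. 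The $(1+\e)$ slack is precisely what absorbs the $(m-1)\ln g(t)=O(\ln\ln t)$ perturbation.

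For $\alpha<1$, monotonicity of $\psi^-$ converts \eqref{eq:phi-_psi-_hypothesis} into $\Delta_d t^{n-1}\phi(t)^m\geq\psi\bigl((c\Delta_d\phi(t))^{-1}\bigr)$; combined with the identity $t^{n-1}\phi(t)^{m-\alpha}=g(t)^{m-\alpha}$ this becomes
\[ g(t)^{m-\alpha}\bigl(\gamma\ln t-\ln g(t)-\ln(c\Delta_d)\bigr)\geq c^\alpha\Delta_d^{\alpha-1}. \]
The choice $g(t)=(\gamma c^{-\alpha}\Delta_d^{1-\alpha}\ln t)^{-1/(m-\alpha)}$ makes $g(t)^{m-\alpha}\cdot\gamma\ln t$ equal to $c^\alpha\Delta_d^{\alpha-1}$; since $-\ln g(t)\to+\infty$, the additional logarithmic terms are eventually positive, and no $\e$-margin is needed in this case.

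The only genuine obstacle is the bookkeeping of the logarithmic lower-order terms in these two reductions: one has to check that in the case $\alpha\geq 1$ the negative $\ln t$ gain from $(1+\e)^{-(m-1)\alpha}<1$ really dominates the $\ln\ln t$ loss, and in the case $\alpha<1$ that the extra terms $-\ln g(t)-\ln(c\Delta_d)$ go the right way. Beyond this, the argument is a purely mechanical substitution into Theorem \ref{t:my_inequalities_f}.
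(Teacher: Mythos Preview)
Your proposal is correct and follows exactly the approach the paper intends: the paper itself says only that Corollary~\ref{cor:log} follows by ``slightly modifying the argument given above'', namely the derivation of Theorem~\ref{t:my_inequalities} from Theorem~\ref{t:my_inequalities_f} with $\psi(t)=t^{-\delta}$ replaced by $\psi(t)=(\ln t)^{-1}t^{-\alpha(\Theta)}$ and $\phi(t)=g(t)t^{-\gamma}$. Your case split according to the monotonicity of $t\psi(t)$, the reduction of \eqref{eq:phi_psi_hypothesis} and \eqref{eq:phi-_psi-_hypothesis} to the displayed logarithmic inequalities, and the observation that the $(1+\e)$-factor is needed precisely when the perturbation $(m-1)\ln g(t)=O(\ln\ln t)$ has the wrong sign, are all correct and constitute a more detailed verification than the paper provides.
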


  As was mentioned above, we always have the inequalities $\alpha(\Theta)\geq m/n$ and $\alpha(\tr\Theta)\geq n/m$. It is well known that $\alpha(\Theta)=m/n$ if and only if $\alpha(\tr\Theta)=n/m$ (it can also be seen from Theorem \ref{t:my_inequalities}). Hence, if $\alpha(\Theta)=m/n$ and the functional order of uniform approximation for $\Theta$ is $\ln t$ times better than $t^{-\alpha(\Theta)}$, then by Corollary \ref{cor:log} the functional order of uniform approximation for $\tr\Theta$ is $O(\ln^\delta t)$ times better than $t^{-\alpha(\tr\Theta)}$, where
  \[ \delta=\frac{n}{m(\max(n,m)-1)}\,. \]


%
  
  If $\alpha(\Theta)=+\infty$ (which can only happen if $m\neq1$), then by Theorem \ref{t:my_inequalities} we have $\alpha(\tr\Theta)\geq\frac{n}{m-1}$, but this does not mean that $\tr\Theta$ is uniformly $t^{-\frac{n}{m-1}}$-approximable, we can only conclude that for every $\e>0$ it is uniformly $t^{-\frac{n}{m-1}+\e}$-approximable. However, if we can estimate the functional order of approximation for $\Theta$, Theorem \ref{t:my_inequalities_f} gives more specific information. For instance, one can easily derive

  \begin{corollary} \label{cor:exp_log}
    Let $\Theta$ be uniformly $e^{-t}$-approximable. Then $\tr\Theta$ is uniformly $f(t)$-approximable, where
    \[ f(x)=\Delta_d^{-\frac{1}{m-1}}t^{-\frac{n}{m-1}}\ln(c\Delta_dt)^{\frac{1}{m-1}} \]
    and the constants $c$ and $\Delta_d$ are as in Theorem \ref{t:my_inequalities_f}.
  \end{corollary}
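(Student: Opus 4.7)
The plan is to invoke Theorem~\ref{t:my_inequalities_f} directly with $\psi(t)=e^{-t}$ and to choose $\phi$ so that the hypothesis \eqref{eq:phi_psi_hypothesis} is attained with equality. The function $\psi(t)=e^{-t}$ is strictly decreasing and invertible, and for $t>1$ the product $t\psi(t)=te^{-t}$ has derivative $(1-t)e^{-t}<0$ and is thus non-increasing. Hence for all $t$ large enough we are in case~$(i)$ of Theorem~\ref{t:my_inequalities_f}, and the task reduces to finding a $\phi$ that satisfies \eqref{eq:phi_hypothesis} and \eqref{eq:phi_psi_hypothesis}.

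Substituting $\psi(s)=e^{-s}$ into \eqref{eq:phi_psi_hypothesis} and taking logarithms converts it into the inequality
\[
\Delta_d\,t^n\phi(t)^{m-1}\geq\ln(c\Delta_d t),
\]
which is solved with equality by
\[
\phi(t)=\Delta_d^{-\frac{1}{m-1}}t^{-\frac{n}{m-1}}\ln(c\Delta_d t)^{\frac{1}{m-1}}=f(t).
\]
For this $\phi$ the growth hypothesis \eqref{eq:phi_hypothesis} becomes $t^n\phi(t)^{m-1}=\Delta_d^{-1}\ln(c\Delta_d t)$, which tends to $+\infty$. All hypotheses of Theorem~\ref{t:my_inequalities_f} being verified, the conclusion that $\tr\Theta$ is uniformly $\phi$-approximable is precisely the statement to be proved.

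There is essentially no obstacle beyond this bookkeeping: one must only check the sign of $(t\psi(t))'$ to select the correct branch of Theorem~\ref{t:my_inequalities_f} and then solve a one-variable inequality for $\phi$. The appearance of the logarithm in $f$ is exactly what one obtains when inverting the exponential $\psi$ through the relation \eqref{eq:phi_psi_hypothesis}.
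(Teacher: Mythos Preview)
Your proposal is correct and follows exactly the route the paper intends: the paper does not write out a proof of Corollary~\ref{cor:exp_log} but merely says it ``can easily be derived'' from Theorem~\ref{t:my_inequalities_f}, and your verification of hypothesis~$(i)$ with $\psi(t)=e^{-t}$ and $\phi=f$ is precisely that derivation. The only point worth noting is that $e^{-t}$ is not surjective onto $\R_+$, but its inverse $\psi^-(s)=-\ln s$ is defined on the range actually needed in Lemma~\ref{l:the_alphas_core}, so this causes no difficulty.
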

  
  Now let us turn to a functional analogue of Theorem \ref{t:loranoyadenie}.
  
  \begin{theorem} \label{t:loranoyadenie_f}
    Let $\phi,\psi:\R_+\to\R_+$ be arbitrary invertible decreasing functions, such that $\phi(t)\geq\psi(t)$ for all $t>0$.
    Set
    \[ \begin{array}{ll}
      f_1(t)=\left(ct^m\phi(t)\psi(t)^{1-m}\right)^\frac{1}{d-2}, & \quad
      f_{-1}(t)=\left(ct^{2-m}\phi^-(t)\psi^-(t)^{m-1}\right)^\frac{1}{d-2}, \\
      g_1(t)=\left(ct^{2-n}\phi(t)\psi(t)^{n-1}\right)^\frac{1}{d-2}, & \quad
      g_{-1}(t)=\left(ct^n\phi^-(t)\psi^-(t)^{1-n}\right)^\frac{1}{d-2}, \vphantom{^\frac{\big|}{}}
    \end{array} \]
    where $d=n+m$ and $c=\sqrt{2d(d-1)}$.

    Let $\Theta$ be $\psi$-approximable and uniformly $\phi$-approximable. Then the following two statements hold: 

    $(i)$ if $f_1$ is increasing and invertible, then $\tr\Theta$ is $(g_1\circ f_1^-)$-approximable;

    $(ii)$ if $f_{-1}$ is decreasing and invertible, then $\tr\Theta$ is $(g_{-1}\circ f_{-1}^-)$-approximable.
  \end{theorem}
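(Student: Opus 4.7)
The plan is to combine a Laurent--Bugeaud style dimension-reduction trick with a Mahler-type transference applied in the resulting lower-dimensional lattice. The individual $\psi$-approximation will supply a short primitive vector whose direction we quotient out, leaving a lattice of rank $d-1$; the uniform $\phi$-approximation then drives a Mahler transference in this reduced lattice, which explains the exponent $1/(d-2)$ appearing throughout $f_{\pm 1}$ and $g_{\pm 1}$.

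First, for each $t$ in the infinite set on which $\Theta$ is $\psi$-approximable I would choose a primitive integer vector $\vec z_0=(\vec x_0,\vec y_0)\in\Z^d$ with $|\vec x_0|_\infty\leq t$ and $|\Theta\vec x_0-\vec y_0|_\infty\leq\psi(t)$. Writing the standard parallelepiped
\[
\mathcal P(T,U)=\{(\vec x,\vec y)\in\R^d:\,|\vec x|_\infty\leq T,\ |\Theta\vec x-\vec y|_\infty\leq U\},
\]
we have $\vec z_0\in\mathcal P(t,\psi(t))$, while the uniform $\phi$-approximability gives $\mathcal P(T,\phi(T))\cap(\Z^d\setminus\{0\})\neq\varnothing$ for every large $T$. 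Both pieces of data are thus phrased as integer points in bodies of the same family, with $\vec z_0$ spanning a primitive rank-one sublattice, and $\Z^d/\Z\vec z_0$ a free abelian group of rank $d-1$.

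Next, at an auxiliary scale $T=T(t)$ to be optimised I would use the uniform approximation together with a Minkowski-type argument in the quotient $\Z^d/\Z\vec z_0$ to produce a second nonzero integer vector $\vec z_1\in\mathcal P(T,\phi(T))$ which is not a multiple of $\vec z_0$. With $\vec z_0$ and $\vec z_1$ in hand, I would then apply the strong Mahler transference (Theorem \ref{t:my_transference}) to the rank-$(d-1)$ quotient lattice: the Mahler exponent $1/(d-1)$ from the ambient statement becomes $1/(d-2)$ in the reduced setting, and the input parameters mix $t,\psi(t)$ (inherited from $\vec z_0$) with $T,\phi(T)$ (inherited from $\vec z_1$). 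A Mahler-dual lattice point yields an integer pair $(\vec x,\vec y)$ satisfying $|\vec y|_\infty\leq f_{\pm 1}(t)$ and $|\tr\Theta\vec y-\vec x|_\infty\leq g_{\pm 1}(t)$, the sign $\pm 1$ selecting whichever of the two auxiliary scalings (one dominated by $\phi$, the other by $\psi$) is consistent with the monotonicity hypothesis. The invertibility of $f_{\pm 1}$ then permits rewriting $g_{\pm 1}(t)$ as $(g_{\pm 1}\circ f_{\pm 1}^-)(|\vec y|_\infty)$, and letting $t$ run through the infinite $\psi$-sequence yields infinitely many distinct transposed approximations with $|\vec y|_\infty\to\infty$, as required.

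The main obstacle I expect is the Mahler-transference step itself: one has to track all the geometric constants (the factor $c=\sqrt{2d(d-1)}$ and the successive-minima correction $\Delta_d$ tuned in Theorem \ref{t:my_transference}) through the passage to $\Z^d/\Z\vec z_0$, and to choose the complementary $(d-1)$-dim sublattice and auxiliary scale $T(t)$ so that the Mahler-dual parallelepiped has exactly the dimensions $t^m\phi(t)\psi(t)^{1-m}$ and $t^{2-n}\phi(t)\psi(t)^{n-1}$ (and the corresponding inverse-function versions for case $(ii)$) appearing in $f_{\pm 1}$ and $g_{\pm 1}$. Arranging these exponents to be produced naturally by the reduced Mahler argument, rather than bounded only up to a power loss, is the technical heart of the proof; no genuinely new geometric idea is required beyond the one already contained in the proof of the refined Mahler theorem.
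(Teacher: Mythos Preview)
Your outline diverges from the paper's argument and carries a real gap at its core step.

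The paper does \emph{not} quotient by $\Z\vec z_0$ and then invoke Theorem~\ref{t:my_transference} in a rank-$(d-1)$ lattice. Instead, it treats the $\psi$-vector and the $\phi$-vector on an equal footing: for large $t$ (case $(i)$) it takes non-collinear $\vec v_1\in M_{\phi(t),t}$ and $\vec v_2\in M_{\psi(t),t}$, lets $\cL=\spanned_\R(\vec v_1,\vec v_2)$, and works directly in the $(d-2)$-dimensional orthogonal complement $\cL^\perp$. The exponent $1/(d-2)$ arises because one applies Minkowski's theorem to $\cL^\perp\cap\widehat M_{h,r}$ and bounds its $(d-2)$-dimensional volume from below via Vaaler's cube-section theorem, while bounding $\det(\cL^\perp\cap\Z^d)=\det(\cL\cap\Z^d)\leq|\vec v_1\wedge\vec v_2|$ from above by an elementary estimate (Lemma~\ref{l:cube_section}). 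This is packaged as Lemma~\ref{l:M_h_r_section}; Lemmas~\ref{l:the_alpha_beta_semicore_1} and~\ref{l:the_alpha_beta_semicore_2} then specialise the parameters to produce exactly $f_{\pm1}$ and $g_{\pm1}$. No transference theorem is used, and the constant $c=\sqrt{2d(d-1)}$ comes from the crude bound $|\vec z_1\wedge\vec z_2|\leq\sqrt{d(d-1)/2}\max|V^{ij}|$, not from $\Delta_d$.

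Your proposed route---quotient by one vector, then apply Mahler transference in $\Z^d/\Z\vec z_0$---is not well-posed as written. Theorem~\ref{t:my_transference} is a statement about the specific pair of parallelepipeds $M_{U,X}$ and $\widehat M_{Y,V}$ in $\R^d$, whose definitions are tied to the matrix $\Theta$; after quotienting by an arbitrary primitive $\vec z_0$ there is no analogue of $\Theta$ acting on the quotient, so there is nothing to which the transference theorem applies. One could try to project orthogonally to $\vec z_0$ instead of quotienting, but then $\Z^d$ does not project to a lattice commensurable with a standard $\Z^{d-1}$, and the projected images of $M$ and $\widehat M$ are no longer of the required product form. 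You yourself flag this as ``the technical heart of the proof'' without resolving it; in fact it is not a bookkeeping obstacle but a structural one. The paper's two-vector/orthogonal-complement approach sidesteps it entirely and gives the stated formulas with no loss.
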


  To derive Theorem \ref{t:loranoyadenie} from Theorem \ref{t:loranoyadenie_f} let us set $\psi(t)=t^{-\delta}$, $\phi(t)=t^{-\gamma}$, where $\delta$ and $\gamma$ are positive real numbers, however close to $\beta(\Theta)$ and $\alpha(\Theta)$, respectively, such that $\delta\leq\beta(\Theta)$, $\gamma\leq\alpha(\Theta)$, $\gamma\neq1$.
  
  Then for $k=\pm1$
  \begin{align*}
    f_k(t)= & \left(ct^{k(m-1)(1+\delta^k)+1-\gamma^k}\right)^\frac{1}{d-2}, \\
    g_k(t)= & \left(ct^{-k(n-1)(1+\delta^k)+1-\gamma^k}\right)^\frac{1}{d-2}.
  \end{align*}
  The only case when $f_k(t)$ is not invertible, is the case $m=1$, $\gamma=1$, which we have excluded by the choice of $\gamma$. Thus, $f_k(t)$ is invertible, and it can be easily verified that $f_1$ is increasing, $f_{-1}$ is decreasing, and
  \[ g_k(f_k^-(t))=c^{\textstyle\frac{k(n-1)(1+\delta^k)+\gamma^k}{d-2}}
                   t^{-\textstyle\frac{(n-1)(1+\delta^k)-k(1-\gamma^k)}
                                     {(m-1)(1+\delta^k)+k(1-\gamma^k)}}. \]
  Hence, taking into account that $c$ does not depend on $t$, we see that 
  \[ \beta(\tr\Theta)\geq\frac{(n-1)(1+\delta^k)-k(1-\gamma^k)}{(m-1)(1+\delta^k)+k(1-\gamma^k)}, \]
  which implies \eqref{eq:loranoyadenie_2} and \eqref{eq:loranoyadenie_3}.
  
  Notice also, that in case $\alpha(\Theta),\beta(\Theta)<+\infty$ and $\alpha(\Theta)\neq1$ we could have simply set $\delta=\beta(\Theta)$, $\gamma=\alpha(\Theta)$.
  
  As for \eqref{eq:loranoyadenie_1}, it does not need proof, for it is the very statement of Dyson's Theorem \ref{t:dyson_transference}. Thus, Theorem \ref{t:loranoyadenie} indeed follows from Theorem \ref{t:loranoyadenie_f}.

  The rest of the paper is organized as follows. Sections \ref{sec:d_dimensional}, \ref{sec:exterior}, \ref{sec:section_dual} and \ref{sec:the_lemma} are devoted to the description of the main constructions lying in the basis of the proofs, in Section \ref{sec:transference} we improve Theorem \ref{t:mahler_transference}, in Sections \ref{sec:loranoyadenie} and \ref{sec:uniform_exponents} we prove Theorems \ref{t:loranoyadenie_f} and Theorem \ref{t:my_inequalities_f}, respectively, and in Section \ref{sec:3D} we prove a refined $3$-dimensional version of Theorem \ref{t:my_inequalities_f} with better constants and compare it with an analogous theorem by Jarn\'{\i}k.

  To finish this Section we notice that Definitions \ref{def:beta}, \ref{def:alpha}, \ref{def:beta_f}, \ref{def:alpha_f}, as well as Theorem \ref{t:mahler_transference}, are given in terms of the sup-norms in $\R^n$ and $\R^m$. However, our results are in the essence valid for arbitrary norms in these spaces. The choice of particular norms affects only some of the constants and does not affect any of the exponents.

  \section{From $\R^n$ and $\R^m$ to $\R^{n+m}$} \label{sec:d_dimensional}

  Let us set $d=n+m$. Given $\vec x\in\R^m$, $\vec y\in\R^n$, we shall write $\vec z=(\vec x,\vec y)\in\R^d$ , and vice versa, given $\vec z\in\R^d$, we shall denote its first $m$ coordinates as $\vec x$ and the last $n$ ones as $\vec y$. This naturally embeds the system \eqref{eq:the_system} into $\R^d$.

  Let us denote by $\pmb\ell_1,\ldots,\pmb\ell_m,\vec e_{m+1},\ldots,\vec e_d$ the columns of the matrix
  \[ T=
     \begin{pmatrix}
       E_m & 0 \\
       -\Theta & E_n
     \end{pmatrix}, \]
  where $E_m$ and $E_n$ are the corresponding unity matrices, and by $\vec e_1,\ldots,\vec e_m,\pmb\ell_{m+1},\ldots,\pmb\ell_d$ the columns of
  \[ T'=
     \begin{pmatrix}
       E_m & \tr\Theta \\
       0 & E_n
     \end{pmatrix}. \]
  We obviously have $T\tr{(T')}=E_d$, so the bases $\pmb\ell_1,\ldots,\pmb\ell_m,\vec e_{m+1},\ldots,\vec e_d$ and $\vec e_1,\ldots,\vec e_m,\pmb\ell_{m+1},\ldots,\pmb\ell_d$ are dual. Therefore, the subspaces
  \[ \cL^m=\spanned_\R(\pmb\ell_1,\ldots,\pmb\ell_m),\qquad
     \cL^n=\spanned_\R(\pmb\ell_{m+1},\ldots,\pmb\ell_d) \]
  are orthogonal. More than that, $\cL^m=(\cL^n)^\perp$ and
  \[ \cL^m=\{ \vec z\in\R^d \,|\, \langle\pmb\ell_{m+i},\vec z\rangle=0,\ i=1,\ldots,n \},\qquad
     \cL^n=\{ \vec z\in\R^d \,|\, \langle\pmb\ell_j,\vec z\rangle=0,\ j=1,\ldots,m \}. \]
  Thus, $\cL^m$ coincides with the space of solutions of the system $\Theta\vec x=-\vec y$, and $\cL^n$ coincides with that of the system $\tr\Theta\vec y=\vec x$. It can be easily verified that changing the sign in \eqref{eq:the_system} while preserving it for the transposed system does not affect any of the definitions and theorems given in Section \ref{sec:intro}.

  For all positive $h$ and $r$ let us define parallelepipeds
  \begin{align*}
    M_{h,r}=\Big\{ \vec z\in\R^d \,\Big|\,
    |\langle\pmb\ell_{m+i},\vec z\rangle| & \leq h,\ \ i=1,\ldots,n, \\
    |\langle\vec e_j,\vec z\rangle| & \leq r,\ \ j=1,\ldots,m \Big\}
  \end{align*}
  and
  \begin{align*}
    \widehat M_{h,r}=\Big\{ \vec z\in\R^d \,\Big|\,
    |\langle\vec e_{m+i},\vec z\rangle| & \leq h,\ \ i=1,\ldots,n, \\
    |\langle\pmb\ell_j,\vec z\rangle| & \leq r,\ \ j=1,\ldots,m \Big\}.
  \end{align*}
  In these terms Definitions \ref{def:beta_f} and \ref{def:alpha_f} for $\Theta$ and $\tr\Theta$ can be reformulated as follows.

  \begin{proposition} \label{prop:definitions_reformulated}
    Given an arbitrary function $\psi:\R_+\to\R_+$, the following statements hold: \\
    $(i)$ $\Theta$ is $\psi$-approximable, if there are $t\in\R$, however large, such that
    \begin{equation} \label{eq:individual_approximability_reformulated}
      M_{\psi(t),t}\cap\Z^d\backslash\{\vec 0\}\neq\varnothing.
    \end{equation}
    $(ii)$ $\Theta$ is uniformly $\psi$-approximable, if \eqref{eq:individual_approximability_reformulated} holds for all $t\in\R$ large enough. \\
    $(iii)$ $\tr\Theta$ is $\psi$-approximable, if there are $t\in\R$, however large, such that
    \begin{equation} \label{eq:uniformal_approximability_reformulated}
      \widehat M_{t,\psi(t)}\cap\Z^d\backslash\{\vec 0\}\neq\varnothing.
    \end{equation}
    $(iv)$ $\tr\Theta$ is uniformly $\psi$-approximable, if \eqref{eq:uniformal_approximability_reformulated} holds for all $t\in\R$ large enough.
%
  \end{proposition}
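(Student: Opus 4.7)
The plan is to reduce the proposition to a direct inspection of the linear forms used in the definitions of the parallelepipeds $M_{h,r}$ and $\widehat M_{h,r}$. Reading off $\pmb\ell_{m+i}=(\tr\Theta\vec e_i,\vec e_i)$ from the $(m+i)$-th column of $T'$ and $\pmb\ell_j=(\vec e_j,-\Theta\vec e_j)$ from the $j$-th column of $T$, a one-line computation in the splitting $\vec z=(\vec x,\vec y)\in\R^m\times\R^n$ gives
\[
\langle\pmb\ell_{m+i},\vec z\rangle=(\Theta\vec x+\vec y)_i,\qquad \langle\pmb\ell_j,\vec z\rangle=x_j-(\tr\Theta\vec y)_j,
\]
while $\langle\vec e_j,\vec z\rangle=x_j$ and $\langle\vec e_{m+i},\vec z\rangle=y_i$ are tautological. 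Substituting these identities into the definitions of $M_{h,r}$ and $\widehat M_{h,r}$ yields at once the equivalences
\[
(\vec x,\vec y)\in M_{\psi(t),t}\ \Longleftrightarrow\ |\vec x|_\infty\leq t\ \text{ and }\ |\Theta\vec x+\vec y|_\infty\leq\psi(t),
\]
\[
(\vec x,\vec y)\in\widehat M_{t,\psi(t)}\ \Longleftrightarrow\ |\vec y|_\infty\leq t\ \text{ and }\ |\tr\Theta\vec y-\vec x|_\infty\leq\psi(t).
\]

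For part $(i)$, if $\Theta$ is $\psi$-approximable, one obtains an infinite family of integer pairs $(\vec x,\vec y)$ with $|\Theta\vec x-\vec y|_\infty\leq\psi(|\vec x|_\infty)$; this necessarily forces $|\vec x|_\infty$ to take arbitrarily large values, and for each such pair setting $t=|\vec x|_\infty$ places $(\vec x,-\vec y)$ in $M_{\psi(t),t}\cap(\Z^d\setminus\{\vec 0\})$ for arbitrarily large $t$ (the sign flip $\vec y\mapsto-\vec y$ is harmless, as the remark at the end of Section \ref{sec:intro} asserts that negating the sign in \eqref{eq:the_system} does not affect any of the notions involved). The converse direction is equally mechanical: a nonzero integer point in $M_{\psi(t),t}$ with $t$ large yields, after the same sign flip, an integer pair satisfying the approximation inequality, and letting $t$ range over an unbounded set produces the required infinite family.

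Parts $(ii)$ and $(iv)$ are the same equivalences with ``arbitrarily large $t$'' replaced by ``every sufficiently large $t$,'' and parts $(iii)$ and $(iv)$ are obtained from parts $(i)$ and $(ii)$ by swapping $\vec x$ with $\vec y$, interchanging the roles of $\Theta$ and $\tr\Theta$, and replacing $M$ by $\widehat M$ (equivalently, by using the second equivalence displayed above in place of the first). The only step demanding any attention is bookkeeping: matching the rows of $T$ and $T'$ to their named columns, aligning the argument $t$ of $\psi$ with $|\vec x|_\infty$ in parts $(i)$--$(ii)$ versus $|\vec y|_\infty$ in parts $(iii)$--$(iv)$, and invoking the sign-flip remark to harmonize signs between the approximation inequalities and those built into $M_{h,r}$ and $\widehat M_{h,r}$.
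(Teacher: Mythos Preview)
Your argument is correct and is exactly the verification one would write; the paper itself offers no proof of this proposition, treating it as an immediate unpacking of the definitions of $M_{h,r}$ and $\widehat M_{h,r}$ once the columns $\pmb\ell_j$, $\pmb\ell_{m+i}$ have been written down. One small slip: the remark you invoke about the harmless sign change in \eqref{eq:the_system} is not at the end of Section~\ref{sec:intro} but a few lines above the proposition in Section~\ref{sec:d_dimensional}.
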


  Analogically, Theorem \ref{t:mahler_transference} turns into

  \setcounter{classic}{\value{classic}-1}

  \begin{classic_prime} \label{t:mahler_transference_reformulated}
    If
    \[ M_{\scriptscriptstyle U,X}\cap\Z^d\backslash\{\vec 0\}\neq\varnothing, \]
    then
    \[ \widehat M_{\scriptscriptstyle Y,V}\cap\Z^d\backslash\{\vec 0\}\neq\varnothing, \]
    where
    \[ Y=(d-1)\big(X^mU^{1-m}\big)^{\frac{1}{d-1}},\quad
       V=(d-1)\big(X^{1-n}U^n\big)^{\frac{1}{d-1}}. \]
%
  \end{classic_prime}

  Preserving the essence, this point of view gives us an interpretation of the problem in terms of approaching to a subspace and to its orthogonal complement by integer points. Such a setting is classical and allows using many powerful techniques. One of the main tools here is the following observation (see Theorem 1 of Chapter VII, Section 3 of \cite{hodge_pedoe}):

  \begin{proposition} \label{prop:plucker_and_dual_plucker}
    Let $\cL$ be a $k$-dimensional subspace of $\R^d$ and let $\vec v_1,\ldots,\vec v_d$ be linearly independent vectors, such that $\vec v_1,\ldots,\vec v_k\in\cL$, $\vec v_{k+1},\ldots,\vec v_d\in\cL^\perp$. Let also $A\in\Gl_d(\R)$. Then $(A^\ast)^{-1}\cL^\perp=(A\cL)^\perp$ and
    \[ \frac{|\vec v_1\wedge\ldots\wedge\vec v_k|}{|\vec v_{k+1}\wedge\ldots\wedge\vec v_d|}=(\det A)^{-1}
       \frac{|A\vec v_1\wedge\ldots\wedge A\vec v_k|}{|(A^\ast)^{-1}\vec v_{k+1}\wedge\ldots
       \wedge(A^\ast)^{-1}\vec v_d|}\,. \]
  \end{proposition}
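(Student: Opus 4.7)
The first assertion $(A^\ast)^{-1}\cL^\perp=(A\cL)^\perp$ is a one-line verification: the adjoint identity gives $\langle(A^\ast)^{-1}\vec w,A\vec v\rangle=\langle\vec w,\vec v\rangle$ for all $\vec v,\vec w$, so the pairing vanishes whenever $\vec v\in\cL$ and $\vec w\in\cL^\perp$; the inclusion $(A^\ast)^{-1}\cL^\perp\subseteq(A\cL)^\perp$ follows at once, and becomes equality by a dimension count.

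For the wedge-product identity my plan is to rewrite everything in orthonormal bases of $\cL$ and $\cL^\perp$. Fix orthonormal bases $\vec u_1,\ldots,\vec u_k$ of $\cL$ and $\vec u_{k+1},\ldots,\vec u_d$ of $\cL^\perp$, assemble them into $d\times k$ and $d\times(d-k)$ matrices $U_1$ and $U_2$, and write the two blocks of column vectors as $U_1P$ and $U_2Q$ with $P,Q$ square and invertible. Orthonormality gives $|\vec v_1\wedge\ldots\wedge\vec v_k|=|\det P|$ and $|\vec v_{k+1}\wedge\ldots\wedge\vec v_d|=|\det Q|$, while a direct Gram-matrix computation yields
\[ |A\vec v_1\wedge\ldots\wedge A\vec v_k|^2=(\det P)^2\det(U_1^\ast SU_1),\qquad |(A^\ast)^{-1}\vec v_{k+1}\wedge\ldots\wedge(A^\ast)^{-1}\vec v_d|^2=(\det Q)^2\det(U_2^\ast S^{-1}U_2), \]
with $S=A^\ast A$. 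Substituting into the claim, everything cancels apart from the single algebraic identity
\[ \det(U_1^\ast SU_1)=\det S\cdot\det(U_2^\ast S^{-1}U_2), \]
after taking square roots and using $\sqrt{\det S}=|\det A|$.

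I expect this algebraic identity to be the only real (though very short) obstacle, and it should fall out of Schur's complement formula almost for free. Indeed, the matrix $U=(U_1\mid U_2)$ is orthogonal, so $U^\ast SU$ has determinant $\det S$ and inverse $U^\ast S^{-1}U$. The $(1,1)$-block of $U^\ast SU$ is $U_1^\ast SU_1$, and the $(2,2)$-block of the inverse is $U_2^\ast S^{-1}U_2$; by the standard block-inversion rule the latter is the inverse of the Schur complement of $U_1^\ast SU_1$ inside $U^\ast SU$. The block-determinant formula then yields $\det S=\det(U^\ast SU)=\det(U_1^\ast SU_1)/\det(U_2^\ast S^{-1}U_2)$, which is exactly the identity needed.

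A final bookkeeping remark: the factor $(\det A)^{-1}$ in the statement must be read as $|\det A|^{-1}$, since both sides of the claimed equality are quotients of non-negative quantities; the sign ambiguity is absorbed into the orientation choice for the bases of $\cL$ and $\cL^\perp$ and is immaterial for the intended applications to volumes of parallelepipeds.
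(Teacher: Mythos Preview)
Your argument is correct. The paper, however, does not prove this proposition at all: it is stated as a known fact with a citation to Hodge--Pedoe (Theorem~1 of Chapter~VII, \S3), so there is no ``paper's own proof'' to compare against. What you have supplied is a clean self-contained argument---the reduction to orthonormal bases strips away the $P,Q$ factors, and the Schur-complement step $\det(U^\ast SU)=\det(U_1^\ast SU_1)\cdot\det(U_2^\ast S^{-1}U_2)^{-1}$ is exactly the right identity, valid because $S=A^\ast A$ is positive definite (so the $(1,1)$ block is invertible and the complement makes sense). Your closing remark about $|\det A|^{-1}$ is also well taken: the paper's $(\det A)^{-1}$ is a harmless abuse, since the identity is only ever applied to ratios of non-negative volumes.
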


  Here $|\vec v_1\wedge\ldots\wedge\vec v_k|$ denotes the Euclidean norm of $\vec v_1\wedge\ldots\wedge\vec v_k\in\wedge^k(\R^d)$, the latter being a Euclidean $\binom dk$-dimensional space with a natural orthonormal basis $\{\vec e_{i_1}\wedge\ldots\wedge\vec e_{i_k}\}$. Due to the Cauchy-Binet formula, $|\vec v_1\wedge\ldots\wedge\vec v_k|$ is equal to the (non-oriented) $k$-dimensional volume of the parallelepiped spanned by $\vec v_1,\ldots,\vec v_k$, i.e.
  \begin{equation} \label{eq:cauchy_binet}
    |\vec v_1\wedge\ldots\wedge\vec v_k|=
    \det(\langle\vec v_i,\vec v_j\rangle)^{1/2}.
  \end{equation}

  Notice that in the basis $\vec e_1,\ldots,\vec e_d$ we have $A^\ast=\tr A$, that is in this basis $(A^\ast)^{-1}$ coincides with the cofactor matrix of $A$. Taking into account that $T'$ is exactly the cofactor matrix of $T$, we see that given two orthogonal subspaces of $\R^d$ described with the help of $\pmb\ell_1,\ldots,\pmb\ell_d$, we can apply Proposition \ref{prop:plucker_and_dual_plucker} to get orthogonal subspaces described with the help of $\vec e_1,\ldots,\vec e_d$, with certain information about volumes in these subspaces preserved.

  \section{Determinants of orthogonal integer lattices} \label{sec:exterior}

%

  For each lattice $\La\subset\R^d$ we denote by $\det\La$ its determinant, or its covolume. That is, if $\vec v_1,\ldots,\vec v_k$ is any basis of $\La$, then $\det\La$ is equal to the $k$-dimensional volume of the parallelepiped spanned by $\vec v_1,\ldots,\vec v_k$, which in its turn is equal to $|\vec v_1\wedge\ldots\wedge\vec v_k|$, due to \eqref{eq:cauchy_binet}.

  The following Proposition \ref{prop:covolumes} seems to be classical, but we decided to give it with proof since we didn't find one in the literature.

  \begin{proposition} \label{prop:covolumes}
    Let $\cL$ be a $k$-dimensional subspace of $\R^d$, such that the lattice $\La=\cL\cap\Z^d$ has rank $k$. Set $\La^\perp=\cL^\perp\cap\Z^d$. Then
    \[ \det\La=\det\La^\perp. \]
  \end{proposition}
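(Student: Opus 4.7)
The plan is to realize both $\det\La$ and $\det\La^\perp$ as volumes of parallelepipeds that sit inside a common unimodular framework. The key point is that the hypothesis $\La=\cL\cap\Z^d$ is exactly primitivity of $\La$ in $\Z^d$: the quotient $\Z^d/\La$ is torsion-free, so any $\Z$-basis $\vec v_1,\ldots,\vec v_k$ of $\La$ can be extended to a full $\Z$-basis $\vec v_1,\ldots,\vec v_k,\vec w_1,\ldots,\vec w_{d-k}$ of $\Z^d$. Let $B\in\Gl_d(\Z)$ be the matrix with these vectors as columns; then $|\det B|=1$.

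First I would pass to the Euclidean-dual basis $\vec f_1,\ldots,\vec f_d$, characterized by $\langle\vec f_j,\vec v_i\rangle=\delta_{ij}$ for $i\leq k$ and $\langle\vec f_j,\vec w_{i-k}\rangle=\delta_{ij}$ for $i>k$. The corresponding matrix is $(\tr B)^{-1}\in\Gl_d(\Z)$, so these vectors also lie in $\Z^d$. By construction $\vec f_{k+1},\ldots,\vec f_d$ annihilate every $\vec v_j$ and hence belong to $\cL^\perp$. A short check shows they actually form a $\Z$-basis of $\La^\perp$: any $\vec u\in\La^\perp\subset\Z^d$ expands uniquely in the integer basis $\vec f_1,\ldots,\vec f_d$, and the orthogonality conditions $\langle\vec u,\vec v_j\rangle=0$ force the first $k$ coefficients of that expansion to vanish.

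It then remains to match the two volumes. Let $\pi\colon\R^d\to\cL^\perp$ be the orthogonal projection. Since each $\vec f_{k+i}$ already lies in $\cL^\perp$, one has $\langle\vec f_{k+i},\vec w_j\rangle=\langle\vec f_{k+i},\pi(\vec w_j)\rangle=\delta_{ij}$, so $\vec f_{k+1},\ldots,\vec f_d$ and $\pi(\vec w_1),\ldots,\pi(\vec w_{d-k})$ are mutually dual bases of the Euclidean space $\cL^\perp$; in particular the product of their $(d-k)$-dimensional volumes equals $1$. On the other hand, decomposing the $d$-dimensional volume of the parallelepiped spanned by $\vec v_1,\ldots,\vec v_k,\vec w_1,\ldots,\vec w_{d-k}$ into its component along $\cL$ and its component along $\cL^\perp$ yields
\[ |\vec v_1\wedge\ldots\wedge\vec v_k|\cdot|\pi(\vec w_1)\wedge\ldots\wedge\pi(\vec w_{d-k})|=|\det B|=1. \]
Dividing the two identities gives $|\vec v_1\wedge\ldots\wedge\vec v_k|=|\vec f_{k+1}\wedge\ldots\wedge\vec f_d|$, i.e.\ $\det\La=\det\La^\perp$.

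The only genuinely nontrivial input is the extension step in the first paragraph, namely that a primitive sublattice of $\Z^d$ admits a completion to a $\Z$-basis of $\Z^d$; I would record this once as the content of the Smith normal form theorem applied to an integer matrix whose column lattice is $\La$, and use it as the single structural fact feeding the argument. Everything else reduces to manipulating Gram determinants and exploiting duality of bases in a Euclidean subspace, and no further combinatorics with Pl\"ucker coordinates is required.
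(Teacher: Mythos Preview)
Your argument is correct, and it follows a genuinely different route from the paper's. The paper works with Pl\"ucker coordinates: it writes $\vec v_1\wedge\ldots\wedge\vec v_k$ and $\vec v_{k+1}\wedge\ldots\wedge\vec v_d$ in the standard bases of $\wedge^k(\R^d)$ and $\wedge^{d-k}(\R^d)$, observes that each family of coefficients is a coprime integer tuple (because each basis is primitive), and then invokes the fact (from Hodge--Pedoe) that the Pl\"ucker coordinates of $\cL$ and $\cL^\perp$ are proportional; coprimality forces them to match in absolute value, whence the norms agree. Your proof instead uses the completion of a primitive sublattice to a $\Z$-basis (Smith normal form), constructs $\La^\perp$ explicitly as the span of the last $d-k$ dual basis vectors, and finishes with a clean volume factorization along $\cL\oplus\cL^\perp$ together with the duality of $\{\vec f_{k+i}\}$ and $\{\pi(\vec w_j)\}$ inside $\cL^\perp$. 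Your approach has the advantage of giving an explicit basis of $\La^\perp$ (and in particular of showing directly that $\La^\perp$ has full rank $d-k$, which the paper's proof silently assumes), and it avoids any appeal to exterior-algebra identities beyond the Cauchy--Binet interpretation of $|\cdot\wedge\cdot|$; the paper's approach, on the other hand, is shorter once one is willing to import the proportionality of complementary Pl\"ucker coordinates as a black box.
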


  \begin{proof}
    Let $\vec v_1,\ldots,\vec v_k$ be a basis of $\La$ and $\vec v_{k+1},\ldots,\vec v_d$ --- that of $\La^\perp$. Let $\vec e_1,\ldots,\vec e_d$ be as in Section \ref{sec:d_dimensional}. Then
    \[ \vec v_1\wedge\ldots\wedge\vec v_k=
       \sum_{1\leq i_1<\ldots<i_k\leq d}V^{i_1\ldots i_k}\vec e_{i_1}\wedge\ldots\wedge\vec e_{i_k}, \]
    \[ \vec v_{k+1}\wedge\ldots\wedge\vec v_d=
       \sum_{1\leq i_{k+1}<\ldots<i_d\leq d}V^{i_{k+1}\ldots i_d}\vec e_{i_{k+1}}\wedge\ldots\wedge\vec e_{i_d}, \]
    where the coefficients $V^{i_1\ldots i_k}$ are coprime integers, as well as the coefficients $V^{i_{k+1}\ldots i_d}$. On the other hand,
    it follows from Theorem 1 of Chapter VII, Section 3 of \cite{hodge_pedoe} that the numbers $|V^{i_1\ldots i_k}|$ should be proportional to $|V^{i_{k+1}\ldots i_d}|$.
    This can only be if $|V^{i_1\ldots i_k}|=|V^{i_{k+1}\ldots i_d}|$ for each set of pairwise distinct indices $i_1,\ldots,i_d$. Thus, due to \eqref{eq:cauchy_binet} we have
    \[ \det\La=|\vec v_1\wedge\ldots\wedge\vec v_k|=
           |\vec v_{k+1}\wedge\ldots\wedge\vec v_d|=\det\La^\perp. \]
  \end{proof}

  \section{Section-dual set} \label{sec:section_dual}

  Let $\cS^{d-1}$ denote the Euclidean unit sphere in $\R^d$. For each measurable set $M\subset\R^d$ and each $\vec e\in \cS^{d-1}$ we denote by $\vol_{\vec e}(M)$ the $(d-1)$-dimensional volume of the intersection of $M$ and the hyperspace orthogonal to $\vec e$.

  \begin{definition}
    Let $M$ be a measurable subset of $\R^d$. We call the set
    \[ M^\wedge=\{\, \lambda\vec e\ |\ \vec e\in\cS^{d-1},\ 0\leq\lambda\leq2^{1-d}\vol_{\vec e}(M)\, \} \]
    \emph{section-dual} for $M$.
  \end{definition}

  Proposition \ref{prop:covolumes} and Minkowski's convex body theorem immediately imply

  \begin{lemma} \label{l:section_dual_at_work}
    Let $M$ be convex and $\vec 0$-symmetric. Let $M^\wedge\cap\Z^d\backslash\{\vec 0\}\neq\varnothing$. Then $M\cap\Z^d\backslash\{\vec 0\}\neq\varnothing$.
  \end{lemma}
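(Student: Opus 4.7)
The plan is to reduce the problem to a $(d-1)$-dimensional instance of Minkowski's convex body theorem, applied in the hyperplane orthogonal to a chosen integer point in $M^\wedge$, with the lattice produced by Proposition \ref{prop:covolumes}.

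First I would pick any nonzero $\vec v\in M^\wedge\cap\Z^d$ and write $\vec v=\lambda\vec e$ with $\vec e\in\cS^{d-1}$ and $0\le\lambda\le 2^{1-d}\vol_{\vec e}(M)$. Let $\vec v_0$ be the primitive integer vector parallel to $\vec v$, so that $\vec v=k\vec v_0$ for some $k\in\N$ and in particular $|\vec v|\ge|\vec v_0|$. The one-dimensional subspace $\cL=\spanned_\R(\vec v_0)$ intersects $\Z^d$ in the rank-$1$ lattice $\Z\vec v_0$ of covolume $|\vec v_0|$. Applying Proposition \ref{prop:covolumes} to $\cL$ yields the orthogonal lattice $\La^\perp=\cL^\perp\cap\Z^d=H_{\vec e}\cap\Z^d$ (where $H_{\vec e}=\vec e^\perp$), of rank $d-1$ and covolume $\det\La^\perp=|\vec v_0|$.

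Next I would apply Minkowski's convex body theorem inside the Euclidean hyperplane $H_{\vec e}$ to the convex $\vec 0$-symmetric body $M\cap H_{\vec e}$ and the lattice $\La^\perp$. The hypothesis on $\vec v$ gives
\[
\vol_{\vec e}(M)\;\ge\;2^{d-1}|\vec v|\;\ge\;2^{d-1}|\vec v_0|\;=\;2^{d-1}\det\La^\perp,
\]
which is precisely the volume bound needed by Minkowski's theorem in the $(d-1)$-dimensional space $H_{\vec e}$. Consequently $M\cap H_{\vec e}$ contains a nonzero point of $\La^\perp$, and such a point is a nonzero integer point of $M$, as required.

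The only subtle point, and what I expect to be the main obstacle, is that the inequality $\lambda\le 2^{1-d}\vol_{\vec e}(M)$ is non-strict, so the volume estimate above is only $\ge$ and not $>$. This is handled by the standard compact form of Minkowski's theorem: since we may replace $M$ by its closure (which does not change $\vol_{\vec e}(M)$ or $M\cap H_{\vec e}$'s volume, and preserves convexity and $\vec 0$-symmetry), the bounded convex symmetric body $\overline{M}\cap H_{\vec e}$ with volume at least $2^{d-1}\det\La^\perp$ contains a nonzero lattice point of $\La^\perp$. Every ingredient beyond this is routine, so once the duality of covolumes in Proposition \ref{prop:covolumes} is in place the lemma follows immediately.
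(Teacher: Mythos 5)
Your argument is correct and is exactly the deduction the paper has in mind: the paper gives no proof beyond the remark that Proposition \ref{prop:covolumes} and Minkowski's convex body theorem ``immediately imply'' the lemma, and your write-up (primitive vector $\vec v_0$, the rank-$(d-1)$ lattice $\cL^\perp\cap\Z^d$ of the same covolume $|\vec v_0|$, Minkowski in the hyperplane $H_{\vec e}$) is precisely that deduction spelled out. The handling of the non-strict inequality via the compact form of Minkowski's theorem is appropriate, and in all of the paper's applications $M$ is a closed parallelepiped, so the closure step is harmless.
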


  Let us now describe some properties of the set $M^\wedge$. Obviously, $M^\wedge$ is always symmetric with the origin as the center of symmetry.

  \begin{lemma} \label{l:section_dual_is_convex}
    If $M$ is convex, then so is $M^\wedge$.
  \end{lemma}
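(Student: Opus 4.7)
The plan is to reduce the claim to the classical Busemann intersection body theorem. Observe that $M^\wedge$ is the star body centered at $\vec{0}$ with radial function $\rho(\vec{e}) = 2^{1-d}\vol_{\vec{e}}(M)$ on the unit sphere $\cS^{d-1}$; the scaling constant $2^{1-d}$ plays no role in convexity, and since $\vol_{\vec{e}}(M) = \vol_{-\vec{e}}(M)$ automatically, the set $M^\wedge$ is $\vec{0}$-symmetric. In the intended application $M$ is itself $\vec{0}$-symmetric (as in Lemma \ref{l:section_dual_at_work}), and this is the case I would treat; a simple counterexample (e.g.\ a unit square in the first quadrant of $\R^2$) shows that some such hypothesis on $M$ is really needed, since for non-symmetric $M$ most sections through $\vec{0}$ can be trivial.

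A star body centered at $\vec{0}$ is convex if and only if its intersection with every $2$-dimensional linear subspace is convex, so I would fix such a subspace $P \subset \R^d$ and set $L = P^\perp$, of dimension $d-2$. For each unit vector $\vec{e} \in P$, the orthogonal hyperplane $\vec{e}^\perp$ contains $L$ and decomposes as $\vec{e}^\perp = L + \ell_{\vec{e}}$, where $\ell_{\vec{e}} \subset P$ is the line through $\vec{0}$ orthogonal to $\vec{e}$. Fubini's theorem then gives
\[
\vol_{d-1}(M \cap \vec{e}^\perp) = \int_{\ell_{\vec{e}}} f(x)\,dx, \qquad f(x) := \vol_{d-2}\bigl(M \cap (L + x)\bigr),
\]
and the Brunn--Minkowski inequality applied to $M$ sliced by translates of $L$ shows that $f^{1/(d-2)}$ is concave on its support; since $M = -M$, the function $f$ is also even on $P$.

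The final step is the two-dimensional Busemann inequality: for any even nonnegative function $f$ on $\R^2$ with $f^{1/k}$ concave on its support, the map $\vec{e} \mapsto \int_{\ell_{\vec{e}}} f$ is the radial function of an origin-symmetric convex body in $\R^2$. Applied to our $f$ with $k = d-2$, this shows that $M^\wedge \cap P$ is convex, and since $P$ was arbitrary, $M^\wedge$ is convex. The main obstacle, if one does not wish simply to cite Busemann's theorem, is precisely this two-dimensional inequality: its standard proof uses an affine change of coordinates in $P$ together with the one-dimensional Brunn--Minkowski (or Prékopa--Leindler) inequality, comparing the line integral through the midpoint of two chords of $\{f > 0\}$ with the line integrals through the chords themselves. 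Everything else in the argument is formal.
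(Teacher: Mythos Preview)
Your approach is correct and, as it turns out, more careful than the paper's own argument. The paper gives a short direct proof: for $\vec v_0,\vec v_1\in M^\wedge$ and $\vec v_\lambda=(1-\lambda)\vec v_0+\lambda\vec v_1$ it observes that the three orthogonal hyperspaces share a common $(d-2)$-subspace and then asserts, ``due to the convexity of $M$'', that
\[
\vol_{\vec v_\lambda}(M)\ \geq\ (1-\lambda)\,\vol_{\vec v_0}(M)+\lambda\,\vol_{\vec v_1}(M),
\]
from which $\vec v_\lambda\in M^\wedge$ would follow via $|\vec v_\lambda|\leq(1-\lambda)|\vec v_0|+\lambda|\vec v_1|$. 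But this displayed inequality is false even for symmetric $M$: with $M=[-10,10]\times[-1,1]^{d-1}$, $\vec v_0=\vec e_1$, $\vec v_1=\vec e_2$, $\lambda=\tfrac12$, the left side equals $2\sqrt{2}\cdot 2^{d-2}$ while the right side equals $11\cdot 2^{d-2}$. What \emph{is} true is precisely Busemann's theorem --- that for $\vec 0$-symmetric convex $M$ the map $\vec e\mapsto\vol_{\vec e}(M)$ is the radial function of a convex body --- and that is exactly what your reduction to $2$-planes, Brunn--Minkowski on the fibre function $f$, and the planar Busemann step establish. So your route is not merely an alternative to the paper's but a genuine repair of it.

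You are also right that a symmetry hypothesis is needed: for your example $M=[0,1]^2$ one has $(\tfrac12,0),(0,\tfrac12)\in M^\wedge$ while their midpoint $(\tfrac14,\tfrac14)$ lies in the direction orthogonal to the line $x_1+x_2=0$, which meets $M$ only at the origin, so $(\tfrac14,\tfrac14)\notin M^\wedge$. Thus the lemma as stated (for arbitrary convex $M$) is actually false; this is harmless for the paper, which only ever applies the construction to $\vec 0$-symmetric parallelepipeds, as it itself remarks a few lines later. One small technical note: your Brunn--Minkowski step tacitly assumes $d\geq 3$; for $d=2$ one checks directly that $M^\wedge$ is just $M$ rotated by $\pi/2$, hence convex.
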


  \begin{proof}
    Let $\vec v_0$, $\vec v_1$ be arbitrary distinct non-zero points of $M^\wedge$. Consider an arbitrary point $\vec v_\lambda=(1-\lambda)\vec v_0+\lambda\vec v_1$, $0\leq\lambda\leq1$. The hyperspaces orthogonal to $\vec v_0$, $\vec v_\lambda$ and $\vec v_1$ intersect by a $(d-2)$-dimensional subspace. Hence, due to the convexity of $M$, we have $\vol_{\vec v_\lambda}(M)\geq(1-\lambda)\vol_{\vec v_0}(M)+\lambda\vol_{\vec v_1}(M)$, which means that $\vec v_\lambda\in M^\wedge$. Thus, $M^\wedge$ is convex.
  \end{proof}

  If $M$ is a compact convex and $\vec 0$-symmetric body (and this is the case we shall be interested in, more than that, in all our applications throughout the paper $M$ will be a parallelepiped), then the set $M^\wedge$ resembles a lot the set $[M]^{(d-1)}$ --- the $(d-1)$-th compound of $M$ defined by Mahler (see \cite{mahler_compound_I} or \cite{gruber_lekkerkerker}). $[M]^{(d-1)}$ is a subset of $\wedge^{d-1}(\R^d)$, and the latter is isomorphic to $\R^d$, so we can consider $[M]^{(d-1)}$ lying in the same space as $M^\wedge$. Then it can be easily verified that
  \[ M^\wedge\varsubsetneq [M]^{(d-1)}\varsubsetneq (d-1)M^\wedge. \]
  Indeed, if $S$ is the intersection of $M$ with a $(d-1)$-dimensional subspace of $\R^d$ and if $\vec v_1,\ldots,\vec v_{d-1}$ are chosen in $S$ so that $\det(\langle\vec v_i,\vec v_j\rangle)$ is maximal, then $S$ is a proper subset of the parallelepiped
  \[ \Big\{ \sum_{i=1}^{d-1}\lambda_i\vec v_i\, \Big| -1\leq\lambda_i\leq1 \Big\}. \]

  Furthermore, $M^\wedge$ behaves just like $[M]^{(d-1)}$ when $M$ is subjected to a linear transformation:

  \begin{lemma} \label{l:compound_matrix}
    Let $A$ be a non-degenerate $d\times d$ real matrix. Then $(AM)^\wedge=A'(M^\wedge)$, where $A'$ denotes the cofactor matrix of $A$.
  \end{lemma}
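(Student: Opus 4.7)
The plan is to show, in two steps, that the map $\vec{v}\mapsto A'\vec{v}$ identifies $M^\wedge$ with $(AM)^\wedge$. A generic non-zero element of $M^\wedge$ has the form $\lambda\vec{e}$ with $\vec{e}\in\cS^{d-1}$ and $0\leq\lambda\leq 2^{1-d}\vol_{\vec{e}}(M)$, so I need to understand how the hyperplane $H_{\vec{e}}=\{\vec z : \langle\vec z,\vec e\rangle=0\}$ and its $(d-1)$-dimensional slice of $M$ transform under $A$.

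First I would establish the direction identity. Since $A'=(\det A)(A^{\intercal})^{-1}$, we have
\[
A\,H_{\vec e}=\{A\vec z : \langle\vec z,\vec e\rangle=0\}=\{\vec w : \langle\vec w,(A^{\intercal})^{-1}\vec e\rangle=0\}=H_{\vec e'},\qquad \vec e':=\frac{A'\vec e}{|A'\vec e|},
\]
and in particular $A(M\cap H_{\vec e})=(AM)\cap H_{\vec e'}$. So applying $A'$ to the ray through $\vec e$ lands on the ray through $\vec e'$, with scaling factor $|A'\vec e|$.

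The heart of the proof is then the volume formula
\[
\vol_{\vec e'}(AM)=|A'\vec e|\cdot\vol_{\vec e}(M).
\]
To prove it, I pick an orthonormal basis $\vec v_1,\ldots,\vec v_{d-1}$ of $H_{\vec e}$ and compute the Jacobian of $A|_{H_{\vec e}}:H_{\vec e}\to H_{\vec e'}$ via the wedge product: on one hand
\[
|A\vec v_1\wedge\ldots\wedge A\vec v_{d-1}\wedge A\vec e|=|\det A|\cdot|\vec v_1\wedge\ldots\wedge\vec v_{d-1}\wedge\vec e|=|\det A|,
\]
and on the other hand this equals $|A\vec v_1\wedge\ldots\wedge A\vec v_{d-1}|$ times the length of the component of $A\vec e$ perpendicular to $H_{\vec e'}$, i.e.\ times $|\langle A\vec e,\vec e'\rangle|=|\langle\vec e,A^{\intercal}\vec e'\rangle|$. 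Since $A^{\intercal}A'=(\det A)I$, the vector $A^{\intercal}\vec e'$ equals $(\det A)\vec e/|A'\vec e|$, so this perpendicular length is $|\det A|/|A'\vec e|$. Dividing yields that the Jacobian of $A|_{H_{\vec e}}$ equals $|A'\vec e|$, which gives the volume formula.

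Combining the two ingredients: $\lambda\vec e\in M^\wedge$ iff $\lambda\leq 2^{1-d}\vol_{\vec e}(M)$; its image $A'(\lambda\vec e)=\lambda|A'\vec e|\,\vec e'$ lies in $(AM)^\wedge$ iff $\lambda|A'\vec e|\leq 2^{1-d}\vol_{\vec e'}(AM)=2^{1-d}|A'\vec e|\vol_{\vec e}(M)$, which is the same inequality. Since $A'$ is invertible, the map $\vec e\mapsto\vec e'=A'\vec e/|A'\vec e|$ is a bijection of $\cS^{d-1}$, so every ray of $(AM)^\wedge$ arises in this way. Hence $A'(M^\wedge)=(AM)^\wedge$. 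The only delicate step is the Jacobian identity $|A'\vec e|$; everything else is bookkeeping with the section-dual definition.
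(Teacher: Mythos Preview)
Your proof is correct and follows essentially the same route as the paper: the key identity $\vol_{\vec e'}(AM)=|A'\vec e|\cdot\vol_{\vec e}(M)$ is exactly what the paper obtains by invoking Proposition~\ref{prop:plucker_and_dual_plucker} (with $k=d-1$) together with the observation that the ratio $\vol_{d-1}(S)/|\vec v_1\wedge\ldots\wedge\vec v_{d-1}|$ is preserved by linear maps. You have simply unpacked that citation into an explicit Jacobian computation, which is a perfectly good way to present the same argument.
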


  \begin{proof}
%
    It suffices to apply Proposition \ref{prop:plucker_and_dual_plucker} and notice that if $S$ is a subset of a $(d-1)$-dimensional subspace spanned by $\vec v_1,\ldots,\vec v_{d-1}$, then the quotient of the $(d-1)$-dimensional volume of $S$ and $|\vec v_1\wedge\ldots\wedge\vec v_{d-1}|$ is a linear invariant.
  \end{proof}

  Denote by $\cB_\infty^d$ the unit ball in the sup-norm in $\R^d$, i.e. the cube
  \[ \Big\{ \vec x=(x_1,\ldots,x_d)\in\R^d \ \Big|\ |x_i|\leq1,\ i=1,\ldots,d\, \Big\} \]
  and set
  \begin{equation} \label{eq:Delta_d_definition}
    \Delta_d=\frac{1}{2^{d-1}\sqrt d}\vol_{d-1}\Big\{ \vec x\in\cB_\infty^d \,\Big|\, \sum_{i=1}^dx_i=0 \Big\},
  \end{equation}
  where $\vol_{d-1}(\cdot)$ denotes the $(d-1)$-dimensional Lebesgue measure.

  \begin{lemma} \label{l:cube_wedge}
    $(\cB_\infty^d)^\wedge$ contains (the convex hull of) the points with only two non-zero coordinates, which are equal to $\pm1$, and the points $(\pm\Delta_d,\ldots,\pm\Delta_d)$.
  \end{lemma}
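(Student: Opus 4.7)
The plan is to verify each claimed extreme point against the very definition of $M^\wedge$, namely that $\vec v = \lambda\vec e$ (with $\vec e \in \cS^{d-1}$ and $\lambda \geq 0$) lies in $(\cB_\infty^d)^\wedge$ if and only if $\lambda \leq 2^{1-d}\vol_{\vec e}(\cB_\infty^d)$. Since $\cB_\infty^d$ is invariant under all coordinate permutations and sign flips, the section volume $\vol_{\vec e}(\cB_\infty^d)$ only depends on the orbit of $\vec e$. This allows me to reduce the first family to a single representative and to reduce the second family to $(\Delta_d,\ldots,\Delta_d)$. Once both representatives are shown to lie in $(\cB_\infty^d)^\wedge$, the convex hull assertion is immediate from Lemma \ref{l:section_dual_is_convex}.

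For a representative of the first family I will take $\vec v = \vec e_1 + \vec e_2$, so that $|\vec v| = \sqrt 2$ and $\vec e = \vec v/\sqrt 2$. I parameterize the hyperplane $\{x_1+x_2=0\}$ by the orthonormal basis $\tfrac{1}{\sqrt 2}(1,-1,0,\ldots,0),\vec e_3,\ldots,\vec e_d$; a point in this basis with coordinates $(a,b_3,\ldots,b_d)$ lies in $\cB_\infty^d$ exactly when $|a|\leq\sqrt 2$ and $|b_i|\leq 1$. The intersection is thus a rectangular box of edge lengths $2\sqrt 2, 2,\ldots,2$, of $(d-1)$-volume $2^{d-1}\sqrt 2$, giving $2^{1-d}\vol_{\vec e}(\cB_\infty^d) = \sqrt 2 = |\vec v|$; so $\vec v \in (\cB_\infty^d)^\wedge$ on the boundary.

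For the second family I take $\vec v = (\Delta_d,\ldots,\Delta_d)$, for which $|\vec v| = \sqrt d\,\Delta_d$ and $\vec e = d^{-1/2}(1,\ldots,1)$. The orthogonal hyperplane is precisely $\{\vec x \in \R^d \mid \sum_i x_i = 0\}$, and its $(d-1)$-volume in $\cB_\infty^d$ enters directly in the defining formula \eqref{eq:Delta_d_definition}: substituting gives $2^{1-d}\vol_{\vec e}(\cB_\infty^d) = \sqrt d\,\Delta_d = |\vec v|$. Thus $\vec v \in (\cB_\infty^d)^\wedge$ as well. There is no real obstacle here beyond careful bookkeeping of the $\sqrt 2$-factor in the slice of step two, which arises because the line parameterizing the slice lies along a face-diagonal rather than an edge of the cube.
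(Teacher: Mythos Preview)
Your proof is correct and follows essentially the same approach as the paper's. The paper's own argument is just a terse version of what you wrote: it states that the $(\pm\Delta_d,\ldots,\pm\Delta_d)$ points are ``obviously'' in $(\cB_\infty^d)^\wedge$ (which you unpack via the definition \eqref{eq:Delta_d_definition}), computes the section volume orthogonal to $(1,1,0,\ldots,0)$ as $2^{d-1}\sqrt 2$ exactly as you do, and then says ``the rest is obvious'' where you explicitly invoke the hyperoctahedral symmetry of the cube and Lemma~\ref{l:section_dual_is_convex} for the convex hull.
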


  \begin{proof}
    The points $(\pm\Delta_d,\ldots,\pm\Delta_d)$ are obviously in $(\cB_\infty^d)^\wedge$. The volume of the section of $\cB_\infty^d$ orthogonal to the point $(1,1,0,\ldots,0)$ is equal to $2^{d-1}\sqrt2$, hence this point is also in $(\cB_\infty^d)^\wedge$. The rest is obvious.
  \end{proof}

  \begin{corollary} \label{cor:cube_wedge_cube}
    $(\cB_\infty^d)^\wedge$ contains the cube $\Delta_d\cB_\infty^d$.
  \end{corollary}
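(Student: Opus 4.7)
The plan is to observe that this corollary is an immediate consequence of Lemma \ref{l:cube_wedge} combined with the convexity of the section-dual set established in Lemma \ref{l:section_dual_is_convex}. Specifically, Lemma \ref{l:cube_wedge} asserts that the $2^d$ points $(\pm\Delta_d,\ldots,\pm\Delta_d)$ all lie in $(\cB_\infty^d)^\wedge$. These points are exactly the vertices of the axis-aligned cube $\Delta_d\cB_\infty^d$.

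Since $\cB_\infty^d$ is a convex body, Lemma \ref{l:section_dual_is_convex} guarantees that $(\cB_\infty^d)^\wedge$ is itself convex. Therefore $(\cB_\infty^d)^\wedge$ must contain the convex hull of the above $2^d$ vertices, and that convex hull is precisely $\Delta_d\cB_\infty^d$. This completes the argument; there is no genuine obstacle, the only point worth spelling out is the identification of the convex hull of $\{(\pm\Delta_d,\ldots,\pm\Delta_d)\}$ with the cube $\Delta_d\cB_\infty^d$, which is standard.
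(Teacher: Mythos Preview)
Your argument is correct and matches the paper's intended reasoning: the corollary is stated without proof precisely because Lemma~\ref{l:cube_wedge} already asserts that $(\cB_\infty^d)^\wedge$ contains the convex hull of the points $(\pm\Delta_d,\ldots,\pm\Delta_d)$ (the convex-hull part being justified, as you note, by Lemma~\ref{l:section_dual_is_convex}), and that convex hull is exactly $\Delta_d\cB_\infty^d$.
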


  \begin{corollary} \label{cor:cube_wedge_octahedron}
    $(\cB_\infty^d)^\wedge$ contains the set defined by the inequalities
    \begin{equation} \label{eq:cube_wedge_octahedron}
      \sum_{i=1}^d|x_i|\leq2,\qquad|x_j|\leq1,\qquad j=1,\ldots,d.
    \end{equation}
  \end{corollary}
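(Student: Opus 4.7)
The plan is to realize the polytope in \eqref{eq:cube_wedge_octahedron} as the convex hull of vectors already known, from Lemma \ref{l:cube_wedge}, to lie in $(\cB_\infty^d)^\wedge$, and then invoke convexity of $(\cB_\infty^d)^\wedge$ supplied by Lemma \ref{l:section_dual_is_convex}.

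Concretely, let $P\subset\R^d$ denote the set cut out by $\sum_{i=1}^d|x_i|\leq 2$ and $|x_j|\leq1$ for $j=1,\ldots,d$. First I would check that the vertices of $P$ are exactly the $\vec 0$-symmetric configuration of points having precisely two non-zero coordinates, each equal to $\pm 1$, i.e. the points $\pm\vec e_i\pm\vec e_j$ with $i\neq j$ (together with $\pm\vec e_i\pm\vec e_j$ collapsing to $\pm2\vec e_i$ being excluded by $|x_j|\leq 1$). These points clearly lie in $P$. Conversely, any $\vec x\in P$ can be written as $\lambda\vec y$ with $\vec y$ lying on the face $\{\sum|y_i|=2,\ |y_j|\leq 1\}$ and $0\leq\lambda\leq 1$; by sign symmetry it suffices to write such a $\vec y$ with non-negative coordinates as a convex combination of $0/1$-vectors with exactly two ones, which is the standard fact that the vertices of $\{y\in[0,1]^d\,:\,\sum y_i=2\}$ are precisely those two-support $0/1$-vectors (the slice of the cube by a hyperplane whose vertices come from cube vertices meeting the hyperplane).

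Having identified $P=\conv\{\pm\vec e_i\pm\vec e_j\,:\,i\neq j\}$, I would conclude by combining Lemma \ref{l:cube_wedge} (which places all such vertices in $(\cB_\infty^d)^\wedge$) with Lemma \ref{l:section_dual_is_convex} (which guarantees $(\cB_\infty^d)^\wedge$ is convex, since $\cB_\infty^d$ is), to obtain $P\subset(\cB_\infty^d)^\wedge$.

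The only genuine step of substance is the vertex identification of $P$; the mild subtlety is to notice that for $d=2$ the constraint $\sum|x_i|\leq 2$ is redundant and the vertices of $P=\cB_\infty^2$ coincide with $\pm\vec e_1\pm\vec e_2$ (so the statement still holds), while for $d\geq3$ a cube vertex $(\pm1,\ldots,\pm1)$ violates $\sum|x_i|\leq 2$ and hence only the two-support points survive as vertices. After this check the result follows immediately and no estimates involving $\Delta_d$ are even needed, which is why this is stated as a corollary of Lemma \ref{l:cube_wedge} rather than of Corollary \ref{cor:cube_wedge_cube}.
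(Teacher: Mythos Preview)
Your proof is correct and matches the paper's implicit argument: the corollary is stated without proof precisely because, once Lemma~\ref{l:cube_wedge} places the points $\pm\vec e_i\pm\vec e_j$ ($i\neq j$) inside $(\cB_\infty^d)^\wedge$ and Lemma~\ref{l:section_dual_is_convex} gives convexity, all that remains is the vertex identification of $P$ that you carry out. Your handling of the $d=2$ case and the observation that no use of $\Delta_d$ is needed are both on point.
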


  Let us now say a couple of words concerning the asymptotic behaviour of $\Delta_d$. We shall use it to improve the transference theorem (see Theorem \ref{t:my_transference}). Vaaler's and Ball's theorems (see \cite{vaaler}, \cite{ball}) imply the following

  \begin{proposition} \label{prop:1_and_sqrt_2}
    The volume of each $(d-1)$-dimensional central section of $\cB_\infty^d$ is bounded between $2^{d-1}$ and $2^{d-1}\sqrt2$.
  \end{proposition}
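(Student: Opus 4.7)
The proposition amounts to an application of the two cited results, rescaled to our cube. The plan is to reduce to the normalized cube $Q = [-1/2, 1/2]^d$, which has unit volume, and apply Vaaler's inequality for the lower bound and Ball's inequality for the upper bound.

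First I would observe that $\cB_\infty^d = 2Q$, so any $(d-1)$-dimensional central hyperplane section $H \cap \cB_\infty^d$ is the image of the corresponding section $H \cap Q$ under the dilation by factor $2$ in the hyperplane $H$. Since a dilation by $2$ in a $(d-1)$-dimensional subspace scales $(d-1)$-volume by $2^{d-1}$, we have
\[
   \vol_{d-1}(H \cap \cB_\infty^d) = 2^{d-1}\vol_{d-1}(H \cap Q).
\]
Thus the proposition reduces to the inequalities $1 \leq \vol_{d-1}(H\cap Q) \leq \sqrt{2}$ for every central hyperplane $H$.

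The lower bound is exactly Vaaler's theorem \cite{vaaler}, which asserts that every $k$-dimensional central section of the unit cube $Q$ has $k$-dimensional volume at least $1$; specializing to $k = d-1$ gives the required inequality, with equality attained for a coordinate hyperplane. The upper bound is Ball's cube slicing inequality \cite{ball}, which asserts that every central hyperplane section of $Q$ has $(d-1)$-dimensional volume at most $\sqrt{2}$, with equality approached by hyperplanes with normal vector $\tfrac{1}{\sqrt 2}(1,1,0,\dots,0)$ (up to permutation and signs).

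There is no real obstacle here beyond quoting the two deep theorems correctly; the only point to verify is the factor $2^{d-1}$ coming from the rescaling, which is routine. Combining the two quoted bounds with the scaling identity above yields the claim.
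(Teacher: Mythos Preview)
Your proposal is correct and follows exactly the approach the paper indicates: the paper does not give a proof but simply states that the proposition follows from Vaaler's and Ball's theorems (\cite{vaaler}, \cite{ball}). Your write-up makes explicit the rescaling from $\cB_\infty^d$ to the unit cube $[-1/2,1/2]^d$ that produces the factor $2^{d-1}$, which is all that is needed beyond quoting the two results.
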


  \begin{corollary} \label{cor:1_and_sqrt_2}
    We have $\sqrt{d/2}\leq\Delta_d^{-1}\leq\sqrt d$.
  \end{corollary}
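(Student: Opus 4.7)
The plan is to unfold Definition \eqref{eq:Delta_d_definition} and apply Proposition \ref{prop:1_and_sqrt_2} directly. First, I would observe that the hyperplane $\{\vec{x}\in\R^d\mid\sum_{i=1}^d x_i=0\}$ passes through the origin and therefore cuts $\cB_\infty^d$ in a $(d-1)$-dimensional central section. Denote its $(d-1)$-dimensional volume by $V$, so that by the very definition of $\Delta_d$,
\[ \Delta_d^{-1}=\frac{2^{d-1}\sqrt d}{V}. \]

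Next I would invoke Proposition \ref{prop:1_and_sqrt_2}, which asserts $2^{d-1}\leq V\leq 2^{d-1}\sqrt 2$ for every central $(d-1)$-dimensional section of $\cB_\infty^d$, and in particular for our $V$. Substituting the upper bound $V\leq 2^{d-1}\sqrt 2$ yields
\[ \Delta_d^{-1}\geq\frac{2^{d-1}\sqrt d}{2^{d-1}\sqrt 2}=\sqrt{d/2}, \]
while the lower bound $V\geq 2^{d-1}$ gives
\[ \Delta_d^{-1}\leq\frac{2^{d-1}\sqrt d}{2^{d-1}}=\sqrt d. \]
Combining these two inequalities produces the desired sandwich $\sqrt{d/2}\leq\Delta_d^{-1}\leq\sqrt d$.

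There is no real obstacle here: the corollary is a one-line consequence of the preceding proposition once the normalizing factor $2^{d-1}\sqrt d$ in \eqref{eq:Delta_d_definition} is accounted for. The only thing worth double-checking is that the hyperplane $\sum x_i=0$ is indeed central (which is immediate since it contains $\vec 0$), so that Vaaler's and Ball's bounds apply to it.
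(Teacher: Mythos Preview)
Your argument is correct and is exactly the intended one: the paper presents this as an immediate corollary of Proposition~\ref{prop:1_and_sqrt_2} applied to the section $\{\sum x_i=0\}$ together with the definition \eqref{eq:Delta_d_definition}, and you have spelled out precisely those two steps.
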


  \section{Transference theorem} \label{sec:transference}

  As it was mentioned in Section \ref{sec:intro}, the factor $d-1$ in Theorem \ref{t:mahler_transference} can be substituted by a smaller factor tending to $1$ as $d\to\infty$. This new factor is
  \[ \Delta_d^{-\frac{1}{d-1}}. \]
  It follows from Corollary \ref{cor:1_and_sqrt_2} that it is less than $d-1$ for $d\geq3$ and that
  \[ \Delta_d^{-\frac{1}{d-1}}\to1\quad\text{ as }\quad d\to\infty. \]
  Thus, taking into account Theorem \ref{t:mahler_transference_reformulated}, we see that the following Theorem \ref{t:my_transference} improves Theorem \ref{t:mahler_transference}.
  From now on we use the notations of Section \ref{sec:d_dimensional}.

  \begin{theorem} \label{t:my_transference}
    If
    \[ M_{\scriptscriptstyle U,X}\cap\Z^d\backslash\{\vec 0\}\neq\varnothing, \]
    then
    \[ \widehat M_{\scriptscriptstyle Y,V}\cap\Z^d\backslash\{\vec 0\}\neq\varnothing, \]
    where
    \begin{equation} \label{eq:transference_parameters}
       Y=\Delta_d^{-\frac{1}{d-1}}\big(X^mU^{1-m}\big)^{\frac{1}{d-1}},\quad
       V=\Delta_d^{-\frac{1}{d-1}}\big(X^{1-n}U^n\big)^{\frac{1}{d-1}}.
    \end{equation}
  \end{theorem}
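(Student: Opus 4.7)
The plan is to establish the inclusion
\[ (\widehat M_{Y,V})^\wedge \supseteq M_{U,X}. \]
Once proved, the hypothesis places a non-zero integer point in $(\widehat M_{Y,V})^\wedge$, and then Lemma \ref{l:section_dual_at_work} (applicable because $\widehat M_{Y,V}$ is a convex $\vec 0$-symmetric parallelepiped) moves that point into $\widehat M_{Y,V}$, finishing the proof.

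First I would rewrite both parallelepipeds as linear images of the unit cube. The defining functionals of $M_{U,X}$ are exactly the rows of $(T')^T$ and those of $\widehat M_{Y,V}$ are the rows of $T^T$; combined with $T(T')^T=E_d$ this gives
\[ M_{U,X}=T\,S_1\,\cB_\infty^d, \qquad \widehat M_{Y,V}=T'\,S_2\,\cB_\infty^d, \]
where $S_1=\mathrm{diag}(X,\ldots,X,U,\ldots,U)$ and $S_2=\mathrm{diag}(V,\ldots,V,Y,\ldots,Y)$ with block sizes $m$ and $n$ respectively.

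Next I would apply Lemma \ref{l:compound_matrix}. Using the cofactor identities $(AB)'=A'B'$ and $(T')'=T$---the latter holds because $\det T=1$, so the scalar $\det(T)^{d-2}$ appearing in the general double-cofactor formula equals $1$---this produces
\[ (\widehat M_{Y,V})^\wedge=(T'S_2)'(\cB_\infty^d)^\wedge = T\,S_2'\,(\cB_\infty^d)^\wedge \supseteq \Delta_d\,T\,S_2'\,\cB_\infty^d, \]
the last inclusion coming from Corollary \ref{cor:cube_wedge_cube}. The cofactor of the diagonal $S_2$ is again diagonal, with entries $V^{m-1}Y^n$ in the first $m$ positions and $V^m Y^{n-1}$ in the last $n$.

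The heart of the proof is then the arithmetic identity $\Delta_d S_2'=S_1$, i.e.\ $\Delta_d V^{m-1}Y^n=X$ and $\Delta_d V^m Y^{n-1}=U$. These two equalities are exactly what the formulas \eqref{eq:transference_parameters} for $V$ and $Y$ are engineered to satisfy, and a short exponent check confirms it. Hence $\Delta_d\,T\,S_2'\,\cB_\infty^d=T\,S_1\,\cB_\infty^d=M_{U,X}$, and the desired inclusion follows. I do not foresee a serious obstacle; the only point that demands care is the bookkeeping of cofactor identities---keeping $(AB)'=A'B'$ in the correct order and remembering that $(A')'=\det(A)^{d-2}A$ rather than $A$, so that the reduction $(T')'=T$ genuinely uses $\det T=1$.
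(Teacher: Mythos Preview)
Your proof is correct and is essentially the same as the paper's: both write $\widehat M_{Y,V}$ as a linear image of $\cB_\infty^d$, apply Lemma~\ref{l:compound_matrix} together with Corollary~\ref{cor:cube_wedge_cube} to obtain $M_{U,X}\subset(\widehat M_{Y,V})^\wedge$, and conclude via Lemma~\ref{l:section_dual_at_work}. The only difference is cosmetic---the paper packages the linear map as $A=T\cdot\mathrm{diag}(V^{-1}E_m,Y^{-1}E_n)$ and works with $(A^\ast)^{-1}$ and its cofactor, whereas you use $T'S_2$ directly; the underlying computation and the verification $\Delta_d V^{m-1}Y^n=X$, $\Delta_d V^mY^{n-1}=U$ are identical.
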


  \begin{proof}
    Set
    \[ A=T\cdot
         \begin{pmatrix}
           \dfrac{E_m}{V} & 0 \\
           0 & \dfrac{E_n}{Y}
         \end{pmatrix}=
         \begin{pmatrix}
           \dfrac{E_m}{V} & 0 \\
           \dfrac{-\Theta}{V} & \dfrac{E_n}{Y}
         \end{pmatrix}. \]
    For the cofactor matrix $A'$ we obviously have
    \begin{equation*} \label{eq:cofactor_of_A}
      A'=T'\cdot
         \begin{pmatrix}
           \dfrac{E_m}{Y^nV^{m-1}} & 0 \\
           0   & \dfrac{E_n}{Y^{n-1}V^m}
         \end{pmatrix}=
         \begin{pmatrix}
           \dfrac{E_m}{Y^nV^{m-1}} & \dfrac{\tr\Theta}{Y^{n-1}V^m} \\
           0   & \dfrac{E_n}{Y^{n-1}V^m}
         \end{pmatrix}.
    \end{equation*}
    Then $\widehat M_{\scriptscriptstyle Y,V}=(A^\ast)^{-1}\cB_\infty^d$ and, in view of \eqref{eq:transference_parameters},
    \begin{align*}
      ((A')^\ast)^{-1}B_\infty^d=M_{\scriptscriptstyle Y^{n-1}V^m,Y^nV^{m-1}}=\Delta_d^{-1}M_{\scriptscriptstyle U,X}.
    \end{align*}
    Applying Corollary \ref{cor:cube_wedge_cube} and Lemma \ref{l:compound_matrix} we see that
    \[ M_{\scriptscriptstyle U,X}\subset(\widehat M_{\scriptscriptstyle Y,V})^\wedge. \]
    It remains to make use of Lemma \ref{l:section_dual_at_work}.
  \end{proof}

  It should be mentioned that Mahler derived Theorem \ref{t:mahler_transference} from a somewhat stronger result. Using his bilinear form method he actually proved that in all the inequalities \eqref{eq:mahler_transference_statement} but one the factor $d-1$ can be omitted. Then in our terms \eqref{eq:mahler_transference_statement} becomes
  \begin{equation} \label{eq:only_one_d-1}
  \begin{split}
    |\langle\vec e_{m+i},\vec z\rangle|\leq &\
    \lambda_{m+i}\big(X^mU^{1-m}\big)^{\frac{1}{d-1}},\quad i=1,\ldots,n, \\
    |\langle\pmb\ell_j,\vec z\rangle|\leq &\
    \lambda_j\big(X^{1-n}U^n\big)^{\frac{1}{d-1}},\qquad\,\ j=1,\ldots,m,
  \end{split}
  \end{equation}
  with only one of the $\lambda_k$ equal to $d-1$ and the rest of them equal to $1$.

  Such a statement does not immediately follow from Theorem \ref{t:my_transference}, but it can be easily obtained by a slight modification of its proof. Indeed, let us choose any of the $\lambda_k$ to be equal to $d-1$, denote by $M$ the parallelepiped defined by \eqref{eq:only_one_d-1}, and consider the linear operator $C$, such that $M=C\cB_\infty^d$. Then, with $C'$ denoting the cofactor matrix of $C$, we have
  \begin{align*}
    C'\cB_\infty^d=\Big\{ \vec z\in\R^d \,\Big|\,
    & |\langle\vec e_j,\vec z\rangle|\leq\frac{\mu}{\lambda_j}X,\ \ j=1,\ldots,m, \\
    & |\langle\pmb\ell_{m+i},\vec z\rangle|\leq\frac{\mu}{\lambda_{m+i}}U,\ \ i=1,\ldots,n \Big\},
  \end{align*}
  where $\mu$ is the product of all the $\lambda_k$. One of the $\mu/\lambda_k$ is equal to $1$ and all the others are equal to $d-1$. Hence due to Corollary \ref{cor:cube_wedge_octahedron} and Lemma \ref{l:compound_matrix} we have $M_{\scriptscriptstyle U,X}\subset M^\wedge$, since the facets of the polyhedron defined by \eqref{eq:cube_wedge_octahedron} parallel to coordinate hyperplanes are generalized octahedra with the radii of the inscribed spheres equal to $(d-1)^{-1/2}$. Once again, it remains to apply Lemma \ref{l:section_dual_at_work}.

  \section{The main lemma} \label{sec:the_lemma}

  In this section we prove Lemma \ref{l:M_h_r_section}, which describes the main step in all the proofs to be given in the subsequent Sections. Notice that Lemma \ref{l:M_h_r_section} is in some sense a two-dimensional analogue of Lemma \ref{l:section_dual_at_work}.

  As in Section \ref{sec:d_dimensional}, we shall denote the first $m$ coordinates of a point $\vec z\in\R^d$ as $\vec x$, and the last $n$ ones as $\vec y$. We shall also use the notations $M_{h,r}$ and $\widehat M_{h,r}$ introduced in Section \ref{sec:d_dimensional}.

  \begin{lemma} \label{l:cube_section}
    Let $\vec z_1=(\vec x_1,\vec y_1)\in\R^d$, $\vec z_2=(\vec x_2,\vec y_2)\in\R^d$. Then
    \begin{equation} \label{eq:z_wedge_z_leq_max}
      |\vec z_1\wedge\vec z_2|\leq
      \sqrt{2d(d-1)}
      \max\Big(|\vec x_1|_\infty|\vec x_2|_\infty,\ |\vec y_1|_\infty|\vec y_2|_\infty,\
      \max\big(|\vec x_1|_\infty,|\vec x_2|_\infty\big)\max\big(|\vec y_1|_\infty,|\vec y_2|_\infty\big)\Big).
    \end{equation}
  \end{lemma}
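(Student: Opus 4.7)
The plan is to expand $|\vec z_1 \wedge \vec z_2|$ via the Cauchy-Binet formula \eqref{eq:cauchy_binet} as the square root of a sum of squares of $2\times 2$ minors, and then bound each minor in a way that reflects the block structure $\R^d = \R^m \oplus \R^n$ coming from the split $\vec z_k = (\vec x_k, \vec y_k)$.

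Concretely, writing $\vec z_k = (z_{k,1}, \dots, z_{k,d})$, one has
\[
 |\vec z_1 \wedge \vec z_2|^2 = \sum_{1 \leq i < j \leq d} \bigl(z_{1,i} z_{2,j} - z_{1,j} z_{2,i}\bigr)^2.
\]
I would split the $\binom{d}{2}$ index pairs $(i,j)$ into three families according to whether both indices lie in $\{1,\dots,m\}$, both lie in $\{m+1,\dots,d\}$, or they straddle the block boundary. For each family I bound $|z_{1,i} z_{2,j} - z_{1,j} z_{2,i}|$ by $|z_{1,i}||z_{2,j}| + |z_{1,j}||z_{2,i}|$ and then use the sup-norm estimates on $\vec x_k$ or $\vec y_k$: the first family gives at most $2|\vec x_1|_\infty|\vec x_2|_\infty$, the second at most $2|\vec y_1|_\infty|\vec y_2|_\infty$, and the mixed family at most $|\vec x_1|_\infty|\vec y_2|_\infty + |\vec x_2|_\infty|\vec y_1|_\infty \leq 2\max(|\vec x_1|_\infty,|\vec x_2|_\infty)\max(|\vec y_1|_\infty,|\vec y_2|_\infty)$.

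Denoting by $M$ the maximum on the right-hand side of \eqref{eq:z_wedge_z_leq_max}, each of the three bounds is at most $2M$, so every minor is at most $2M$ in absolute value. Summing the squares over all $\binom{d}{2} = d(d-1)/2$ index pairs yields
\[
 |\vec z_1 \wedge \vec z_2|^2 \leq \frac{d(d-1)}{2} \cdot (2M)^2 = 2d(d-1)\,M^2,
\]
and taking the square root gives exactly \eqref{eq:z_wedge_z_leq_max}.

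There is no real obstacle here: the argument is just Cauchy--Binet combined with a case split according to the block decomposition. The only modest care lies in correctly identifying, for the mixed pairs, which factors are $\vec x$-coordinates and which are $\vec y$-coordinates, so that the bound involves $\max(|\vec x_1|_\infty,|\vec x_2|_\infty)\max(|\vec y_1|_\infty,|\vec y_2|_\infty)$ rather than something weaker; the constant $\sqrt{2d(d-1)}$ then comes out exactly because each of the $\binom{d}{2}$ minors contributes $(2M)^2$ in the worst case.
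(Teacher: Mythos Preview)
Your proof is correct and essentially the same as the paper's: both expand $|\vec z_1\wedge\vec z_2|$ via the $2\times2$ minors, split into the three families of index pairs according to the block decomposition, and bound each minor by $2M$. The only cosmetic difference is that the paper first passes to the single largest minor (giving $|\vec z_1\wedge\vec z_2|\leq\sqrt{d(d-1)/2}\,|V^{j_1j_2}|$) and then interprets $|V^{j_1j_2}|$ as the area of a coordinate projection of the parallelogram, whereas you bound each minor directly by the triangle inequality and sum; both routes yield the same constant $\sqrt{2d(d-1)}$.
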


  \begin{proof}
    We have
    \[ \vec z_1\wedge\vec z_2=\sum_{1\leq i_1<i_2\leq d}V^{i_1i_2}\vec e_{i_1}\wedge\vec e_{i_2}. \]
    Let $|V^{j_1j_2}|$ be the maximal number among all the $|V^{i_1i_2}|$, $1\leq i_1<i_2\leq d$. Then
    \begin{equation} \label{eq:z_wedge_z_leq_V}
      |\vec z_1\wedge\vec z_2|\leq\sqrt{\frac{d(d-1)}{2}}|V^{j_1j_2}|.
    \end{equation}
    The value $|V^{j_1j_2}|$ is equal to the volume of the projection of the parallelogram spanned by $\vec z_1$, $\vec z_2$ onto the subspace $\spanned_\R(\vec e_{j_1}\vec e_{j_2})$. Therefore,
    \begin{equation} \label{eq:V_leq_cases}
      |V^{j_1j_2}|\leq
      \begin{cases}
        2|\vec x_1|_\infty|\vec x_2|_\infty,\qquad\qquad\qquad\qquad\qquad\qquad\quad\text{ if }j_1<j_2\leq m, \\
        2|\vec y_1|_\infty|\vec y_2|_\infty,\qquad\qquad\qquad\qquad\qquad\qquad\quad\text{ if }j_2>j_1>m, \\
        2\max\big(|\vec x_1|_\infty,|\vec x_2|_\infty\big)\max\big(|\vec y_1|_\infty,|\vec y_2|_\infty\big),\quad
        \text{ if }j_1\leq m<j_2.
      \end{cases}
    \end{equation}
    These are the only three possible cases, since $j_1<j_2$. Combining \eqref{eq:z_wedge_z_leq_V} and \eqref{eq:V_leq_cases}, we get \eqref{eq:z_wedge_z_leq_max}.
  \end{proof}

  \begin{lemma} \label{l:M_h_r_section}
    Let $h$, $r$, $h_1$, $r_1$, $h_2$, $r_2$ be positive real numbers and let $\vec v_1$, $\vec v_2$ be non-collinear points of $\Z^d$.
    Suppose that
    \begin{equation} \label{eq:v1_v2}
      \vec v_1\in M_{h_1,r_1}\,,\qquad\vec v_2\in M_{h_2,r_2}
    \end{equation}
    and
    \begin{equation} \label{eq:bound_for_the_products}
      \max\Big(r^2r_1r_2,\ h^2h_1h_2,\ hr\max(r_1,r_2)\max(h_1,h_2)\Big)\leq
      \frac{h^nr^m}{\sqrt{2d(d-1)}}\,.
    \end{equation}
    Then
    \[ \widehat M_{h,r}\cap\Z^d\backslash\{\vec 0\}\neq\varnothing. \]
  \end{lemma}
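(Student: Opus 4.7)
The plan is to produce a non-zero integer point of $\widehat M_{h,r}$ by Minkowski's convex body theorem inside the $(d-2)$-dimensional subspace $\cL_2^\perp$, where $\cL_2=\spanned_\R(\vec v_1,\vec v_2)$. Since $\vec v_1,\vec v_2$ are non-collinear integer vectors in $\cL_2$, the lattice $\La_2:=\Z^d\cap\cL_2$ has rank two and satisfies $\det\La_2\leq|\vec v_1\wedge\vec v_2|$; by Proposition~\ref{prop:covolumes} the orthogonal integer lattice $\La_2^\perp:=\Z^d\cap\cL_2^\perp$ (of rank $d-2$) has the same covolume. So it suffices to prove the section-volume bound
\[
  \vol_{d-2}(\widehat M_{h,r}\cap\cL_2^\perp)\geq 2^{d-2}|\vec v_1\wedge\vec v_2|,
\]
after which Minkowski furnishes a non-zero $\vec w\in\La_2^\perp\cap\widehat M_{h,r}\subset\Z^d\cap\widehat M_{h,r}$.

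To reach this bound I would rescale everything to the unit cube. Set $D=\textup{diag}(rE_m,hE_n)$ and $A=DT^{-1}$; then $\det A=r^mh^n$ and $A\vec v_i=\bigl(r\vec x_i,\,h(\Theta\vec x_i+\vec y_i)\bigr)$, so the first $m$ coordinates of $A\vec v_i$ have sup-norm at most $rr_i$ and the last $n$ have sup-norm at most $hh_i$, from $\vec v_i\in M_{h_i,r_i}$. A direct substitution shows that $\tr{T}$ carries $\widehat M_{h,r}$ onto the box $r\cB_\infty^m\times h\cB_\infty^n$, which $D^{-1}$ then rescales onto $\cB_\infty^d$; hence $(A^*)^{-1}=D^{-1}\tr{T}$ maps $\widehat M_{h,r}$ onto the unit cube. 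By Proposition~\ref{prop:plucker_and_dual_plucker}, $(A^*)^{-1}\cL_2^\perp=(A\cL_2)^\perp$, and that proposition's determinant identity rearranges to
\[
  \vol_{d-2}(\widehat M_{h,r}\cap\cL_2^\perp)=\frac{r^mh^n\,|\vec v_1\wedge\vec v_2|}{|A\vec v_1\wedge A\vec v_2|}\,
    \vol_{d-2}\bigl(\cB_\infty^d\cap(A\cL_2)^\perp\bigr).
\]

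Two estimates then close the argument. Lemma~\ref{l:cube_section} applied to $A\vec v_1,A\vec v_2$ with the bounds just noted gives
\[
  |A\vec v_1\wedge A\vec v_2|\leq\sqrt{2d(d-1)}\,\max\bigl(r^2r_1r_2,\;h^2h_1h_2,\;hr\max(r_1,r_2)\max(h_1,h_2)\bigr)\leq r^mh^n,
\]
the last inequality being exactly hypothesis~\eqref{eq:bound_for_the_products}; and Vaaler's cube-section inequality in its general-codimension form (Proposition~\ref{prop:1_and_sqrt_2} is the hyperplane case) supplies $\vol_{d-2}(\cB_\infty^d\cap(A\cL_2)^\perp)\geq 2^{d-2}$. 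Plugging both into the displayed identity yields the required section-volume bound. The substantive difficulty is spotting the composite $A=DT^{-1}$: it is simultaneously the map whose adjoint-inverse rescales $\widehat M_{h,r}$ to the unit cube, and the map that converts the linear-form constraints on $\vec v_i$ into sup-norm coordinate bounds under which the three cases of Lemma~\ref{l:cube_section} reproduce exactly the three terms of \eqref{eq:bound_for_the_products}; once $A$ is in hand, the Jacobian accounting through Proposition~\ref{prop:plucker_and_dual_plucker} and the appeal to Vaaler's inequality are routine.
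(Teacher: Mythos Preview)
Your proof is correct and follows essentially the same route as the paper's own argument: the paper defines the transformation $A=T\cdot\textup{diag}(E_m/r,\,E_n/h)$, which is the inverse of your $A=DT^{-1}$, and then applies $A^{-1}$ to $\vec v_1,\vec v_2$ where you apply $A$---a purely notational difference. Both proofs then invoke Proposition~\ref{prop:plucker_and_dual_plucker} for the Jacobian accounting, Lemma~\ref{l:cube_section} to bound the wedge product, Vaaler's $(d-2)$-dimensional section bound, and Proposition~\ref{prop:covolumes} together with Minkowski's theorem to finish.
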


  \begin{proof}
    Let us consider the matrix
    \[ A=T\cdot
         \begin{pmatrix}
           \dfrac{E_m}{r} & 0 \\
           0 & \dfrac{E_n}{h}
         \end{pmatrix}=
         \begin{pmatrix}
           \dfrac{E_m}{r} & 0 \\
           \dfrac{-\Theta}{r} & \dfrac{E_n}{h}
         \end{pmatrix} \]
    and its inverse conjugate
    \begin{equation*}
      (A^\ast)^{-1}=(\tr A)^{-1}=(\det A)^{-1}\cdot T'\cdot
         \begin{pmatrix}
           \dfrac{E_m}{h^nr^{m-1}} & 0 \\
           0   & \dfrac{E_n}{h^{n-1}r^m}
         \end{pmatrix}=
         \begin{pmatrix}
           rE_m & h\tr\Theta \\
           0   & hE_n
         \end{pmatrix}.
    \end{equation*}
    Then $\widehat M_{h,r}=(A^\ast)^{-1}\cB_\infty^d$. For each of the points
    \[ \vec z_k=(\vec x_k,\vec y_k)=A^{-1}\vec v_k,\qquad k=1,2, \]
    we have
    \[ |\vec x_k|_\infty=r\max_{1\leq j\leq m}|\langle\vec e_j,\vec v_k\rangle|,\qquad
       |\vec y_k|_\infty=h\max_{1\leq i\leq n}|\langle\pmb\ell_{m+i},\vec v_k\rangle|. \]
    Hence in view of \eqref{eq:v1_v2} and \eqref{eq:bound_for_the_products} we get
    \begin{equation} \label{eq:max_bounded_by_det}
    \begin{aligned}
      \max\Big(|\vec x_1|_\infty|\vec x_2|_\infty,\ |\vec y_1|_\infty|\vec y_2|_\infty,\
      \max\big(|\vec x_1|_\infty,|\vec x_2|_\infty\big)\max\big(|\vec y_1|_\infty,|\vec y_2|_\infty\big)\Big) &
      \leq \\ \leq
      \max\Big(r^2r_1r_2,\ h^2h_1h_2,\ hr\max(r_1,r_2)\max(h_1,h_2)\Big) & \leq
      \frac{(\det A)^{-1}}{\sqrt{2d(d-1)}}\,.
    \end{aligned}
    \end{equation}
    Set
    \[ \cL=\spanned_\R(\vec v_1,\vec v_2)\quad\text{ and }\quad\La=\cL\cap\Z^d. \]
    Clearly, $\spanned_\Z(\vec v_1,\vec v_2)$ is a sublattice of $\La$ and its determinant is a multiple of $\det\La$. Applying Proposition \ref{prop:plucker_and_dual_plucker}, Lemma \ref{l:cube_section} and inequality \eqref{eq:max_bounded_by_det} we see that
    \begin{equation} \label{eq:max_bounded_by_det_implies}
      \frac{|\vec v_1\wedge\vec v_2|}{2^{2-d}\vol_{d-2}(\cL^\perp\cap\widehat M_{h,r})}=
      \frac{(\det A)\cdot|\vec z_1\wedge\vec z_2|}{2^{2-d}\vol_{d-2}((A^{-1}\cL)^\perp\cap\cB_\infty^d)}\leq
      \frac{1}{2^{2-d}\vol_{d-2}((A^{-1}\cL)^\perp\cap\cB_\infty^d)}\leq1.
    \end{equation}
    The latter inequality is due to Vaaler's theorem (see \cite{vaaler}), which says that the volume of any $(d-2)$-dimensional section of $\cB_\infty^d$ is bounded from below by $2^{d-2}$. Thus,
    \[ \vol_{d-2}(\cL^\perp\cap\widehat M_{h,r})\geq2^{d-2}\det\La, \]
    which, together with Proposition \ref{prop:covolumes} and Minkowski's convex body theorem, implies that $\cL^\perp\cap\widehat M_{h,r}$ contains a nonzero integer point.
  \end{proof}

  In order to make application of Lemma \ref{l:M_h_r_section} more convenient it is useful to mention the following observation.

  \begin{lemma} \label{l:sufficiency_for_M_h_r_section}
    If
    \[ \max(r_1,r_2)\max(h_1,h_2)\leq\frac{h^{n-1}r^{m-1}}{\sqrt{2d(d-1)}} \]
    and any of the equalities
    \[ r_1r_2=\frac{h^nr^{m-2}}{\sqrt{2d(d-1)}}\,,\qquad h_1h_2=\frac{h^{n-2}r^m}{\sqrt{2d(d-1)}} \]
    holds, then we have \eqref{eq:bound_for_the_products}.
  \end{lemma}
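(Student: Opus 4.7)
The plan is to unpack the maximum in \eqref{eq:bound_for_the_products} into three separate inequalities and verify each one. Writing $C=\sqrt{2d(d-1)}$, the three targets are $r_1r_2\leq h^nr^{m-2}/C$, $h_1h_2\leq h^{n-2}r^m/C$, and $\max(r_1,r_2)\max(h_1,h_2)\leq h^{n-1}r^{m-1}/C$. The last of these is identical to the first hypothesis of the lemma, so nothing needs to be done for it; it is the other two that require a short argument.

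The key observation is the elementary bound $\max(r_1,r_2)\geq\sqrt{r_1r_2}$ (and its analogue for the $h_i$), which converts a product hypothesis into a max hypothesis. Suppose first that $r_1r_2=h^nr^{m-2}/C$. Then the inequality $r_1r_2\leq h^nr^{m-2}/C$ holds automatically, so only the bound on $h_1h_2$ needs attention. Combining the first hypothesis with $\max(r_1,r_2)\geq\sqrt{r_1r_2}$ gives
\[
\max(h_1,h_2)\leq\frac{h^{n-1}r^{m-1}/C}{\max(r_1,r_2)}\leq\frac{h^{n-1}r^{m-1}/C}{\sqrt{r_1r_2}}=\sqrt{h^{n-2}r^m/C},
\]
and squaring yields $h_1h_2\leq\max(h_1,h_2)^2\leq h^{n-2}r^m/C$, as required. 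The second alternative $h_1h_2=h^{n-2}r^m/C$ is handled by a symmetric argument in which the roles of $(r_i,m)$ and $(h_i,n)$ are interchanged, so it suffices to write the argument once.

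I do not expect any real obstacle here: the lemma is essentially a piece of elementary inequality bookkeeping. The only point one must be careful about is the symmetry between the two prescribed equalities — the exponents $m$ and $n$ swap places precisely as the roles of the $r_i$ and $h_i$ swap, so the two cases reduce to each other cleanly and no separate computation is needed for the second.
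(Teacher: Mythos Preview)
Your argument is correct and essentially the same as the paper's. The paper compresses the computation into a single line by observing that $r_1r_2h_1h_2\leq\big(\max(r_1,r_2)\max(h_1,h_2)\big)^2$ and then dividing by whichever of $r_1r_2$ or $h_1h_2$ is given as an equality; this is exactly the combination of your two uses of $ab\leq\max(a,b)^2$, so the content is identical.
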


  \begin{proof}
    Everything follows from the inequality $r_1r_2h_1h_2\leq\big(\max(r_1,r_2)\max(h_1,h_2)\big)^2$.
  \end{proof}

  \section{Proof of Theorem \ref{t:loranoyadenie_f}} \label{sec:loranoyadenie}


  \begin{lemma} \label{l:the_alpha_beta_semicore_1}
    Let $t$, $\Phi$, $\Psi$ be arbitrary positive real numbers, $\Phi\geq\Psi$.
    Let $\vec v_1$, $\vec v_2$ be non-collinear integer points, such that
    \begin{equation} \label{eq:v1_v2_1}
      \vec v_1\in M_{\Phi,t}\quad\text{ and }\quad\vec v_2\in M_{\Psi,t}\,.
    \end{equation}
    Then
    \[ \widehat M_{h,r}\cap\Z^d\backslash\{\vec 0\}\neq\varnothing, \]
    where
    \[ h=\left(ct^m\Phi\Psi^{1-m}\right)^\frac{1}{d-2},\qquad
       r=\left(ct^{2-n}\Phi\Psi^{n-1}\right)^\frac{1}{d-2},\qquad
       c=\sqrt{2d(d-1)}. \]
  \end{lemma}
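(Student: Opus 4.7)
The plan is to apply the main Lemma~\ref{l:M_h_r_section} directly, with the parameters $h_1=\Phi$, $r_1=t$, $h_2=\Psi$, $r_2=t$, since by hypothesis \eqref{eq:v1_v2_1} this exactly encodes the assumption $\vec v_1\in M_{\Phi,t}$, $\vec v_2\in M_{\Psi,t}$. Under the assumption $\Phi\geq\Psi$ we have $\max(h_1,h_2)=\Phi$ and $\max(r_1,r_2)=t$, so we need to verify
\[ \max\!\Big(r^2t^2,\ h^2\Phi\Psi,\ hrt\Phi\Big)\leq\frac{h^nr^m}{c}. \]

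To do this I would invoke the convenient sufficient condition given by Lemma~\ref{l:sufficiency_for_M_h_r_section}: if the product inequality
\[ \max(r_1,r_2)\max(h_1,h_2)=t\Phi\leq\frac{h^{n-1}r^{m-1}}{c} \]
holds together with one of the two admissible equalities, then \eqref{eq:bound_for_the_products} follows. So everything reduces to the exponent arithmetic for the specific $h=(ct^m\Phi\Psi^{1-m})^{1/(d-2)}$ and $r=(ct^{2-n}\Phi\Psi^{n-1})^{1/(d-2)}$.

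The key computation, which I would carry out explicitly by raising both sides to the power $d-2=n+m-2$ and collecting exponents of $c$, $t$, $\Phi$, $\Psi$, is the pair of identities
\[ h^{n-2}r^m=c\,\Phi\Psi\qquad\text{and}\qquad h^{n-1}r^{m-1}=c\,t\Phi. \]
The first of these says $h_1h_2=\Phi\Psi=h^{n-2}r^m/c$, which is precisely the second of the two equalities allowed in Lemma~\ref{l:sufficiency_for_M_h_r_section}. The second identity says $\max(r_1,r_2)\max(h_1,h_2)=t\Phi=h^{n-1}r^{m-1}/c$, which is the product inequality with equality. Hence both hypotheses of Lemma~\ref{l:sufficiency_for_M_h_r_section} hold, so \eqref{eq:bound_for_the_products} is satisfied and Lemma~\ref{l:M_h_r_section} yields a nonzero integer point in $\widehat M_{h,r}$, as required.

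There is no genuine obstacle here; the proof is essentially a bookkeeping exercise in balancing the exponents of $\Phi$ and $\Psi$ in $h^n$ and $r^m$ against $t^2$ and $\Phi\Psi$. The only mild subtlety is noting that the hypothesis $\Phi\geq\Psi$ is used solely to identify $\max(h_1,h_2)$ with $\Phi$ (so that the chosen equality $h_1h_2=h^{n-2}r^m/c$, rather than the other one involving $r_1r_2$, is the right one to invoke); the particular form of $h$ and $r$ in the conclusion has been cooked up precisely so that the two identities above hold with equality, which is why no slack is lost in the argument.
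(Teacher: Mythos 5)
Your proof is correct and follows exactly the paper's own argument: the paper likewise verifies the two identities $h^{n-2}r^m=c\Phi\Psi$ and $h^{n-1}r^{m-1}=ct\Phi$ and then invokes $\Phi\geq\Psi$ together with Lemma~\ref{l:sufficiency_for_M_h_r_section} and Lemma~\ref{l:M_h_r_section}. Your added remark on where $\Phi\geq\Psi$ enters is exactly the point the paper leaves implicit.
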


  \begin{proof}
    We have
    \begin{align*}
      h^{n-2}r^m & =c\Phi\Psi, \\
      h^{n-1}r^{m-1} & =ct\Phi.
    \end{align*}
    It remains to make use of the inequality $\Phi\geq\Psi$, 
    Lemma \ref{l:sufficiency_for_M_h_r_section} and Lemma \ref{l:M_h_r_section}.
  \end{proof}

  Let us derive from Lemma \ref{l:the_alpha_beta_semicore_1} the statement of Theorem \ref{t:loranoyadenie_f} with $k=1$. It follows from Proposition \ref{prop:definitions_reformulated} that for all $t$ large enough we can choose non-collinear points $\vec v_1,\vec v_2\in\Z^d$ satisfying \eqref{eq:v1_v2_1} with $\Psi=\psi(t)$ and $\Phi=\phi(t)$. Then, in case $f_1$ is increasing and invertible, we have $r=g_1(f_1^-(h))$ and $h\to+\infty$ as $t\to+\infty$. This gives the desired statement.

  To prove the statement of Theorem \ref{t:loranoyadenie_f} with $k=-1$ we need a reversed analogue of Lemma \ref{l:the_alpha_beta_semicore_1}:

  \begin{lemma} \label{l:the_alpha_beta_semicore_2}
    Let $t$, $\Phi$, $\Psi$ be arbitrary positive real numbers, $\Phi\geq\Psi$.
    Let $\vec v_1$, $\vec v_2$ be non-collinear integer points, such that
    \begin{equation} \label{eq:v1_v2_2}
      \vec v_1\in M_{t,\Phi}\quad\text{ and }\quad\vec v_2\in M_{t,\Psi}\,.
    \end{equation}
    Then
    \[ \widehat M_{h,r}\cap\Z^d\backslash\{\vec 0\}\neq\varnothing, \]
    where
    \[ h=\left(ct^{2-m}\Phi\Psi^{m-1}\right)^\frac{1}{d-2},\qquad
       r=\left(ct^n\Phi\Psi^{1-n}\right)^\frac{1}{d-2},\qquad
       c=\sqrt{2d(d-1)}. \]
  \end{lemma}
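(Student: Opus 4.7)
The plan is to mirror the proof of Lemma~\ref{l:the_alpha_beta_semicore_1} essentially verbatim, with the roles of the constraints on $\langle \pmb\ell_{m+i},\vec z\rangle$ and $\langle\vec e_j,\vec z\rangle$ swapped. In the notation of Lemma~\ref{l:M_h_r_section}, the hypothesis \eqref{eq:v1_v2_2} is exactly $\vec v_1\in M_{h_1,r_1}$ and $\vec v_2\in M_{h_2,r_2}$ with $h_1=h_2=t$, $r_1=\Phi$, $r_2=\Psi$. Since $\Phi\geq\Psi$, we have $\max(r_1,r_2)=\Phi$ and $\max(h_1,h_2)=t$. So the whole task reduces to checking that the $h$ and $r$ given in the statement satisfy the sufficiency criterion of Lemma~\ref{l:sufficiency_for_M_h_r_section}.

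The key calculation I would carry out is to raise the formulas
\[
  h=\bigl(ct^{2-m}\Phi\Psi^{m-1}\bigr)^{\frac{1}{d-2}},\qquad
  r=\bigl(ct^n\Phi\Psi^{1-n}\bigr)^{\frac{1}{d-2}}
\]
to the three exponents that appear in Lemma~\ref{l:sufficiency_for_M_h_r_section}. A short exponent-bookkeeping shows that
\[
  h^{n-1}r^{m-1}=ct\Phi,\qquad h^n r^{m-2}=c\Phi\Psi,
\]
using the identity $n+m-2=d-2$ for the $c$-exponent, and then checking separately that the $t$, $\Phi$, $\Psi$ exponents collapse to the right values. Consequently
\[
  \max(r_1,r_2)\max(h_1,h_2)=\Phi t=\frac{h^{n-1}r^{m-1}}{c}\quad\text{and}\quad r_1r_2=\Phi\Psi=\frac{h^n r^{m-2}}{c},
\]
so the first equality option of Lemma~\ref{l:sufficiency_for_M_h_r_section} (the one involving $r_1r_2$) is the correct choice here — in contrast to Lemma~\ref{l:the_alpha_beta_semicore_1}, where the $h_1h_2$ option was used.

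With the hypotheses of Lemma~\ref{l:sufficiency_for_M_h_r_section} verified, I would conclude that \eqref{eq:bound_for_the_products} holds, and then invoke Lemma~\ref{l:M_h_r_section} directly to produce a nonzero integer point in $\widehat M_{h,r}$. The only potential obstacle is the possibility that the roles of $\Phi$ and $\Psi$ get inverted in the exponents (the non-symmetry $\Phi\geq\Psi$ must match the non-symmetry of the roles of $r_1$ vs.\ $r_2$ inside $\max(r_1,r_2)$), but since the $h$- and $r$-formulas only involve $\Phi$ linearly and $\Psi$ through a symmetric-looking $(m-1)$ or $(1-n)$ power, this is easy to track. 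No new geometric ideas are needed beyond those already developed in Sections~\ref{sec:section_dual} and \ref{sec:the_lemma}.
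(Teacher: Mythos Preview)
Your proposal is correct and matches the paper's own proof essentially line for line: the paper also sets $h_1=h_2=t$, $r_1=\Phi$, $r_2=\Psi$, verifies the two identities $h^{n}r^{m-2}=c\Phi\Psi$ and $h^{n-1}r^{m-1}=ct\Phi$, and then invokes Lemma~\ref{l:sufficiency_for_M_h_r_section} (via the $r_1r_2$ equality) together with Lemma~\ref{l:M_h_r_section}.
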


  \begin{proof}
    We have
    \begin{align*}
      h^nr^{m-2} & =c\Phi\Psi, \\
      h^{n-1}r^{m-1} & =ct\Phi.
    \end{align*}
    Once again, it remains to make use of the inequality $\Phi\geq\Psi$, 
    Lemma \ref{l:sufficiency_for_M_h_r_section} and Lemma \ref{l:M_h_r_section}.
  \end{proof}

  It follows from Proposition \ref{prop:definitions_reformulated} that for all $t$ small enough we can choose non-collinear points $\vec v_1,\vec v_2\in\Z^d$ satisfying \eqref{eq:v1_v2_2} with $\Psi=\psi^-(t)$ and $\Phi=\phi^-(t)$. Then, in case $f_{-1}$ is decreasing and invertible, we have $r=g_{-1}(f_{-1}^-(h))$ and $h\to+\infty$ as $t\to0$. This gives the statement of Theorem \ref{t:loranoyadenie_f} with $k=-1$.

  \section{Proof of Theorem \ref{t:my_inequalities_f}} \label{sec:uniform_exponents}

%

  \begin{lemma} \label{l:the_alphas_core}
    Let $\phi$, $\psi$ be as in Theorem \ref{t:my_inequalities_f} and let $h$ be an arbitrary positive real number. Set \begin{equation} \label{eq:r_is_phi_of_h}
      r=\phi(h)
    \end{equation}
    and
    \begin{equation} \label{eq:h_ast_r_ast_definition}
      h^\ast=\Delta_d\,r^mh^{n-1},\qquad r^\ast=\Delta_d\,r^{m-1}h^n,
    \end{equation}
    where $\Delta_d$ is defined by \eqref{eq:Delta_d_definition}.

    Suppose that in the interval 
    \[ \mathcal I=[r^\ast,\max(r^\ast,\psi^-(h^\ast))] \] 
    one of the conditions $(i)$, $(ii)$ of Theorem \ref{t:my_inequalities_f} holds, and 
    \begin{equation} \label{eq:M_lambda_contains_points}
      M_{\psi(t),t}\cap\Z^d\backslash\{\vec 0\}\neq\varnothing,\quad\text{ for every $t\in\mathcal I$.}
    \end{equation}
    Then
    \begin{equation} \label{eq:M_h_r_contains_points}
      \widehat M_{h,r}\cap\Z^d\backslash\{\vec 0\}\neq\varnothing.
    \end{equation}
  \end{lemma}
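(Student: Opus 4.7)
The plan is a dichotomy based on whether the integer approximation furnished by the hypothesis at $t=r^*$ already lies inside the Mahler-transference parallelepiped $M_{h^*,r^*}$. Let $\vec v_1$ be a primitive nonzero integer point of $M_{\psi(r^*),r^*}$, produced by the hypothesis at $t=r^*\in\mathcal I$, and set $h_1=|\Theta\vec x_1+\vec y_1|_\infty$.

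The easy case is $h_1\leq h^*$: then $\vec v_1\in M_{h^*,r^*}$, and a direct computation from the definitions $r^*=\Delta_d r^{m-1}h^n$ and $h^*=\Delta_d r^m h^{n-1}$ shows that the output parameters of Theorem~\ref{t:my_transference} with input $(X,U)=(r^*,h^*)$ simplify to $(Y,V)=(h,r)$, so $\widehat M_{h,r}\cap\Z^d\setminus\{\vec 0\}\neq\varnothing$ at once.

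Otherwise $h_1>h^*$, so $t_*:=\psi^-(h_1)\in[r^*,\psi^-(h^*))$ and $\mathcal I$ is non-trivial. Here the plan is to apply Lemma~\ref{l:M_h_r_section} to a pair $(\vec v_1,\vec v_2)$ with $\vec v_2\in M_{\psi(t_*),t_*}$ non-collinear to $\vec v_1$. To produce $\vec v_2$ I will use a compactness argument: if $M_{\psi(t_*),t_*}\cap\Z^d$ contained only $\pm\vec v_1$ and $\vec 0$, then for each $t>t_*$ in $\mathcal I$ (where $\vec v_1$ is excluded because $\psi(t)<h_1$) the hypothesis forces some $\vec v^{(t)}\neq\pm\vec v_1$ in $M_{\psi(t),t}$; because such vectors live in a bounded subset of $\Z^d$, a subsequence stabilises to a single $\vec v_2^*$, and continuity of $\psi$ places $\vec v_2^*$ in $M_{\psi(t_*),t_*}\setminus\{\pm\vec v_1\}$, a contradiction. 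Linear independence then follows because any multiple $k\vec v_1$ with $|k|\geq 2$ has error $|k|h_1$ strictly exceeding $\psi(t_*)=h_1$ and so cannot lie in $M_{\psi(t_*),t_*}$.

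The verification of Lemma~\ref{l:M_h_r_section} reduces to the single engine estimate $t_*\psi(t_*)\leq h^{n-1}r^{m-1}/c$. Under condition~(i), it follows from $t\psi(t)$ being non-increasing on $\mathcal I$ combined with $\psi(r^*)\leq(c\Delta_d h)^{-1}$ (condition~(i) evaluated at $t=h$); under condition~(ii) it follows symmetrically from $\psi^-(h^*)\leq(c\Delta_d r)^{-1}$ together with $t\psi(t)$ non-decreasing. From this, the mixed inequality $hr\max(r_1,r_2)\max(h_1,h_2)\leq h^n r^m/c$ is immediate (since $r_i\leq t_*$ and $h_i\leq\psi(t_*)$); the tighter bound $r_1\leq r^*$ combined with $t_*\leq\psi^-(h^*)\leq(c\Delta_d r)^{-1}$ yields $r^2r_1r_2\leq h^n r^m/c$; and squaring the engine estimate, together with the factorisation $(h^{n-1}r^{m-1}/c)^2=(h^n r^{m-2}/c)(h^{n-2}r^m/c)$ and the quadratic bound just obtained, closes $h^2h_1h_2\leq h^n r^m/c$. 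The main difficulty I anticipate is making all three inequalities of Lemma~\ref{l:M_h_r_section} close simultaneously in the tight regime where $t_*$ is very near $r^*$: there the estimate $h_1h_2\leq\psi(t_*)^2$ becomes sharp, and a finer subcase analysis using the full strength of the monotonicity hypothesis on $t\psi(t)$ may be needed to stay within the budget $h^{n-2}r^m/c$.
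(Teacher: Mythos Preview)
Your overall strategy — dichotomy according to whether the easy transference case applies, and in the hard case producing two independent integer points to feed into Lemma~\ref{l:M_h_r_section} — matches the paper's. The gap is in the construction of the pair $(\vec v_1,\vec v_2)$ and, consequently, in the verification of the bound $h^2h_1h_2\leq h^nr^m/c$.

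First, your ``factorisation'' step is logically invalid: from $h_1h_2r_1r_2\leq(t_*\psi(t_*))^2\leq(h^nr^{m-2}/c)(h^{n-2}r^m/c)$ together with $r_1r_2\leq h^nr^{m-2}/c$ one \emph{cannot} deduce $h_1h_2\leq h^{n-2}r^m/c$; that would require a \emph{lower} bound on $r_1r_2$. More to the point, with your choice of points you only have $h_1=h_2=\psi(t_*)\leq 1/(c\Delta_dh)$, so $h_1h_2\leq1/(c\Delta_dh)^2$; this implies $h_1h_2\leq h^{n-2}r^m/c$ only if $c\Delta_d^2\,h^nr^m\geq1$. But when $h^nr^m\geq1$ Minkowski already gives the conclusion, so precisely in the non-trivial regime $h^nr^m<1$ your estimate can fail. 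The hedge you insert at the end (``a finer subcase analysis\ldots may be needed'') is not a matter of bookkeeping: with $\vec v_1,\vec v_2$ both taken from parallelepipeds of the shape $M_{\psi(t),t}$, there is no mechanism forcing $h_1h_2$ to be small enough.

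The paper avoids this by building the aspect ratio $h/r$ into the construction. It takes the minimal $\mu>1$ with $M_{\mu h^*,\mu r^*}\cap\Z^d\neq\{\vec0\}$, classifies the first point as $\vec v_1$ or $\vec v_2$ according to which side of the line $|\vec x|_\infty=(h/r)\max_i|\langle\pmb\ell_{m+i},\cdot\rangle|$ it lies on, and then expands only one side to find the other. The outcome is $\vec v_1\in M_{\lambda_1,(h/r)\lambda_1}$ and $\vec v_2\in M_{(r/h)\lambda_2,\lambda_2}$ with $(h/r)\lambda_1<\lambda_2$ and $(r/h)\lambda_2<\lambda_1$. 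With these ``crossed'' shapes one computes $r^2r_1r_2=h^2h_1h_2=hr\max(r_1,r_2)\max(h_1,h_2)=hr\lambda_1\lambda_2$, so all three conditions of Lemma~\ref{l:M_h_r_section} collapse to the single inequality $\lambda_1\lambda_2\leq h^{n-1}r^{m-1}/c$, which is exactly the engine estimate. Your pair lacks this balance, and that is why the $h^2h_1h_2$ term does not close.
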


  \begin{proof}
    If $M_{h^\ast,r^\ast}$ contains nonzero integer points, then \eqref{eq:M_h_r_contains_points} follows from Theorem \ref{t:my_transference}. Thus, we may assume that
    \begin{equation} \label{eq:M_h_ast_r_ast_is_empty}
      M_{h^\ast,r^\ast}\cap\Z^d\backslash\{\vec 0\}=\varnothing.
    \end{equation}
    Particularly, in this case we have $r^\ast<\psi^-(h^\ast)$.
    
    Consider the minimal $\mu>1$, such that $M_{\mu h^\ast,\mu r^\ast}$ contains a nonzero integer point $\vec v$. If this point satisfies the inequality
    \[ \max_{1\leq j\leq m}|\langle\vec e_j,\vec v\rangle|\leq
       \frac hr
       \max_{1\leq i\leq n}|\langle\pmb\ell_{m+i},\vec v\rangle|, \]
    then we denote it as $\vec v_1$ and consider the minimal $\mu'\geq\mu$, such that for every $\e>0$ small enough $M_{(\mu-\e)h^\ast,\mu'r^\ast}$ contains a nonzero integer point $\vec v_2$. Otherwise, we denote it as $\vec v_2$ and, analogically, consider the minimal $\mu'\geq\mu$, such that for every $\e>0$ small enough $M_{\mu'h^\ast,(\mu-\e)r^\ast}$ contains a nonzero integer point $\vec v_1$. Set
    \begin{equation*} 
      \lambda_1=\max_{1\leq i\leq n}|\langle\pmb\ell_{m+i},\vec v_1\rangle|\quad\text{ and }\quad
      \lambda_2=\max_{1\leq j\leq m}|\langle\vec e_j,\vec v_2\rangle|.
    \end{equation*}
    Then
    \begin{equation*} 
      \max_{1\leq j\leq m}|\langle\vec e_j,\vec v_1\rangle|\leq\frac hr\lambda_1<\lambda_2\quad\text{ and }\quad
      \max_{1\leq i\leq n}|\langle\pmb\ell_{m+i},\vec v_2\rangle|\leq\frac rh\lambda_2<\lambda_1,
    \end{equation*}
    that is
    \begin{equation} \label{eq:v1_v2_bounded_lambdas}
      \vec v_1\in M_{\lambda_1,(h/r)\lambda_1}\subset M_{\lambda_1,\lambda_2}\quad\text{ and }\quad
      \vec v_2\in M_{(r/h)\lambda_2,\lambda_2}\subset M_{\lambda_1,\lambda_2}\,.
    \end{equation}
    In view of \eqref{eq:M_lambda_contains_points} and \eqref{eq:M_h_ast_r_ast_is_empty} we also have
    \begin{equation*} 
      h^\ast<\lambda_1\leq \psi(\lambda_2)<\psi(r^\ast)\quad\text{ and }\quad
      r^\ast<\lambda_2\leq \psi^-(\lambda_1)<\psi^-(h^\ast).
    \end{equation*}
    These inequalities imply that, if the condition $(i)$ of Theorem \ref{t:my_inequalities_f} holds, then by \eqref{eq:r_is_phi_of_h}, \eqref{eq:h_ast_r_ast_definition} and \eqref{eq:phi_psi_hypothesis} we have
    \begin{equation*}
      \lambda_1\lambda_2\leq\lambda_2\psi(\lambda_2)\leq r^\ast\psi(r^\ast)\leq\frac{r^{m-1}h^{n-1}}{\sqrt{2d(d-1)}}\,,
    \end{equation*}
    and if $(ii)$ holds, then by \eqref{eq:r_is_phi_of_h}, \eqref{eq:h_ast_r_ast_definition} and \eqref{eq:phi-_psi-_hypothesis} we have
    \begin{equation*}
      \lambda_1\lambda_2\leq\lambda_1\psi^-(\lambda_1)=\psi^-(\lambda_1)\psi(\psi^-(\lambda_1))\leq h^\ast\psi^-(h^\ast)\leq\frac{r^{m-1}h^{n-1}}{\sqrt{2d(d-1)}}\,.
    \end{equation*}
    Thus, in each case we have
    \begin{equation*} 
      \lambda_1\lambda_2\leq\frac{r^{m-1}h^{n-1}}{\sqrt{2d(d-1)}}\,.
    \end{equation*}
    This, together with \eqref{eq:v1_v2_bounded_lambdas}, implies \eqref{eq:bound_for_the_products}. It remains to apply Lemma \ref{l:M_h_r_section}.
  \end{proof}

  Theorem \ref{t:my_inequalities_f} is now easily derived. It follows from \eqref{eq:phi_hypothesis}, \eqref{eq:r_is_phi_of_h}, \eqref{eq:h_ast_r_ast_definition} that $r^\ast\to\infty$ as $h\to\infty$. So if $h$ is large enough, then either $(i)$, or $(ii)$ of Theorem \ref{t:my_inequalities_f} holds. Besides that, if $h$ is large enough, then by Proposition \ref{prop:definitions_reformulated} we  have  \eqref{eq:M_lambda_contains_points}. Applying Lemma \ref{l:the_alphas_core}, we get Theorem \ref{t:my_inequalities_f}.

  \section{Special case $\bf n+m=3$} \label{sec:3D}

  Jarn\'{\i}k \cite{jarnik_tiflis} derived his Theorem \ref{t:jarnik_equality} from a stronger statement for functions. In our terms it can be reformulated as follows.

  \begin{classic} \label{t:jarnik_f}
    Let $n=1$, $m=2$.
    Let $\psi:\R_+\to\R_+$ be an arbitrary invertible decreasing function, such that $f(t)=t\psi(t)$ is invertible. Let $\e$, $\delta$ be arbitrary positive real numbers. Then the following two statements hold:

    $(i)$ if $\psi(t)\leq\e t^{-2}$ for all $t$ large enough, $f(t)$ is decreasing and $\Theta$ is uniformly $\psi$-approximable, then $\tr\Theta$ is uniformly $\phi$-approximable, where
    \begin{equation} \label{eq:jarnik_3D_12}
      \phi(t)=\frac{12(1+\e+\delta)}{t}\psi^-\left(\frac{1}{t}\right);
    \end{equation}

    $(ii)$ if $\psi(t)\leq\e t^{-1/2}$ for all $t$ large enough, $f(t)$ is increasing and $\tr\Theta$ is uniformly $\psi$-approximable, then $\Theta$ is uniformly $\phi$-approximable, where
    \begin{equation} \label{eq:jarnik_3D_4}
      \phi(t)=\frac{4(1+\e+\delta)}{f^-(t/2)}\,.\ \ \
    \end{equation}
  \end{classic}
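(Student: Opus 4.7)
The plan is to phrase the problem in terms of the parallelepipeds $M_{h,r}$ and $\widehat M_{h,r}$ of Section \ref{sec:d_dimensional} via Proposition \ref{prop:definitions_reformulated} (specialising to $d=3$, $n=1$, $m=2$), and then to run the two-step strategy that drives Lemma \ref{l:the_alphas_core}. For case $(i)$, I would fix a large $h$, set $r=\phi(h)=12(1+\varepsilon+\delta)h^{-1}\psi^-(h^{-1})$ together with the transferred parameters $h^\ast=\Delta_3 r^2$ and $r^\ast=\Delta_3 rh$, and aim to produce a nonzero integer point in $\widehat M_{h,r}$. If $M_{h^\ast,r^\ast}$ already contains one, the refined transference Theorem \ref{t:my_transference} concludes. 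Otherwise, the minimality argument from the proof of Lemma \ref{l:the_alphas_core} yields two linearly independent integer points $\vec v_1\in M_{\lambda_1,(h/r)\lambda_1}$ and $\vec v_2\in M_{(r/h)\lambda_2,\lambda_2}$ with $h^\ast<\lambda_1\le\psi(\lambda_2)$ and $r^\ast<\lambda_2\le\psi^-(\lambda_1)$, the last inequalities coming from uniform $\psi$-approximability of $\Theta$.

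The quantitative core is the product estimate $\lambda_1\lambda_2\le r^{m-1}h^{n-1}/\sqrt{2d(d-1)}=r/\sqrt{12}$, which via Lemma \ref{l:sufficiency_for_M_h_r_section} gives hypothesis \eqref{eq:bound_for_the_products} and, by Lemma \ref{l:M_h_r_section}, the required integer point of $\widehat M_{h,r}$. I would exploit that $f(t)=t\psi(t)$ is decreasing together with $\Delta_3 rh=k\psi^-(h^{-1})$ for $k=12(1+\varepsilon+\delta)\Delta_3$: monotonicity of $f$ directly gives $\psi(\Delta_3 rh)\le\psi(\psi^-(h^{-1}))/k=1/(kh)$, whence
\[ \lambda_1\lambda_2\le f(\lambda_2)\le f(r^\ast)=\Delta_3 rh\cdot\psi(\Delta_3 rh)\le\frac{r}{12(1+\varepsilon+\delta)}\le\frac{r}{\sqrt{12}}\,. \]
The growth hypothesis $\psi(t)\le\varepsilon t^{-2}$ enters not in this inequality but to guarantee that the interval $\mathcal I=[r^\ast,\psi^-(h^\ast)]$ from Lemma \ref{l:the_alphas_core} is non-degenerate and that $r^\ast$ is large enough for uniform $\psi$-approximability of $\Theta$ to supply the required integer points.

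Case $(ii)$ is handled by the symmetry of the setup: it amounts to running case $(i)$ with the roles of $(n,m)$ swapped, applied to $\tr\Theta$ viewed as a $2\times 1$ matrix, and with $M_{h,r}$ and $\widehat M_{h,r}$ exchanged. The increasing monotonicity of $f$ takes the place of the decreasing one (the pivot is now on the other tail of $f$), the binding term in \eqref{eq:bound_for_the_products} shifts from $r^2r_1r_2$ to $h^2h_1h_2$ because of the swapped exponents, and the weaker growth assumption $\psi(t)\le\varepsilon t^{-1/2}$ is exactly what is needed to kill the subdominant terms; the analogous computation then produces the constant $4$ in \eqref{eq:jarnik_3D_4} in place of $12$. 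The main obstacle I anticipate is precisely the bookkeeping of constants: pinning down the factors $12$ and $4$ rather than slightly larger ones requires simultaneously using $c=\sqrt{12}$ from Lemma \ref{l:M_h_r_section}, the bounds $\sqrt{1/3}\le\Delta_3\le\sqrt{2/3}$ from Corollary \ref{cor:1_and_sqrt_2}, and the slack $(1+\varepsilon+\delta)$, which absorbs both the strict inequalities from the minimality argument and the possible looseness of $\psi$ against its envelope and therefore cannot be dropped without additional work.
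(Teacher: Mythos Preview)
The paper does not give a self-contained proof of this statement: Theorem~\ref{t:jarnik_f} is Jarn\'{\i}k's classical result, quoted from \cite{jarnik_tiflis}. What the paper actually does is prove the strictly stronger Theorem~\ref{t:3D} (better constants, no auxiliary $\varepsilon,\delta$, no growth hypothesis on $\psi$) by first sharpening Lemma~\ref{l:M_h_r_section} to Lemma~\ref{l:M_h_r_section_3D} --- replacing $c=\sqrt{12}$ by $c=2$ in dimension~$3$ --- and then feeding this improved constant into the machinery of Theorem~\ref{t:my_inequalities_f}. Theorem~\ref{t:jarnik_f} is then deduced by the one-line comparison at the very end of Section~\ref{sec:3D}.

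Your route runs the same two-step engine (Lemma~\ref{l:the_alphas_core}, resting on Theorem~\ref{t:my_transference} and Lemma~\ref{l:M_h_r_section}), but with the unimproved constant $c=\sqrt{12}$, aiming directly at Jarn\'{\i}k's numbers. For case~$(i)$ this is fine: your chain $\lambda_1\lambda_2\le f(r^\ast)\le r/(12(1+\varepsilon+\delta))\le r/\sqrt{12}$ is correct, the slack $12(1+\varepsilon+\delta)\geq\sqrt{12}$ being more than enough. For case~$(ii)$, however, your appeal to ``symmetry plus the analogous computation'' hides a real gap. After swapping roles, the monotonicity of $f$ alone yields only $\psi^-(u)\le s/(2\Delta_3K)$ (with $u=2\Delta_3K\psi(s)$ and $s=f^-(t/2)$), whereas condition~$(ii)$ of Theorem~\ref{t:my_inequalities_f} with $c=\sqrt{12}$ requires the tighter bound $\psi^-(u)\le s/(\sqrt{12}\,\Delta_3K)$; since $\sqrt{12}>2$, no fixed $K$ bridges this uniformly in $\psi$ from monotonicity alone. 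Either the growth hypothesis $\psi(t)\le\varepsilon t^{-1/2}$ must be used quantitatively here --- not merely to make $r^\ast$ large --- or you must invoke the sharper constant $c=2$ of Lemma~\ref{l:M_h_r_section_3D}, which is exactly the paper's move. As written, your sketch of~$(ii)$ does not account for this, so the claimed constant~$4$ is not yet secured.
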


  It appears that our method gives better constants in \eqref{eq:jarnik_3D_12} and \eqref{eq:jarnik_3D_4}. In order to give in this case the best result we can, let us improve Lemma \ref{l:M_h_r_section} for $d=3$. Namely, let us replace the constant in \eqref{eq:bound_for_the_products}, which is equal in this case to $2\sqrt3$, by $2$.

  \begin{lemma} \label{l:M_h_r_section_3D}
    Let $h$, $r$, $h_1$, $r_1$, $h_2$, $r_2$ be positive real numbers and let $\vec v_1$, $\vec v_2$ be non-collinear points of $\Z^3$.
    Suppose that
    \begin{equation} \label{eq:v1_v2_3D}
      \vec v_1\in M_{h_1,r_1}\,,\qquad\vec v_2\in M_{h_2,r_2}
    \end{equation}
    and
    \begin{equation} \label{eq:bound_for_the_products_3D}
      \max\Big(r^2r_1r_2,\ h^2h_1h_2,\ hr\max(r_1,r_2)\max(h_1,h_2)\Big)\leq
      \frac{1}{2}h^nr^m\,.
    \end{equation}
    Then
    \[ \widehat M_{h,r}\cap\Z^d\backslash\{\vec 0\}\neq\varnothing. \]
  \end{lemma}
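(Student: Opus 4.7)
The plan is to mimic the proof of Lemma \ref{l:M_h_r_section} but exploit the dimension $d=3$ through the cross-product identification $\wedge^2\R^3\cong\R^3$. In this identification the $\ell^\infty$-norm on bivector components (the numbers $V^{i_1i_2}$) equals the $\ell^\infty$-norm of the cross product, and this lets one bypass the Euclidean-versus-$\ell^\infty$ comparison on $\wedge^2\R^3$ that cost a factor $\sqrt{d(d-1)/2}=\sqrt 3$ in the general argument. The overall constant $\sqrt{2d(d-1)}=2\sqrt 3$ thus drops to just $2$, coming from the inequality $|V^{j_1j_2}|\leq 2\cdot(\text{product of coordinates})$.

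Concretely, I would introduce the same operator $A$ and set $\vec z_k=A^{-1}\vec v_k$ as in the proof of Lemma \ref{l:M_h_r_section}, so that $\widehat M_{h,r}=(A^\ast)^{-1}\cB_\infty^3$ and $\det A=(h^nr^m)^{-1}$. Let $\cL=\spanned_\R(\vec v_1,\vec v_2)$ and $\La=\cL\cap\Z^3$. Since $\cL^\perp$ is one-dimensional, Minkowski's convex body theorem in $\cL^\perp$ (together with Proposition \ref{prop:covolumes} giving $\det\La^\perp=\det\La\leq|\vec v_1\wedge\vec v_2|$) reduces the conclusion to showing
\[ \mathrm{length}\bigl(\cL^\perp\cap\widehat M_{h,r}\bigr)\geq 2|\vec v_1\wedge\vec v_2|. \]
The generator of $(A^{-1}\cL)^\perp$ is $\vec u=\vec z_1\times\vec z_2$, and the standard three-dimensional identity $A\vec a\times A\vec b=(\det A)(A^\ast)^{-1}(\vec a\times\vec b)$ gives $(A^\ast)^{-1}\vec u=(\det A)^{-1}\,\vec v_1\times\vec v_2$. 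Since the segment $(A^{-1}\cL)^\perp\cap\cB_\infty^3$ has length exactly $2|\vec u|/|\vec u|_\infty$ and its $(A^\ast)^{-1}$-image is $\cL^\perp\cap\widehat M_{h,r}$, one obtains
\[ \mathrm{length}\bigl(\cL^\perp\cap\widehat M_{h,r}\bigr)=\frac{2|\vec v_1\wedge\vec v_2|\,h^nr^m}{|\vec z_1\times\vec z_2|_\infty}, \]
and the required inequality collapses to the clean condition $|\vec z_1\times\vec z_2|_\infty\leq h^nr^m$.

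It remains to verify this last inequality. Each component of $\vec z_1\times\vec z_2$ equals $\pm V^{ij}=\pm(z_1^{(i)}z_2^{(j)}-z_1^{(j)}z_2^{(i)})$ for some $i<j$, so $|V^{ij}|\leq 2\max\bigl(|z_1^{(i)}z_2^{(j)}|,|z_1^{(j)}z_2^{(i)}|\bigr)$. The three-case split of Lemma \ref{l:cube_section}, combined with the bounds $|\vec x_k|_\infty\leq r\,r_k$ and $|\vec y_k|_\infty\leq h\,h_k$ (forced by $\vec v_k\in M_{h_k,r_k}$), yields
\[ |\vec z_1\times\vec z_2|_\infty\leq 2\max\bigl(r^2r_1r_2,\,h^2h_1h_2,\,hr\max(r_1,r_2)\max(h_1,h_2)\bigr), \]
which is $\leq h^nr^m$ by hypothesis \eqref{eq:bound_for_the_products_3D}. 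The main (in fact essentially only) nontrivial point is realising that Vaaler's generic lower bound $2^{d-2}$ on the $(d-2)$-dimensional section volume, used in the proof of Lemma \ref{l:M_h_r_section}, can be replaced here by the exact chord-length formula $2|\vec u|/|\vec u|_\infty$, and that this exact value cancels perfectly against the cross-product expression for $|\vec v_1\wedge\vec v_2|$, eliminating the loss of $\sqrt 3$.
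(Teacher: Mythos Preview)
Your proof is correct and follows essentially the same route as the paper's. The paper reduces by symmetry to $n=1$, $m=2$ and asserts, via ``elementary geometric considerations'', the inequality
\[
  \frac{|\vec z_1\wedge\vec z_2|}{2^{-1}\vol_1((A^{-1}\cL)^\perp\cap\cB_\infty^3)}\leq 2
  \max\Big(|\vec x_1|_\infty|\vec x_2|_\infty,\
  \max(|\vec x_1|_\infty,|\vec x_2|_\infty)\max(|\vec y_1|_\infty,|\vec y_2|_\infty)\Big),
\]
and then repeats the proof of Lemma~\ref{l:M_h_r_section} with this replacing Lemma~\ref{l:cube_section}. Your cross-product computation makes those ``elementary geometric considerations'' explicit: since the left-hand side above equals exactly $|\vec z_1\times\vec z_2|_\infty$ (via the chord-length formula $2|\vec u|/|\vec u|_\infty$), the displayed inequality is precisely your componentwise bound on $\vec z_1\times\vec z_2$. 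The only cosmetic difference is that the paper, having fixed $n=1$, drops the $|\vec y_1|_\infty|\vec y_2|_\infty$ term (there is no $V^{ij}$ with both indices $>m=2$), whereas you keep the symmetric three-term maximum; this is harmless for the conclusion.
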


  \begin{proof}
    Since the statement of the Lemma is symmetric with respect to $n$, $m$, we may assume that $n=1$ and $m=2$. Let $A$, $\vec z_1$, $\vec z_2$, $\cL$ be as in the proof of Lemma \ref{l:M_h_r_section}. Using elementary geometric considerations it is not difficult to prove that
    \begin{equation} \label{eq:z_wedge_z_leq_max_3D}
      \frac{|\vec z_1\wedge\vec z_2|}{2^{-1}\vol_1((A^{-1}\cL)^\perp\cap\cB_\infty^d)}\leq2
      \max\Big(|\vec x_1|_\infty|\vec x_2|_\infty,\ 
      \max\big(|\vec x_1|_\infty,|\vec x_2|_\infty\big)\max\big(|\vec y_1|_\infty,|\vec y_2|_\infty\big)\Big).
    \end{equation}
    Repeating the argument proving Lemma \ref{l:M_h_r_section} with application of \eqref{eq:z_wedge_z_leq_max_3D} instead of Lemma \ref{l:cube_section}, we get \eqref{eq:max_bounded_by_det_implies}, and thus, the desired statement.
  \end{proof}
  
  We notice that an analogous improvement can be made for arbitrary $n$, $m$, if one of them is equal to $1$.
  
  Now that we have Lemma \ref{l:M_h_r_section_3D}, we can replace the constant $c$ in Theorem \ref{t:my_inequalities_f} by $2$, in case $d=3$. Thus improved, Theorem \ref{t:my_inequalities_f} implies the following

  \begin{theorem} \label{t:3D}
    Let $n=1$, $m=2$.
    Let $\psi:\R_+\to\R_+$ be an arbitrary invertible decreasing function, such that $f(t)=t\psi(t)$ is invertible. Then the following two statements hold:

    $(i)$ if $f(t)$ is decreasing and $\Theta$ is uniformly $\psi$-approximable, then $\tr\Theta$ is uniformly $\phi$-approximable, where
    \begin{equation*} 
      \phi(t)=\frac{3}{4t}\psi^-\left(\frac{2}{3t}\right);
    \end{equation*}

    $(ii)$ if $f(t)$ is increasing and $\tr\Theta$ is uniformly $\psi$-approximable, then $\Theta$ is uniformly $\phi$-approximable, where
    \begin{equation*} 
      \phi(t)=\frac{2}{3f^-(t/2)}\,.\ \ \
    \end{equation*}
  \end{theorem}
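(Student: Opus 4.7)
The strategy is to recognise that Theorem~\ref{t:3D} is precisely the instance $d = 3$ of Theorem~\ref{t:my_inequalities_f}, with the sharper constant $c = 2$ in place of $c = \sqrt{2d(d-1)} = 2\sqrt{3}$. This improvement has already been isolated in Lemma~\ref{l:M_h_r_section_3D}, whose only difference from Lemma~\ref{l:M_h_r_section} is this substitution in the inequality~\eqref{eq:bound_for_the_products}. Since the entire proof of Theorem~\ref{t:my_inequalities_f} invokes $\sqrt{2d(d-1)}$ exactly once---at the very last step of the proof of Lemma~\ref{l:the_alphas_core}, when it appeals to Lemma~\ref{l:M_h_r_section}---the improvement propagates transparently. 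Conditions $(i)$ and $(ii)$ of Theorem~\ref{t:my_inequalities_f} with $d=3$ and $c=2$ become $\psi(\Delta_3 t^n\phi(t)^{m-1}) \leq (2\Delta_3 t)^{-1}$ and $\psi^-(\Delta_3 t^{n-1}\phi(t)^m) \leq (2\Delta_3 \phi(t))^{-1}$ respectively.

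The next step is to evaluate $\Delta_3$ explicitly. By definition~\eqref{eq:Delta_d_definition} this amounts to computing the area of the central section of $\cB_\infty^3$ by the hyperplane $x_1+x_2+x_3=0$; that section is a regular hexagon whose vertices are the six permutations of $(\pm 1,\mp 1,0)$, with circumradius $\sqrt 2$ and area $3\sqrt 3$. Hence $\Delta_3 = 3\sqrt 3/(4\sqrt 3) = 3/4$, so $2\Delta_3 = 3/2$ and $(2\Delta_3)^{-1} = 2/3$, which already accounts for the numerical constants appearing in the statement.

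For part $(i)$ I take $n=1$, $m=2$: the assumption that $f(t) = t\psi(t)$ is decreasing matches hypothesis~$(i)$, and the critical equality $\psi(\Delta_3 t\phi(t)) = 2/(3t)$ is solved by applying the decreasing inverse $\psi^-$, producing $\phi$ up to the explicit rescaling by $\Delta_3$. For part $(ii)$ I apply the improved theorem to $\tr\Theta$ in the role of the primary matrix, i.e.\ with $n=2$, $m=1$: here $f$ increasing matches hypothesis~$(ii)$, and the critical equality $\psi^-(\Delta_3 t\phi(t)) = (2\Delta_3\phi(t))^{-1}$, after multiplying both sides by $(2\Delta_3\phi(t))^{-1}$, rewrites as $f\bigl((2\Delta_3\phi(t))^{-1}\bigr) = t/2$, which inverts to $(2\Delta_3\phi(t))^{-1} = f^-(t/2)$ and gives $\phi(t) = 2/(3f^-(t/2))$.

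The auxiliary divergence condition $t^n\phi(t)^{m-1}\to+\infty$ from Theorem~\ref{t:my_inequalities_f} is immediate in both cases (in $(i)$ it reduces to $\psi^-(s)\to\infty$ as $s\to 0^+$, in $(ii)$ it is simply $t^2\to\infty$), and the monotonicity of $t\psi(t)$ is exactly the hypothesis being assumed. No genuine obstacle arises, since all the sharp geometric content has already been extracted in Lemma~\ref{l:M_h_r_section_3D}; the only point requiring real care is keeping track of which variable plays the role of $t$ and which parameters play the roles of $n$ and $m$ when switching from part $(i)$ to part $(ii)$.
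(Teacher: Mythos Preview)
Your approach coincides with the paper's: it simply remarks that Lemma~\ref{l:M_h_r_section_3D} allows $c=2\sqrt3$ to be replaced by $c=2$ in Theorem~\ref{t:my_inequalities_f} when $d=3$, and asserts that Theorem~\ref{t:3D} follows. Your computation of $\Delta_3=3/4$ and your derivation of part~$(ii)$ are correct.

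One point is worth making explicit rather than leaving as ``up to the explicit rescaling by $\Delta_3$''. In part~$(i)$, solving $\psi\bigl(\Delta_3\,t\,\phi(t)\bigr)=\tfrac{2}{3t}$ gives
\[
  \phi(t)=\frac{1}{\Delta_3\,t}\,\psi^-\!\left(\frac{2}{3t}\right)=\frac{4}{3t}\,\psi^-\!\left(\frac{2}{3t}\right),
\]
with coefficient $\Delta_3^{-1}=4/3$, not the $3/4$ printed in the statement. Since your part~$(ii)$ checks out exactly and the paper's own proof is nothing more than the sentence ``thus improved, Theorem~\ref{t:my_inequalities_f} implies the following'', the $3/4$ appears to be a misprint for $4/3$ (the bound with $4/3$ still beats Jarn\'{\i}k's Theorem~\ref{t:jarnik_f} by the same monotonicity argument the paper gives afterwards). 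This is not a defect in your reasoning, but you should carry the computation through and flag the discrepancy rather than absorb it into a vague phrase.
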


   As we claimed, Theorem \ref{t:3D} is stronger than Theorem \ref{t:jarnik_f}. Indeed, statement $(ii)$ is obviously stronger, and as for statement $(i)$, it suffices to notice that if $f(t)=t\psi(t)$ is decreasing, then $t\psi^-(t)=f(\psi^-(t))$ is increasing, since $\psi^-(t)$ is decreasing, as well as $\psi(t)$.


\vskip 10mm

\noindent
Oleg N. {\sc German} \\
Moscow Lomonosov State University \\
Vorobiovy Gory, GSP--1 \\
119991 Moscow, RUSSIA \\
\emph{E-mail}: {\fontfamily{cmtt}\selectfont german@mech.math.msu.su, german.oleg@gmail.com}

\end{document}